\pgfplotsset{compat=newest}
\newtheorem{theorem}{Theorem}
\newtheorem{lemma}[theorem]{Lemma}
\newtheorem{corollary}[theorem]{Corollary}
\newtheorem{proposition}[theorem]{Proposition}
\newtheorem{remark}[theorem]{Remark}
\newcommand{\estDPG}{\mathrm{est}}
\newcommand{\Tref}{{\widehat T}}
\newcommand{\PiF}{\Pi_{\mathrm{F}}}
\newcommand{\PiFtilde}{\widetilde{\Pi}_{\mathrm{F}}}
\newcommand{\PiFa}{\Pi_{\mathrm{F},\alpha}}
\newcommand{\PiFatilde}{\widetilde{\Pi}_{\mathrm{F},\alpha}}
\newcommand{\PiFptilde}{\widetilde{\Pi}_{\mathrm{F},hp}}
\newcommand{\PiFaptilde}{\widetilde{\Pi}_{\mathrm{F},hp,\alpha}}
\newcommand{\PiFp}{\Pi_{\mathrm{F},hp}}
\newcommand{\PiFap}{\Pi_{\mathrm{F},hp,\alpha}}
\newcommand{\conv}{\operatorname{conv}}
\newcommand{\aalpha}{\boldsymbol{\alpha}}
\newcommand\osc{\mathrm{osc}}
\DeclareMathOperator{\linhull}{span}
\newcommand{\ip}[2]{(#1\hspace*{.5mm},#2)}
\newcommand{\dual}[2]{\langle#1\hspace*{.5mm},#2\rangle}
\newcommand{\norm}[3][]{#1\|#2#1\|_{#3}}
\newcommand{\diam}{\mathrm{diam}}
\newcommand{\di}{\mathrm{d}}
\newcommand\ccurl{{\bf curl\,}}
\renewcommand\div{\operatorname{div}}
\newcommand{\Hdivset}[1]{\boldsymbol{H}(\div;#1)}
\newcommand{\set}[2]{\big\{#1\,:\,#2\big\}}
\newcommand{\RT}{\ensuremath{\boldsymbol{RT}}}
\newcommand{\R}{\ensuremath{\mathbb{R}}}
\newcommand{\N}{\ensuremath{\mathbb{N}}}
\newcommand{\HH}{\ensuremath{{\boldsymbol{H}}}}
\newcommand{\LL}{\ensuremath{\boldsymbol{L}}}
\newcommand{\vv}{\ensuremath{\boldsymbol{v}}}
\newcommand{\TT}{\ensuremath{\mathcal{T}}}
\newcommand{\FF}{\ensuremath{\mathcal{F}}}
\newcommand{\PP}{\ensuremath{\boldsymbol{P}}}
\newcommand{\OO}{\ensuremath{\mathcal{O}}}
\newcommand{\EE}{\ensuremath{\mathcal{E}}}
\newcommand{\normal}{\ensuremath{{\boldsymbol{n}}}}
\newcommand{\tangential}{\ensuremath{{\boldsymbol{t}}}}
\newcommand{\VV}{\ensuremath{\mathcal{V}}}
\newcommand\eeta{\boldsymbol{\eta}}
\newcommand{\ppsi}{{\boldsymbol\psi}}
\newcommand{\ssigma}{{\boldsymbol\sigma}}
\newcommand{\ttau}{{\boldsymbol\tau}}
\newcommand{\qq}{{\boldsymbol{q}}}
\newcommand{\uu}{\boldsymbol{u}}
\newcommand{\tr}{\operatorname{tr}}
\newcommand{\trdiv}{\tr^{\div}}
\newcommand{\trgrad}{\tr^{\nabla}}
\newcommand{\xx}{{\boldsymbol{x}}}
\newcommand{\est}{\operatorname{est}}
\begin{document}

\title{Robust DPG Fortin operators}
\date{\today}

\author{Thomas F\"uhrer}
\author{Norbert Heuer}
\address{Facultad de Matem\'{a}ticas, Pontificia Universidad Cat\'{o}lica de Chile, Santiago, Chile}
\email{\{tofuhrer,nheuer\}@mat.uc.cl}

\thanks{{\bf Acknowledgment.} 
This work was supported by ANID through FONDECYT projects 1210391 and 1230013.}

\keywords{DPG method, Fortin operators, singularly perturbed problems, reaction-diffusion}
\subjclass[2010]{65N30, 
                 65N12 
                 }
\begin{abstract}
At the fully discrete setting, stability of the discontinuous Petrov--Galerkin (DPG) method with optimal test functions
requires local test spaces that ensure the existence of Fortin operators.
We construct such operators for $H^1$ and $\HH(\mathrm{div})$ on simplices in any space dimension
and arbitrary polynomial degree. The resulting test spaces are smaller than previously analyzed cases.
For parameter-dependent norms, we achieve uniform boundedness by the inclusion of exponential layers.
As an example, we consider a canonical DPG setting for reaction-dominated diffusion.
Our test spaces guarantee uniform stability and quasi-optimal convergence of the scheme.
We present numerical experiments that illustrate the loss of stability and error control by the residual for small diffusion coefficient when using
standard polynomial test spaces, whereas we observe uniform stability and error control with our construction.
\end{abstract}
\maketitle


\section{Introduction}\label{sec:intro}
Fortin operators are a critical tool for the stability analysis of mixed finite element schemes, cf.~\cite{BoffiBrezziFortin}.
The discontinuous Petrov--Galerkin (DPG) method with optimal test functions, on the other hand, is a framework that aims at automatic inf-sup stability. In practice, optimal test functions have to be approximated and the question of existence of Fortin operators re-appears. In this case, local (element-wise) operators are sufficient. A first answer was given in~\cite{practicalDPG}, with subsequent studies in~\cite{breakSpace,constrFortin,DemkowiczZanotti20,KLove2}.
In the case of singularly-perturbed problems, uniform discrete stability, or robustness of the method, requires the existence of uniformly bounded Fortin operators. This has been an open problem. 
In this paper, we present local Fortin operators for $H^1$ and $\HH(\mathrm{div})$, on simplices in arbitrary dimension and arbitrary polynomial degree. In contrast to previous results, our constructions are explicit (not needed in applications) and require fewer degrees of freedom. More importantly, we include parameter-dependent exponential layers that guarantee uniform boundedness of our operators for parameter-dependent norms (the $\HH(\mathrm{div})$-case is restricted to two and three space dimensions). We illustrate their application to a DPG method for a reaction-dominated diffusion problem, leading to robustness, i.e., uniform stability, error control, and convergence. In this case, we consider the energy-norm induced by the problem. We have not analyzed the case of balanced norms as proposed in~\cite{DPGrefusion}. This and possible extensions to other singularly-perturbed problems like advection-dominated diffusion are left to future research.

Let us shortly discuss the abstract setting of the DPG method: Consider the variational formulation 
\begin{align*}
  u\in U\colon \qquad b(u,v) = L(v) \quad\forall v\in V, 
\end{align*}
where $U$, $V$ are Hilbert spaces with norms $\norm\cdot{U}$, $\norm{\cdot}V$, $b(\cdot,\cdot)$ is a bounded bilinear form and induces a boundedly invertible operator $B\colon U\to V'$, $u\mapsto b(u,\cdot)$.
Choosing finite dimensional spaces $U_h\subset U$, $V_h\subset V$, the fully discrete DPG method reads:
\begin{align}\label{eq:DPG:practical}
  u_h \in U_h\colon \qquad b(u_h,v) = L(v) \quad\forall v\in \Theta_h(U_h), 
\end{align}
where $\Theta_h\colon U\to V_h$ is defined through ($\ip\cdot\cdot_V$ being the inner product on $V$)
\begin{align*}
  \ip{\Theta_h u}{v_h}_V = b(u,v_h) \quad\forall v_h \in V_h. 
\end{align*}
Well-posedness of discrete DPG is ensured if there exists a Fortin operator $\PiF\colon V\to V_h$ such that
\begin{align}\label{eq:fortin:abstract}
  \norm{\PiF v}V &\leq C_{\PiF} \norm{v}V, \quad
  b(u_h,v-\PiF v) = 0 \quad\forall u_h\in U_h, \, v\in V,
\end{align}
see,~\cite{practicalDPG}. Furthermore, the existence of a Fortin operator also implies quasi-optimality,
\begin{align*}
  \norm{u-u_h}U \leq C C_{\PiF} \min_{w_h\in U_h} \norm{u-w_h}U
\end{align*}
with $C=C_b/c_b$ where $C_b$ and $c_b$ are the boundedness and $\inf$-$\sup$ constants of $b(\cdot,\cdot)$, respectively.
It also plays an important role in the a posteriori error control, see~\cite[Theorem~2.1]{DPGaposteriori}, 
\begin{align*}
  \norm{u-u_h}U \eqsim \norm{Bu_h-L}{V'} + \osc(L), 
\end{align*}
where $\osc(L) = \sup_{0\neq v\in V} L(v-\PiF v)/\norm{v}V$.

One of the main motivations that had driven the development of the DPG method was to derive robust numerical schemes for singularly perturbed problems, see, e.g.,~\cite{DPGconfusion,DPGrefusion}.
All these problems have in common that they naturally lead to parameter dependent trial and test norms. For a concrete example consider the test space $H^1(T)$ (here $T$ as an element of the mesh $\TT$) equipped with the norm
\begin{align*}
  \norm{v}{T,\alpha} = \norm{v}T + \alpha \norm{\nabla v}T \quad\text{for some fixed } \alpha>0. 
\end{align*}
In~\cite{practicalDPG,breakSpace,DemkowiczZanotti20} Fortin operators are constructed (resp. their existence is shown). 
Let us write $\PiF\colon H^1(T)\to P^k(T)$ (a polynomial space). Besides some conditions they satisfy the boundedness estimates
\begin{align*}
  \norm{\PiF v}T \lesssim \norm{v}T + h_T\norm{\nabla v}T, \quad\norm{\nabla \PiF v}T \lesssim \norm{\nabla v}T. 
\end{align*}
Combining the latter two estimates yields
\begin{align*}
  \norm{\PiF v}{T,\alpha} \lesssim \max\{1,\alpha^{-1}h_T\} \norm{v}{T,\alpha}.
\end{align*}
If $h_T\lesssim \alpha$ it is clear that $\norm{\PiF v}{T,\alpha} \lesssim \norm{v}{T,\alpha}$ uniformly. 
However, when $\alpha\lesssim h_T$ --- a case often encountered with singularly perturbed problems --- then we get $\norm{\PiF v}{T,\alpha} \lesssim \alpha^{-1}h_T \norm{v}{T,\alpha}$. This means that, particularly on coarse meshes, the Fortin operator is not uniformly bounded. By our prior considerations, this means that quasi-optimality as well as a posteriori error control is spoiled. 

One of the main objectives of this work is to define discrete spaces $V_h$ and construct corresponding Fortin operators $\PiF\colon V\to V_h$ with boundedness constant $C_{\PiF}\eqsim 1$ for small parameters ($\alpha\lesssim h_T$ in the previous example). 
We do this by first revisiting the construction of Fortin operators for the spaces $H^1(T)$ and $\Hdivset{T}$ in the case $h_T\lesssim \alpha$. 
Contrary to prior works we construct our Fortin operators in an explicit manner. This allows to precisely write down a basis for the discrete test spaces yielding --- compared with the operators from~\cite{practicalDPG,breakSpace,DemkowiczZanotti20} --- smaller dimensions.
The novel idea of definition also requires a different analysis which we present in detail. 
Our constructions are valid for any polynomial degree (this statement will be made precise below) and arbitrary dimension (except for some operators from Section~\ref{sec:fortinDiv}).
However, main advantage is that the definition and construction can be extended to the case $\alpha\lesssim h_T$. 
Specifically, we use modified face bubble functions $\eta_{\alpha,F}$ instead of polynomial face bubble functions $\eta_F$. They are defined in such a way that their volume norm $\norm{\eta_{\alpha,F}}T$ resp. $\norm{\nabla \eta_{\alpha,F}}T$ scales differently than $\norm{\eta_F}T$ resp. $\norm{\nabla \eta_F}T$ depending on the ratio $\alpha/h_T$, though $\eta_{\alpha,F}|_{\partial T} = \eta_F|_{\partial T}$.
The analysis requires some additional tools and steps. 
We also consider low order polynomial cases which allow for even smaller dimensions in the test space. 

As mentioned above, Fortin operators for the DPG method have been constructed in various works: 
The first one was~\cite{practicalDPG}. Other articles that analyze the existence of Fortin operators for second-order PDEs include~\cite{breakSpace,constrFortin,DemkowiczZanotti20}.
The latter references consider all arbitrary fixed polynomial orders. For low order methods with smaller test space dimensions we refer to~\cite{CarstensenGHW14,CarstensenHellwig16}.
For a fourth-order PDE model problem we have shown existence of Fortin operators in~\cite{KLove2}.

The remainder of this work is organized as follows: In Section~\ref{sec:basis} we introduce some notation and define basis functions as well as novel face bubble functions. 
Section~\ref{sec:fortin} and Section~\ref{sec:fortinDiv} discuss the construction of Fortin operators for $H^1$ and $\HH(\div)$, respectively. 
Section~\ref{sec:num} concludes this article with a short description of a DPG method for a singularly perturbed reaction-diffusion problem and numerical examples. 

\section{Preliminaries}\label{sec:basis}
The notation $a\lesssim b$ ($a\gtrsim b$) for $a,b>0$ means that there exists $C>0$ such that $a\leq C\,b$ ($C\,a\geq b$). We write $a\eqsim b$ for $a,b>0$ if $a\lesssim b \lesssim a$.
The generic constant $C$ is independent of involved functions, the diameter of elements, and parameters like $\alpha$ and $\varepsilon$, where present. 

\subsection{Mesh and spaces}
Let $\TT$ denote a shape-regular simplicial mesh of a Lipschitz domain $\Omega$ with $\diam(\Omega)\leq 1$. 
Throughout, $T\in\TT$ is some fixed element, $\Tref$ is the reference element given as the convex hull of the origin and the $n$ coordinate axis vectors. E.g., for $n=2,3$ it reads
\begin{align*}
  \Tref = \begin{cases}
    \conv\{(0,0)^\top,(1,0)^\top,(0,1)^\top\}, & n=2,\\
    \conv\{(0,0,0)^\top,(1,0,0)^\top,(0,1,0)^\top,(0,0,1)^\top\}, & n=3.
  \end{cases} 
\end{align*}
Here, we understand $\conv$ as the interior of the convex hull of a set.

We adopt the standard notation for Lebesgue and Sobolev spaces, $L^2(\omega)$, $\LL^2(\omega) := L^2(\omega;\R^n)$, 
\begin{align*}
  H^1(\omega) = \set{v\in L^2(\omega)}{\nabla v\in \LL^2(\omega)}, 
  \quad
  \Hdivset\omega = \set{\ttau\in \LL^2(\omega)}{\div\ttau\in L^2(\omega)}
\end{align*}
for a Lipschitz domain $\omega\subset \R^n$.
With $\normal_\omega$ we denote the normal vector on the boundary of $\omega$ pointing from $\omega$ to its complement $\R^n\setminus\omega$.
Recall that traces of $H^1(\omega)$ elements are well defined (in the sense of trace operators) and the canonic trace space is $H^{1/2}(\partial\omega)$.
Normal traces of $\Hdivset\omega$ elements are well defined (in a duality sense) and the canonic trace space is $H^{-1/2}(\partial\omega)$. We simply write $\ttau\cdot\normal_\omega$ for the normal trace of $\ttau\in \Hdivset\omega$.

We denote by $\|\cdot\|_\omega$ the canonical $L^2(\omega)$ norm induced by the $L^2(\omega)$ inner product $\ip{\cdot}{\cdot}_\omega$ for a Lipschitz domain $\omega\subset \R^n$. The volume measure of $\omega$ is given by $|\omega|$. The same notation for norm and inner product is used for $L^2(\omega;\R^n)$.
The surface measure of $\gamma\subseteq \partial \omega$ is denoted by $|\gamma|$ and $\|v\|_\gamma$ is the $L^2(\gamma)$ norm induced by the inner product $\dual{\cdot}{\cdot}_\gamma$. We also use the same notation for the duality between $H^{-1/2}(\partial \omega)$ and $H^{1/2}(\partial \omega)$, 
\begin{align*}
  \dual{\phi}{v}_{\partial \omega} \quad\text{for } \phi \in H^{-1/2}(\partial \omega), v\in H^{1/2}(\partial \omega). 
\end{align*}
Recall the following relation between traces of $H^1(\omega)$ and $\Hdivset\omega$, 
\begin{align}\label{eq:dualityrelation}
  \dual{\ttau\cdot\normal_\omega}{v}_{\partial \omega} = \ip{\div\ttau}v_\omega + \ip{\ttau}{\nabla v}_\omega
\end{align}
for all $\ttau\in \Hdivset\omega$, $v\in H^1(\omega)$. Obviously, for sufficiently regular functions this is just the integration by parts formula.

Let $\Pi_T^q\colon L^2(T)\to P^q(T)$ denote the $L^2(T)$ orthogonal projection on $P^q(T)$, the space of polynomials on $T$ of degree less than or equal to $q\in\N_0$. 
For vector-valued polynomials (each component is a polynomial of degree less than or equal to $q$) we use the symbol $\PP^q(T)$.
Recall the first-order approximation property
\begin{align*}
  \|v-\Pi_T^qv\|_T\leq\|v-\Pi_T^0v\|_T \lesssim h_T \|\nabla v\|_T \quad\text{for all }v\in H^1(T).
\end{align*}
An important tool is the following (multiplicative version) of the trace inequality. It can be derived from~\cite[Theorem~1.6.6]{BrennerScott08} with a scaling argument (using the reference element $\Tref$) and the approximation property of $\Pi_T^0$.
\begin{lemma}\label{lem:traceineq}
  For any $v\in H^1(T)$ we have
  \begin{align}\label{eq:traceineq}
    \norm{v-\Pi_T^0v}{\partial T} \lesssim \norm{v-\Pi_T^0v}T^{1/2}\norm{\nabla v}{T}^{1/2} \lesssim h_T^{1/2}\norm{\nabla v}T
  \end{align}
  with hidden constants only depending on the shape of $T$.
\end{lemma}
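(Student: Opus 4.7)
The plan is to reduce to the reference simplex $\Tref$ via an affine pull-back, apply the multiplicative trace inequality there, and use the Poincaré inequality together with the zero-mean property of $w := v - \Pi_T^0 v$ to obtain a homogeneous-looking bound that scales correctly back to $T$.

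First, let $F_T\colon \Tref \to T$ be the affine bijection and set $\hat w := w \circ F_T$. Using shape-regularity and standard affine scalings one has
\begin{align*}
  \norm{w}{T} \eqsim h_T^{n/2}\, \norm{\hat w}{\Tref},\quad
  \norm{\nabla w}{T} \eqsim h_T^{n/2-1}\, \norm{\hat\nabla \hat w}{\Tref},\quad
  \norm{w}{\partial T} \eqsim h_T^{(n-1)/2}\, \norm{\hat w}{\partial \Tref},
\end{align*}
with hidden constants depending only on the shape of $T$.

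On $\Tref$, Theorem~1.6.6 of Brenner--Scott yields the multiplicative trace estimate
\begin{align*}
  \norm{\hat w}{\partial \Tref} \lesssim \norm{\hat w}{\Tref}^{1/2}\bigl(\norm{\hat w}{\Tref}+\norm{\hat\nabla \hat w}{\Tref}\bigr)^{1/2}.
\end{align*}
Here the key observation is that $\Pi_T^0 v$ is the mean value of $v$ on $T$, so $\int_T w \,dx = 0$ and hence $\int_{\Tref} \hat w \,d\hat x = 0$. The Poincaré inequality on the fixed domain $\Tref$ for zero-mean functions gives $\norm{\hat w}{\Tref} \lesssim \norm{\hat\nabla \hat w}{\Tref}$, so the previous line simplifies to
\begin{align*}
  \norm{\hat w}{\partial \Tref} \lesssim \norm{\hat w}{\Tref}^{1/2}\,\norm{\hat\nabla \hat w}{\Tref}^{1/2}.
\end{align*}

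Substituting the scaling identities back in, the powers of $h_T$ combine as $h_T^{(n-1)/2} \eqsim h_T^{n/4}\cdot h_T^{(n-2)/4}\cdot h_T^{\,1/2-(n/2-1)/2\cdots}$; a direct check shows they cancel to $h_T^0$, producing exactly
\begin{align*}
  \norm{w}{\partial T} \lesssim \norm{w}{T}^{1/2}\,\norm{\nabla w}{T}^{1/2}.
\end{align*}
Since $\nabla w = \nabla v$ (because $\Pi_T^0 v$ is constant), this is the first bound in \eqref{eq:traceineq}. The second bound then follows immediately from the first-order approximation property $\norm{v-\Pi_T^0v}{T} \lesssim h_T \norm{\nabla v}{T}$ recalled just before the lemma. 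The only real subtlety is ensuring that no $h_T$-dependence is left over after scaling, which is why invoking Poincaré on the zero-mean function $\hat w$ (rather than the generic trace theorem on $\hat w$) is essential.
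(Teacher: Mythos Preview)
Your proof is correct and follows exactly the route the paper sketches: pull back to the reference simplex, apply the multiplicative trace inequality from Brenner--Scott, use the zero-mean Poincar\'e inequality, scale back, and finish with the approximation property of $\Pi_T^0$. The only cosmetic issue is the garbled display of $h_T$-powers (the trailing ``$\cdots$''); the clean check is simply $h_T^{(n-1)/2} = h_T^{n/4}\cdot h_T^{(n-2)/4}$, so both sides carry the same factor and the bound transfers with a shape-dependent constant only.
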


We denote by $\VV_T$ the set of the $n+1$ vertices of $T$, $\FF_T$ is the set of $n+1$ faces of $T$ and $\VV_F$ denotes the set of $n$ vertices of $F$.
For $z\in \VV_T$ let $F_z\in\FF_T$ be the face opposite to $z$, i.e., $F_z = \conv\big(\VV_T\setminus \{z\}\big)$.
Similarly, for $F\in\FF_T$ let $z_F\in\VV_T$ be the vertex opposite to $F$.
For $F\in\FF_T$ let $P^q(\FF_T)\subset L^2(\partial T)$ denote face-wise polynomials of degree less than or equal to $q\in\N_0$ and $P_c^q(\FF_T):=P^q(\FF_T)\cap C^0(\partial T)$. 
Note that $\normal_T$ is face-wise constant. For the fixed element $T\in\TT$ and any $F\in\FF_T$ we abbreviate $\normal_F = \normal_T|_F$.
For a vertex $z\in\VV_T$ we denote by $\EE_z$ the set of $n$ edges that share the same vertex $z$, i.e., for each $E\in\EE_z$ there is a $z'\in\VV_T\setminus\{z\}$ with $E = \conv\{z,z'\}$. To each $E=\conv\{z,z'\}\in\EE_z$ we associate the (tangential) vector $\tangential_E = z'-z$ (the orientation does not matter for our analysis nor for implementation).

Furthermore, $h_T = \diam(T)$, $h_T \eqsim \diam(F)$ for all $F\in\FF_T$ with hidden constants only depending on the shape of $T$.
Some other relations that we frequently use without further notice are $|T|\eqsim h_T^n$, $|F|\eqsim h_T^{n-1}$, $|\partial T|\eqsim |F|$, $|T|\eqsim |F|h_T$ for any $F\in\FF_T$.

In the following subsections we define special functions that will be used for the construction of the Fortin operators. For ease of reading and reference they are listed, together with their relevant properties, in Table~\ref{tab:basis} below.

\subsection{Low-order basis functions}\label{sec:low}
The functions $\eta_z\in P^1(T)$ are canonical basis functions with $\eta_z(z') = \delta_{z,z'}$ for $z,z'\in \VV_T$ and $\delta_{\cdot,\cdot}$ denoting the Kronecker-$\delta$,
\begin{align*}
  \eta_F = \prod_{z\in\VV_F} \eta_z \in P^{n}(T)
\end{align*}
are the face bubble functions, and $\eta_{T} = \prod_{z\in\VV_T} \eta_{z}\in P^{n+1}(T)$ is the element bubble function.
Clearly, $\linhull\{\eta_z\,:\,z\in \VV_T\}= P^1(T)$.
Alternatively, we may use the following basis for $P^1(T)$: 
First, we abbreviate $d_F = \eta_{z_F}$. Then, for $F\in\FF_T$ we define 
\begin{align*}
  \nu_F = \sum_{z\in\VV_F}\eta_z - (n-1)d_F.
\end{align*}
For space $P^0(\FF_T)$ we use the characteristic functions $\chi_F|_{F'} = \delta_{F,F'}$ for $F'\in \FF_T$ as basis functions. 

\begin{lemma}\label{lem:dualbasis}
  We have 
  \begin{align}\label{eq:basisdual}
    \dual{\nu_F}{\chi_{F'}}_{\partial T} = |F| \delta_{F,F'} \quad\forall F,F'\in\FF_T
  \end{align}
  and
  \begin{align*}
    \linhull\set{\nu_F}{F\in\FF_T} = P^1(T), \quad \linhull\set{\nu_F|_{\partial T}}{F\in\FF_T} = P_c^1(\FF_T).
  \end{align*}
\end{lemma}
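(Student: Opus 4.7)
My plan is to first simplify $\nu_F$ using the partition-of-unity property of the barycentric coordinates, then use this simplified form to evaluate the boundary pairings face by face. Finally, I will deduce both spanning statements from linear independence and a dimension count.

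The key reduction is the observation that $\sum_{z\in\VV_T}\eta_z \equiv 1$ on $T$. Splitting this sum according to $\VV_T = \VV_F \cup \{z_F\}$ and using $d_F = \eta_{z_F}$ gives $\sum_{z\in\VV_F}\eta_z = 1 - d_F$, so
\begin{align*}
  \nu_F = (1-d_F) - (n-1)d_F = 1 - n\,d_F.
\end{align*}
With this, the duality identity~\eqref{eq:basisdual} reduces to evaluating $\int_{F'}(1-n d_F)$ for $F,F'\in\FF_T$.

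For $F'=F$, the vertex $z_F$ is opposite to $F$, so $d_F|_F = \eta_{z_F}|_F = 0$, and $\int_F \nu_F = |F|$. For $F'\neq F$, the vertex $z_F$ lies in $\VV_{F'}$, hence $d_F|_{F'}$ coincides with the barycentric coordinate on the $(n-1)$-simplex $F'$ associated with $z_F$. The standard integration formula for a barycentric coordinate over its simplex gives $\int_{F'} d_F = |F'|/n$, whence
\begin{align*}
  \int_{F'}\nu_F = |F'| - n\cdot\frac{|F'|}{n} = 0.
\end{align*}
This yields~\eqref{eq:basisdual}.

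The two spanning statements follow by dimension counting. Suppose $\sum_{F\in\FF_T} c_F \nu_F = 0$ (equivalently on $\partial T$). Pairing with $\chi_{F'}$ and using~\eqref{eq:basisdual} gives $c_{F'}|F'|=0$, so $c_{F'}=0$ for every $F'\in\FF_T$. Thus $\{\nu_F:F\in\FF_T\}$ is a linearly independent family of $n+1$ elements of $P^1(T)$; since $\dim P^1(T)=n+1$, it is a basis. For the trace statement, each $\nu_F|_{\partial T}$ lies in $P_c^1(\FF_T)$ because $\nu_F\in P^1(T)\subset C^0(T)$. A function in $P_c^1(\FF_T)$ is affine on each face and continuous across shared vertices, so it is determined by its values at the $n+1$ vertices of $T$, giving $\dim P_c^1(\FF_T) = n+1$. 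The same linear independence argument (which only uses the boundary values) then shows $\{\nu_F|_{\partial T}:F\in\FF_T\}$ is a basis of $P_c^1(\FF_T)$.

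I do not anticipate any real obstacle: the only point that requires a small amount of care is recognizing that on a face $F'\neq F$ the function $d_F$ restricts to a barycentric coordinate of $F'$, which is what makes the integral $|F'|/n$ and causes the cancellation. Everything else is algebra and dimension counting.
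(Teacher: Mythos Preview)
Your proof is correct and follows essentially the same approach as the paper's: verify~\eqref{eq:basisdual} by integrating barycentric coordinates over faces, then deduce the spanning statements from the resulting linear independence and a dimension count. Your preliminary rewrite $\nu_F = 1 - n\,d_F$ via the partition of unity is a clean simplification that the paper does not make explicit (it instead computes $\sum_{z\in\VV_F\cap\VV_{F'}}\int_{F'}\eta_z$ directly), but the substance of the argument is the same.
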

\begin{proof}
  Note that~\eqref{eq:basisdual} implies that $\nu_F|_{\partial T}$, $F\in\FF_T$ are linearly independent. This implies the last two assertions because $\dim (P^1(T)) = n+1 = \dim (P_c^1(\FF_T))$. It only remains to prove~\eqref{eq:basisdual}: Let $F\in\FF_T$. From its definition we see that $\nu_F|_F = 1$, thus, $\dual{\nu_F}{\chi_{F}}_{\partial T} = |F|$. Let $F'\in \FF_T\setminus\{F\}$. 
  Using $\int_{F'} \eta_z \,\di x = |F'|n^{-1}$ for $z\in \VV_{F'}$ one verifies that
  \begin{align*}
    \dual{\nu_F}{\chi_{F'}}_{\partial T} &= \sum_{z\in\VV_F\cap\VV_{F'}}\int_{F'} \eta_z\,\di x - (n-1)\int_{F'}d_F \,\di x 
    \\&= (n-1)|F'|n^{-1} - (n-1)|F'|n^{-1} = 0,
  \end{align*}
  finishing the proof. 
\end{proof}
Let $\RT^0(T) =\set{\ppsi\in \LL^2(T)}{\ppsi = \aalpha + \beta \xx, \, \aalpha\in\R^n, \beta\in \R}$ denote the lowest-order Raviart--Thomas space where $\xx\colon T\to\R^n$, $z\mapsto z$.
Let $\ppsi_{F}\in \RT^0(T)$ denote the canonical Raviart--Thomas basis function with
\begin{align*}
  \ppsi_F\cdot\normal_T|_{F'} = \delta_{F,F'} \quad\forall F,F'\in \FF_T
\end{align*}
and $\norm{\ppsi_F}{T} \eqsim |T|^{1/2}$. One verifies the explicit representation $\ppsi_F(z) = \frac{|F|}{n|T|}(z-z_F)$.
Note that by Lemma~\ref{lem:dualbasis} we have that
\begin{align*}
  \dual{\ppsi_F\cdot\normal_T}{\nu_{F'}}_{\partial T} = |F|\delta_{F,F'} \quad\forall F,F'\in\FF_T.
\end{align*}
We also use Bernardi--Raugel elements, see~\cite{BernardiRaugel85},
\begin{align*}
  \eeta_F := \eta_F\normal_F, \quad F\in\FF_T
\end{align*}
for which we get
\begin{align*}
  \dual{\eeta_F\cdot\normal_T}{\nu_{F'}}_{\partial T} = \dual{\eeta_F\cdot\normal_T}{1}_F \delta_{F,F'} 
  \eqsim |F| \delta_{F,F'} \quad\forall F,F'\in\FF_T.
\end{align*}
We also define edge based functions: Fix a vertex $z_*\in\VV_T$ and set $\EE_* = \EE_{z_*}$. 
Clearly, $\tangential_E$ ($E\in\EE_*$) are linearly independent and span $\R^n$. 
Let $\ssigma_E$, ($E\in\EE_*$) denote a basis of $\PP^0(T)$ such that (for any $z\in T$)
\begin{align*}
  \ssigma_E(z)\cdot\tangential_{E'} = \delta_{E,E'} \quad\forall E,E'\in\EE_*.
\end{align*}
We have that $|\ssigma_E(z)|\eqsim 1$ with constants only depending on the shape of $T$. 
With these preparations we define edge functions by $\eta_E = \prod_{z\in\VV_T\cap\overline{E}}\eta_{z}$ and tangential edge functions by
\begin{align*}
  \eeta_E := \eta_E\tangential_E \quad\forall E\in \EE_*.
\end{align*}
These functions play the role of element bubble functions in $\Hdivset{T}$ as can be seen from the next result.
\begin{lemma}\label{lem:rotatedBR}
  We have $\eeta_E\in \PP^{2}(T)$ and 
  \begin{align*}
    \eeta_E\cdot\normal_T|_{\partial T} = 0, \quad
    \ip{\ssigma_E}{\eeta_{E'}}_T = \ip{\ssigma_E}{\eeta_{E}}_T\delta_{E,E'}\eqsim |T|\delta_{E,E'}\quad\forall E,E'\in\EE_*.
  \end{align*}
\end{lemma}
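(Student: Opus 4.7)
The claim $\eeta_E \in \PP^2(T)$ is immediate, since $\eta_E = \eta_{z_*}\eta_{z'}$ (with $E = \conv\{z_*,z'\}$) is a product of two affine barycentric functions and $\tangential_E$ is a constant vector. So I would focus on the two nontrivial assertions.

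For the vanishing of the normal trace, my plan is to check $\eeta_E \cdot \normal_T$ face by face. Every face of $T$ has the form $F_z$ for some $z \in \VV_T$, with $\VV_{F_z} = \VV_T \setminus\{z\}$. I would distinguish two cases: (i) if $z \in \{z_*,z'\}$, then one of the factors $\eta_{z_*}, \eta_{z'}$ is the barycentric function opposite to $F_z$ and so vanishes identically on $F_z$, giving $\eta_E|_{F_z}=0$; (ii) if $z \notin \{z_*,z'\}$, then both endpoints $z_*, z'$ lie on the affine hyperplane of $F_z$, so the edge tangent $\tangential_E = z' - z_*$ is parallel to $F_z$ and hence $\tangential_E \cdot \normal_{F_z} = 0$. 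In either case the product $\eta_E\,\tangential_E \cdot \normal_T$ vanishes on $F_z$, proving $\eeta_E \cdot \normal_T|_{\partial T} = 0$.

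For the inner product identity, I would compute directly:
\begin{align*}
  \ip{\ssigma_E}{\eeta_{E'}}_T = \int_T \eta_{E'}\, \ssigma_E\cdot\tangential_{E'}\,\di x = \delta_{E,E'} \int_T \eta_E\,\di x,
\end{align*}
using the defining dual property $\ssigma_E \cdot \tangential_{E'} = \delta_{E,E'}$ (which holds pointwise since $\ssigma_E$ is constant). Setting $E'=E$ gives $\ip{\ssigma_E}{\eeta_E}_T = \int_T \eta_E\,\di x$, which yields the stated identity. The equivalence $\int_T \eta_E\,\di x \eqsim |T|$ follows by transforming to the reference simplex $\Tref$: $\int_T \eta_E\,\di x = (|T|/|\Tref|)\int_{\Tref}\hat\eta_E\,\di\hat x$, and $\int_{\Tref}\hat\eta_E\,\di\hat x$ is a fixed positive constant depending only on $n$.

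There is no real obstacle here; the only point requiring a moment of thought is the geometric observation in case (ii) above, namely that an edge is tangent to every face containing both of its endpoints, so that $\tangential_E \cdot \normal_{F_z} = 0$ whenever $\eta_E$ does not already force the trace to vanish. Once that is recorded, the remaining computations are one-line applications of the defining properties of $\ssigma_E$ and a standard reference-element scaling.
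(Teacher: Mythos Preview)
Your proof is correct and follows essentially the same approach as the paper's: the face-by-face dichotomy (either $\eta_E$ vanishes on the face, or $\tangential_E$ is tangent to it) is exactly what the paper records, and the inner-product identity and scaling are handled identically. Your write-up is simply more explicit than the paper's terse version.
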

\begin{proof}
  Clearly, $\eta_E\in P^2(T)$, thus, $\eeta_E\in \PP^2(T)$. Note that $\eta_E|_{\partial T}$ is supported on $n-1$ faces, say $F_j$, $j=1,\dots,n-1$. For these faces we also have $\tangential_E \subseteq F_j$ yielding $\tangential_E\cdot\normal_{F_j}=0$. We conclude $\eeta_E\cdot\normal_T|_{\partial T} = 0$.
  The final assertion follows from the definition of $\ssigma_E$ and $\eeta_E$ and scaling arguments.
\end{proof}

\subsection{Higher-order basis functions}\label{sec:ho}
For $F\in\FF_T$ let $\widetilde\chi_{F,j}\in P^p(T)$, $j=1,\dots,\dim(P^p(F))$ be such that $\widetilde\chi_{F,j}|_F$ is a basis of $P^p(F)$ with $\norm{\widetilde\chi_{F,j}}\infty = \norm{\widetilde\chi_{F,j}|_F}\infty \eqsim 1$ and define
\begin{align*}
  \eta_{F,j} &= \eta_F\widetilde\chi_{F,j}, \quad j=1,\ldots,\dim P^p(F), \, F\in\FF_T.
\end{align*}
Let $\chi_{F,j}\in P^p(\FF_T)$, $j=1,\dots,\dim(P^p(F))$, be such that $\chi_{F,j}|_{F'}=0$ for $F'\in\FF_T\setminus\{F\}$ and
\begin{align*}
  \dual{\chi_{F,j}}{\eta_{F,k}}_F = \delta_{j,k}|F|, \quad j,k=1,\ldots,\dim(P^p(F)).
\end{align*}
Let $\widetilde\chi_{T,j}\in P^p(T)$, $j=1,\dots,\dim(P^p(T))$, denote a basis of $P^p(T)$ with $\norm{\widetilde\chi_{T,j}}\infty\eqsim 1$ and define
\begin{align*}
  \eta_{T,j} = \eta_T\widetilde\chi_{T,j}, \quad j=1,\dots,\dim(P^p(T)).
\end{align*}
Furthermore, let $\chi_{T,j}$, $j=1,\dots,\dim(P^p(T))$ be such that
\begin{align*}
  \ip{\chi_{T,j}}{\eta_{T,k}}_T = |T|\delta_{j,k}, \quad j,k = 1,\dots,\dim(P^p(T)).
\end{align*}
By scaling arguments one verifies that $\norm{\chi_{F,j}}\infty\eqsim 1$ and $\norm{\chi_{T,j}}\infty \eqsim 1$. 

Let $\widetilde P^{p}(T)$ denote the orthogonal complement of $P_b^{p}(T) = \set{v\in P^{p}(T)}{v|_{\partial T}=0}$ in $P^{p}(T)$. 
Let $\widetilde\nu_{\partial T,j}$, $j=1,\dots,\dim(\widetilde P^{p+1}(T))$, denote a basis of $\widetilde P^{p+1}(T)$ with $\norm{\widetilde \nu_{\partial T,j}}\infty \eqsim 1$.
Furthermore, let $\nu_{\partial T,j}\in \widetilde P^{p+1}(T)$, $j=1,\dots,\dim(\widetilde P^{p+1}(T))$, denote a basis with 
\begin{align*}
  \dual{\nu_{\partial T,j}}{\widetilde\nu_{\partial T,k}}_{\partial T} = |\partial T| \delta_{j,k} \quad
  j,k = 1,\dots,\dim(\widetilde P^{p+1}(T)).
\end{align*}
One verifies that $\norm{\nu_{\partial T,j}}\infty \eqsim 1$ by scaling arguments. 
Define $\ppsi_{\partial T} = \sum_{F\in\FF_T}\ppsi_F$ and note that $\ppsi_{\partial T}\cdot\normal_T|_{\partial T} = 1$. 
Define 
\begin{align*}
  \ppsi_{\partial T,j} = \ppsi_{\partial T} \widetilde\nu_{\partial T,j} \quad j=1,\dots,\dim(\widetilde P^{p+1}(T)).
\end{align*}
Thus, by our previous considerations, $\dual{\ppsi_{\partial T,j}\cdot\normal_T}{\nu_{\partial T,k}}_{\partial T} = |\partial T| \delta_{j,k}$, $j,k=1,\dots,\dim(\widetilde P^{p+1}(T))$.

We also define higher order edge functions: For $E\in\EE_*$, $j=1,\dots,\dim(P^{p}(T))$, define
\begin{align*}
  \eeta_{E,j} &= \eeta_E\widetilde\chi_{T,j}.
\end{align*}
For $E\in\EE_*$ let $\chi_{E,j}\in P^p(T)$, $j=1,\dots,\dim(P^p(T))$, denote a basis with
\begin{align*}
  \ip{\chi_{E,j}}{\widetilde\chi_{T,k}\eta_{E}}_T = |T|\delta_{j,k} \quad j,k=1,\dots,\dim(P^p(T)).
\end{align*}
One verifies that $\norm{\chi_{E,j}}\infty \eqsim 1$.
Furthermore, define
\begin{align*}
  \ssigma_{E,j} = \ssigma_E \chi_{E,j} \quad j=1,\dots,\dim(P^p(T)), \, E\in\EE_*.
\end{align*}
The proof of the next result follows the arguments given in Lemma~\ref{lem:rotatedBR} together with the aforegoing definitions and is thus omitted.
\begin{lemma}
  We have that $\eeta_{E,j}\cdot\normal_T|_{\partial T} = 0$, $\eeta_{E,j}\in\PP^{p+2}(T)$, and
  \begin{align*}
    \ip{\ssigma_{E,j}}{\eeta_{E',k}}_T =  \delta_{E,E'}\delta_{j,k} |T|
  \end{align*}
  for all $E,E'\in\EE_*$, $j,k=1,\dots,\dim(P^{p}(T))$.
  \qed
\end{lemma}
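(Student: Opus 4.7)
The plan is to reduce each of the three claims to a statement about the lowest-order functions $\eeta_E$ and $\ssigma_E$ from Lemma~\ref{lem:rotatedBR} together with the dual-basis property that defines $\chi_{E,j}$. First I would observe that by construction $\eeta_{E,j} = \widetilde{\chi}_{T,j}\,\eeta_E$; since $\widetilde{\chi}_{T,j}\in P^p(T)$ and $\eeta_E = \eta_E\tangential_E\in\PP^2(T)$, the product lies in $\PP^{p+2}(T)$. The normal-trace identity is then immediate, because $\widetilde{\chi}_{T,j}$ is scalar and $\eeta_E\cdot\normal_T|_{\partial T}=0$ has already been established in Lemma~\ref{lem:rotatedBR}.

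For the biorthogonality I would compute the pointwise dot product
\[
  \ssigma_{E,j}\cdot \eeta_{E',k}
  = \chi_{E,j}\,\widetilde{\chi}_{T,k}\,(\ssigma_E\cdot \tangential_{E'})\,\eta_{E'}
  = \delta_{E,E'}\,\chi_{E,j}\,\widetilde{\chi}_{T,k}\,\eta_E,
\]
using the defining property $\ssigma_E\cdot\tangential_{E'}=\delta_{E,E'}$ from Section~\ref{sec:low}. Integrating over $T$ and invoking the duality $\ip{\chi_{E,j}}{\widetilde{\chi}_{T,k}\eta_E}_T=|T|\delta_{j,k}$ built into the definition of $\chi_{E,j}$ then delivers $\ip{\ssigma_{E,j}}{\eeta_{E',k}}_T = \delta_{E,E'}\delta_{j,k}|T|$, as claimed.

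There is no serious obstacle here: the entire lemma is a bookkeeping exercise in which the vector structure is carried by $\ssigma_E$ and $\tangential_E$ while all scalar polynomial degrees of freedom are packaged into $\chi_{E,j}$ and $\widetilde{\chi}_{T,k}$. The only care needed is to avoid conflating the scalar and vector factors when pulling scalar coefficients out of the $\LL^2(T)$ inner product and to remember that both the $E\neq E'$ cancellation and the biorthogonality with respect to $j,k$ are decoupled in this product structure, which is why no scaling estimate (beyond $\eqsim |T|$, already absorbed in the normalization of $\chi_{E,j}$) is required.
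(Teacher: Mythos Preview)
Your proof is correct and follows exactly the route the paper indicates: it reduces everything to the properties of $\eeta_E$ from Lemma~\ref{lem:rotatedBR} together with the dual-basis definition of $\chi_{E,j}$, which is precisely what the paper means when it says the argument ``follows the arguments given in Lemma~\ref{lem:rotatedBR} together with the aforegoing definitions.'' There is nothing to add.
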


\subsection{Modified face bubble functions}\label{sec:modBubble}
We introduce modified face bubble functions. Before we come to their definition and analysis we state the following result:
\begin{lemma}\label{lem:phiKappa}
  Let $R>0$.
  Consider $R\geq\kappa>0$ and the function $\phi_\kappa\colon [0,1]\to [0,1]$, $t\mapsto e^{-t/\kappa}$. 
  Then, 
  \begin{align*}
    \norm{\phi_\kappa}{L^2(0,1)} \eqsim \kappa^{1/2}, \quad \norm{\phi_\kappa'}{L^2(0,1)} \eqsim \kappa^{-1/2}, 
    \quad \phi_\kappa(0) = 1.
  \end{align*}
  The hidden constants only depend on $R$.
\end{lemma}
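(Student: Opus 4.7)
The proof is a direct computation; the only subtlety is recognizing where the bound $\kappa\le R$ enters.

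The plan is to first compute $\phi_\kappa(0)=e^0=1$ explicitly, which settles the third claim. Then I would directly evaluate
\[
\|\phi_\kappa\|_{L^2(0,1)}^2 = \int_0^1 e^{-2t/\kappa}\,\di t = \frac{\kappa}{2}\bigl(1-e^{-2/\kappa}\bigr)
\]
and, since $\phi_\kappa'(t)=-\kappa^{-1}e^{-t/\kappa}$,
\[
\|\phi_\kappa'\|_{L^2(0,1)}^2 = \frac{1}{\kappa^2}\int_0^1 e^{-2t/\kappa}\,\di t = \frac{1}{2\kappa}\bigl(1-e^{-2/\kappa}\bigr).
\]

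Both claims then reduce to showing that the factor $1-e^{-2/\kappa}$ is bounded above and below by positive constants depending only on $R$. The upper bound is immediate: $1-e^{-2/\kappa}\le 1$. For the lower bound, the hypothesis $\kappa\le R$ gives $2/\kappa\ge 2/R$, hence
\[
1-e^{-2/\kappa} \ge 1-e^{-2/R} > 0,
\]
where the last quantity depends only on $R$. Substituting these two-sided bounds into the formulas above yields $\|\phi_\kappa\|_{L^2(0,1)}\eqsim \kappa^{1/2}$ and $\|\phi_\kappa'\|_{L^2(0,1)}\eqsim \kappa^{-1/2}$, with hidden constants depending only on $R$.

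The only real point worth flagging is that the upper bound $\kappa\le R$ is essential: if $\kappa$ were allowed to grow, $1-e^{-2/\kappa}$ would vanish like $2/\kappa$ and the stated equivalences would fail. No other obstacle arises, so I expect a three- or four-line proof.
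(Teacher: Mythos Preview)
Your proof is correct and is exactly the straightforward calculation the paper alludes to; the paper's own proof simply reads ``The results follow from straightforward calculations.'' Your identification of where the bound $\kappa\le R$ enters (the lower bound on $1-e^{-2/\kappa}$) is the only nontrivial point, and you have it right.
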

\begin{proof}
  The results follow from straightforward calculations.
\end{proof}

Recall that $d_F=\eta_{z_F}$. We can interpret $d_F$ as a relative distance function that is $0$ when restricted to $F$ and $1$ when evaluated at the vertex opposite to $F$. Considering $\phi:= \phi_{\alpha/h_T}$, i.e., 
\begin{align*}
  \phi(t) = e^{-h_Tt/\alpha},
\end{align*}
define for $F\in\FF_T$ the modified face bubble function by
\begin{align}\label{eq:def:modbubble}
  \eta_{\alpha,F} := (\phi\circ d_F)\eta_F.
\end{align}
Some basic properties of this modified function are given in the next result. A visualization of $\eta_{\alpha,F}$ is presented in Figure~\ref{fig:modbubble}.
\begin{figure}
  \begin{center}\includegraphics[width=0.8\textwidth]{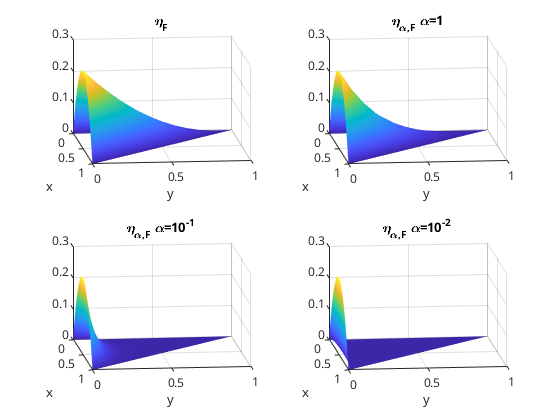}\end{center}
  \caption{Visualization of the face bubble functions $\eta_F$ and $\eta_{\alpha,F}$ on the reference element $\Tref$ and face $F=(0,1)\times\{0\}$.}
  \label{fig:modbubble}
\end{figure}

\begin{lemma}\label{lem:modbubble}
  Suppose that $0<\alpha\lesssim h_T$.
  For any $F\in\FF_T$ we have that
  \begin{align*}
    \norm{\eta_{\alpha,F}}T \lesssim |T|^{1/2} \left(\frac\alpha{h_T}\right)^{1/2}, \quad
    \norm{\nabla \eta_{\alpha,F}}T \lesssim h_T^{-1}|T|^{1/2} \left(\frac\alpha{h_T}\right)^{-1/2}, \quad
    \eta_{\alpha,F}|_{\partial T} = \eta_F|_{\partial T}.
  \end{align*}
\end{lemma}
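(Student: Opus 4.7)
The three assertions essentially decouple. The boundary identity is pointwise: on $F$ we have $d_F = \eta_{z_F} = 0$, so $\phi(d_F) = \phi(0) = 1$ and $\eta_{\alpha,F}|_F = \eta_F|_F$; on any face $F' \in \FF_T\setminus\{F\}$ the opposite vertex $z_{F'}$ lies in $\VV_F$, so $\eta_{z_{F'}}$ is a factor of $\eta_F = \prod_{z\in\VV_F}\eta_z$, forcing $\eta_F|_{F'}=0=\eta_{\alpha,F}|_{F'}$.

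For the two volume estimates, the key is a one-dimensional reduction. Since $d_F$ is the barycentric coordinate opposite $z_F$, its level set $\{d_F=s\}$ is a translate of $(1-s)F$ with $(n-1)$-measure $(1-s)^{n-1}|F|$, and $|\nabla d_F| = h_{z_F}^{-1}\eqsim h_T^{-1}$ is constant. Using $|T|=|F|h_{z_F}/n$ and Fubini along the slab decomposition, I will establish
\begin{align*}
\int_T g(d_F(x))\,\mathrm{d}x = n|T|\int_0^1 g(s)(1-s)^{n-1}\,\mathrm{d}s
\end{align*}
for any bounded measurable $g$. Taking $g=\phi^2$, using $|\eta_F|\leq 1$ and dropping the factor $(1-s)^{n-1}\leq 1$, the first estimate follows at once from Lemma~\ref{lem:phiKappa} with $\kappa = \alpha/h_T$ (admissible by the hypothesis $\alpha\lesssim h_T$), since then $\norm{\phi}{L^2(0,1)}^2\eqsim \alpha/h_T$.

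For the gradient, the product rule gives $\nabla \eta_{\alpha,F} = \phi'(d_F)(\nabla d_F)\eta_F + \phi(d_F)\nabla \eta_F$. The first summand is controlled via the slicing identity applied to $g=(\phi')^2$ together with $\norm{\phi'}{L^2(0,1)}^2\eqsim (\alpha/h_T)^{-1}$ from Lemma~\ref{lem:phiKappa}, yielding a contribution of order $|T|h_T^{-2}(\alpha/h_T)^{-1}$. The second summand uses $\norm{\nabla\eta_F}{L^\infty(T)}\lesssim h_T^{-1}$ (standard scaling to $\Tref$) and the $L^2$-bound on $\phi(d_F)$ just obtained, giving a contribution of order $|T|h_T^{-2}(\alpha/h_T)$, which, thanks to $\alpha/h_T\lesssim 1$, is dominated by the first. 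I do not expect a substantial obstacle beyond bookkeeping: the only points requiring care are correctly matching the parameter $\kappa$ in Lemma~\ref{lem:phiKappa} to $\alpha/h_T$ and tracking the $h_T^{-1}$ coming from $|\nabla d_F|$ through the slicing identity. The assumption $\alpha\lesssim h_T$ itself enters only at the final step, when comparing the two gradient contributions.
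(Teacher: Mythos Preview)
Your proof is correct. The approach is essentially the same as the paper's: both reduce the volume integrals to one-dimensional integrals of $\phi^2$ and $(\phi')^2$ along the barycentric direction $d_F$, then invoke Lemma~\ref{lem:phiKappa} with $\kappa=\alpha/h_T$. The paper carries this out by pulling back to the reference simplex $\Tref$ (where $d_F$ becomes a coordinate axis) and computing explicitly for $n=2$, leaving $n\geq 3$ to analogous calculations; you instead use an intrinsic coarea/slicing identity $\int_T g(d_F)\,\mathrm{d}x = n|T|\int_0^1 g(s)(1-s)^{n-1}\,\mathrm{d}s$ directly on $T$, which has the mild advantage of being dimension-independent from the outset. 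One small correction to your closing remark: the hypothesis $\alpha\lesssim h_T$ is not used \emph{only} when comparing the two gradient contributions---you already (correctly) invoke it earlier to place $\kappa=\alpha/h_T$ in the admissible range of Lemma~\ref{lem:phiKappa}.
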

\begin{proof}
  The identity $\eta_{\alpha,F}|_{\partial T} = \eta_F|_{\partial T}$ follows since $\phi(0) = 1$ and $\eta_F|_{\partial T\setminus F'} = 0$ for $F'\in\FF_T\setminus\{F\}$.

  We show the details for $n=2$. For $n\geq3$ we may argue similarly. 
  Let $A_T\colon \Tref\to T$ denote the affine element mapping.
  W.l.o.g. let $F\in\FF_T$ be the face such $A_T\colon \Tref\to T$ maps $F$ to the edge $(0,1)\times\{0\}$. 
  Then, $d_F\circ A_T(\widehat x,\widehat y) = \widehat y$ for $(\widehat x,\widehat y)\in \Tref$.
  Moreover, 
  \begin{align*}
    \eta_{\alpha,F} \circ A_T(\widehat x,\widehat y) &= \widehat\eta_\alpha (\widehat x,\widehat y) := e^{-\widehat y h_T/\alpha} (1-\widehat x-\widehat y)\widehat x.
  \end{align*}
  The remaining assertions follow by standard calculations, e.g., 
  \begin{align*}
    \norm{\eta_{\alpha,F}}T^2 = 2|T| \norm{\widehat\eta_\alpha}{\Tref}^2 \lesssim \frac\alpha{h_T} |T|.
  \end{align*}
\end{proof}

With the same logic as for the definition of $\eta_{F,j}$ we define 
\begin{align*}
  \eta_{\alpha,F,j} = \eta_{\alpha,F}\widetilde\chi_{F,j}, \quad j=1,\dots,\dim(P^p(F)), \, F\in\FF_T.
\end{align*}
The same proof as for Lemma~\ref{lem:modbubble} shows
\begin{lemma}\label{lem:modbubble:ho}
  Suppose that $\alpha\lesssim h_T$. Then, for any $F,j$ the function $\eta_{\alpha,F,j}$ satisfies the assertions of Lemma~\ref{lem:modbubble} (replacing $\eta_{\alpha,F}$ by $\eta_{\alpha,F,j}$ and $\eta_F$ by $\eta_{F,j}$).
\end{lemma}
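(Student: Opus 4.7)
The plan is to imitate the proof of Lemma~\ref{lem:modbubble} with minor modifications that handle the extra factor $\widetilde\chi_{F,j}$. Since the hypothesis on the basis says $\norm{\widetilde\chi_{F,j}}\infty \eqsim 1$ and $\widetilde\chi_{F,j}$ is a polynomial of fixed degree $p$, all derivatives of $\widetilde\chi_{F,j}$ inherit the corresponding scaling from an inverse estimate, which is all that will be needed.

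First I would dispose of the trace identity. Since $\phi(0)=1$, as noted in the proof of Lemma~\ref{lem:modbubble}, we have $\eta_{\alpha,F}|_{\partial T}=\eta_F|_{\partial T}$. Multiplying by $\widetilde\chi_{F,j}$ on both sides, and using $\eta_{\alpha,F,j}=\eta_{\alpha,F}\widetilde\chi_{F,j}$ respectively $\eta_{F,j}=\eta_F\widetilde\chi_{F,j}$, the trace identity $\eta_{\alpha,F,j}|_{\partial T}=\eta_{F,j}|_{\partial T}$ follows at once.

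For the $L^2$ bound I would transport the computation to $\Tref$ exactly as in Lemma~\ref{lem:modbubble}. Let $A_T\colon \Tref\to T$ be the affine element map, take $n=2$ and, w.l.o.g., assume $A_T$ maps $F$ onto $(0,1)\times\{0\}$. Then
\begin{align*}
  \eta_{\alpha,F,j}\circ A_T(\widehat x,\widehat y)
  = e^{-\widehat y\,h_T/\alpha}(1-\widehat x-\widehat y)\widehat x\,\widehat\chi_{F,j}(\widehat x,\widehat y),
\end{align*}
where $\widehat\chi_{F,j}:=\widetilde\chi_{F,j}\circ A_T\in P^p(\Tref)$ satisfies $\norm{\widehat\chi_{F,j}}{L^\infty(\Tref)}\eqsim 1$. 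Pulling this factor out and applying Lemma~\ref{lem:phiKappa} with $\kappa=\alpha/h_T$ gives $\norm{\eta_{\alpha,F,j}}T^2 \eqsim |T|\,\norm{\widehat\eta_\alpha\widehat\chi_{F,j}}{\Tref}^2 \lesssim |T|\,(\alpha/h_T)$, which is the desired $L^2$ bound. The case $n\ge 3$ is analogous.

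For the gradient bound I would use the product rule on $T$:
\begin{align*}
  \nabla\eta_{\alpha,F,j} = \widetilde\chi_{F,j}\,\nabla\eta_{\alpha,F} + \eta_{\alpha,F}\,\nabla\widetilde\chi_{F,j}.
\end{align*}
The first summand is controlled by $\norm{\widetilde\chi_{F,j}}\infty\,\norm{\nabla\eta_{\alpha,F}}T\lesssim h_T^{-1}|T|^{1/2}(\alpha/h_T)^{-1/2}$ by Lemma~\ref{lem:modbubble}. For the second summand, an inverse estimate on the polynomial $\widetilde\chi_{F,j}$ of fixed degree $p$ (equivalently, a scaling argument on $\Tref$) yields $\norm{\nabla\widetilde\chi_{F,j}}\infty\lesssim h_T^{-1}\norm{\widetilde\chi_{F,j}}\infty\lesssim h_T^{-1}$. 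Combined with the $L^2$ bound for $\eta_{\alpha,F}$ this gives a contribution of order $h_T^{-1}|T|^{1/2}(\alpha/h_T)^{1/2}$, which under the hypothesis $\alpha\lesssim h_T$ is dominated by $h_T^{-1}|T|^{1/2}(\alpha/h_T)^{-1/2}$. The only (minor) obstacle is thus to observe that multiplication by a fixed-degree polynomial with unit-order $L^\infty$ norm costs at most an inverse factor of $h_T$ in the gradient, which on the exponentially decaying factor side is absorbed by the gain $(\alpha/h_T)^{1/2}$ in the $L^2$ norm of $\eta_{\alpha,F}$.
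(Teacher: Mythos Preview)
Your proposal is correct and matches the paper's approach: the paper itself gives no separate proof, merely stating that ``the same proof as for Lemma~\ref{lem:modbubble} shows'' the result, and your argument is precisely a fleshed-out version of that. Your use of the product rule for the gradient bound (splitting into $\widetilde\chi_{F,j}\nabla\eta_{\alpha,F}$ and $\eta_{\alpha,F}\nabla\widetilde\chi_{F,j}$, then invoking the inverse estimate on the fixed-degree polynomial factor) is a clean way to make the ``same proof'' explicit.
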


Define the modified Bernardi--Raugel elements by
\begin{align*}
  \eeta_{\alpha,F} := \eta_{\alpha,F}\normal_F = (\phi\circ d_F)\eeta_F.
\end{align*}
Some important properties of $\eeta_{\alpha,F}$ follow directly from its definition and are summarized in the next result.
Its proof follows the same ideas as the proof of Lemma~\ref{lem:modbubble}.
\begin{lemma}\label{lem:modRB}
  For $F\in\FF_T$ we have $\eeta_{\alpha,F}\cdot\normal_T|_{\partial T} = \eeta_F\cdot\normal_T|_{\partial T}$
  and if $\alpha\lesssim h_T$, then,
  \begin{align*}
    \norm{\eeta_{\alpha,F}}T \lesssim |T|^{1/2} \left(\frac{\alpha}{h_T}\right)^{1/2}, 
    \quad \norm{\div\eeta_{\alpha,F}}{T}\lesssim |T|^{1/2}h_T^{-1} \left(\frac{\alpha}{h_T}\right)^{-1/2}.
  \end{align*}
\end{lemma}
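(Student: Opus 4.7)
The plan is to reduce each of the three claims about the vector-valued object $\eeta_{\alpha,F} = \eta_{\alpha,F}\normal_F = (\phi\circ d_F)\eeta_F$ to corresponding claims about the scalar modified face bubble $\eta_{\alpha,F}$ that were already established in Lemma~\ref{lem:modbubble}. The key observation that makes this almost automatic is that $\normal_F = \normal_T|_F \in \R^n$ is a \emph{constant} unit vector: multiplication by $\normal_F$ commutes with $L^2$-norm evaluation, and applying $\div$ to $\eta_{\alpha,F}\normal_F$ collapses to a single directional derivative.

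First I would verify the boundary identity. Since $\normal_F$ is constant on $T$ (in particular on $\partial T$) and $\eta_{\alpha,F}|_{\partial T} = \eta_F|_{\partial T}$ by Lemma~\ref{lem:modbubble}, one immediately reads off
\begin{align*}
  \eeta_{\alpha,F}\cdot\normal_T|_{\partial T}
   = \bigl(\eta_{\alpha,F}|_{\partial T}\bigr)\,\normal_F\cdot\normal_T|_{\partial T}
   = \bigl(\eta_F|_{\partial T}\bigr)\,\normal_F\cdot\normal_T|_{\partial T}
   = \eeta_F\cdot\normal_T|_{\partial T}.
\end{align*}

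Next, for the $L^2(T)$ bound I would use $|\normal_F|=1$ to obtain the pointwise identity $|\eeta_{\alpha,F}| = |\eta_{\alpha,F}|$, hence $\norm{\eeta_{\alpha,F}}T = \norm{\eta_{\alpha,F}}T$, and then invoke the first estimate of Lemma~\ref{lem:modbubble}. For the divergence bound, the constancy of $\normal_F$ gives the identity $\div\eeta_{\alpha,F} = \nabla\eta_{\alpha,F}\cdot\normal_F$, so $|\div\eeta_{\alpha,F}| \leq |\nabla\eta_{\alpha,F}|$ pointwise, and the second estimate of Lemma~\ref{lem:modbubble} supplies the required bound $\norm{\div\eeta_{\alpha,F}}T \lesssim h_T^{-1}|T|^{1/2}(\alpha/h_T)^{-1/2}$.

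There is no real obstacle here: the statement is essentially a direct corollary of Lemma~\ref{lem:modbubble} once one exploits the constancy of $\normal_F$, which is precisely why the paper remarks that the proof follows the same ideas. The only conceptual check is the divergence-to-gradient reduction, which is immediate from $\div(v\normal_F) = \nabla v \cdot \normal_F$ for constant $\normal_F$.
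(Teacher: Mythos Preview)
Your proposal is correct. The paper does not spell out a proof but simply remarks that it follows the same ideas as Lemma~\ref{lem:modbubble}; your reduction to that lemma via the constancy of $\normal_F$ (yielding $\norm{\eeta_{\alpha,F}}T=\norm{\eta_{\alpha,F}}T$ and $\div\eeta_{\alpha,F}=\nabla\eta_{\alpha,F}\cdot\normal_F$) is exactly the intended shortcut and is, if anything, slightly more economical than repeating the reference-element computation from scratch.
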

We also need higher order variants: Set $\eeta_{\alpha,\partial T} = \sum_{F\in\FF_T}\eeta_{\alpha,F}$ and 
\begin{align*}
  \eeta_{\alpha,\partial T,j} = \eeta_{\alpha,\partial T}\widetilde\nu_{\partial T,j} \quad j=1,\dots,\dim(P_c^{p+1}(\FF_T)).
\end{align*}
Let $\chi_{\partial T,j}\in P_c^{p+1}(\FF_T)$, $j=1,\dots,\dim(P_c^{p+1}(\FF_T))$ denote a basis of $P_c^{p+1}(\FF_T)$ with
\begin{align*}
  \dual{\eeta_{\alpha,\partial T,j}\cdot\normal_T}{\chi_{\partial T,k}}_{\partial T} = |\partial T|\delta_{j,k}, \quad
  j,k = 1,\dots,\dim(P_c^{p+1}(\partial T)).
\end{align*}
The proof of the next result follows the proof of Lemma~\ref{lem:modRB}.
\begin{lemma}\label{lem:modRB:ho}
  The boundedness estimates of Lemma~\ref{lem:modRB} hold with $\eeta_{\alpha,F}$ replaced by $\eeta_{\alpha,\partial T,j}$. 
  \qed
\end{lemma}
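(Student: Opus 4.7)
The plan is to reduce Lemma~\ref{lem:modRB:ho} to the already-established bounds for $\eeta_{\alpha,F}$ in Lemma~\ref{lem:modRB} by using the product structure $\eeta_{\alpha,\partial T,j} = \sum_{F\in\FF_T} \eeta_{\alpha,F}\,\widetilde\nu_{\partial T,j}$ and handling the extra polynomial factor $\widetilde\nu_{\partial T,j}\in \widetilde P^{p+1}(T)$ via its uniform sup-norm bound plus a standard polynomial inverse estimate.

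First I would bound the $L^2$ norm. Since $\|\widetilde\nu_{\partial T,j}\|_\infty \eqsim 1$, the triangle inequality gives
\begin{align*}
  \norm{\eeta_{\alpha,\partial T,j}}{T} \leq \norm{\widetilde\nu_{\partial T,j}}{\infty} \sum_{F\in\FF_T}\norm{\eeta_{\alpha,F}}{T} \lesssim |T|^{1/2}(\alpha/h_T)^{1/2},
\end{align*}
using the first estimate of Lemma~\ref{lem:modRB} and the fact that $\#\FF_T = n+1$ depends only on the dimension.

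Next I would bound $\|\div \eeta_{\alpha,\partial T,j}\|_T$. Applying the product rule face by face,
\begin{align*}
  \div \eeta_{\alpha,\partial T,j} = \sum_{F\in\FF_T}\Bigl(\widetilde\nu_{\partial T,j}\,\div\eeta_{\alpha,F} + \eeta_{\alpha,F}\cdot\nabla\widetilde\nu_{\partial T,j}\Bigr).
\end{align*}
The first summand is controlled by $\|\widetilde\nu_{\partial T,j}\|_\infty\|\div\eeta_{\alpha,F}\|_T$, which by Lemma~\ref{lem:modRB} is $\lesssim |T|^{1/2}h_T^{-1}(\alpha/h_T)^{-1/2}$. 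For the second summand, a standard inverse estimate on the reference element (scaled to $T$) for the polynomial $\widetilde\nu_{\partial T,j}$ yields $\|\nabla\widetilde\nu_{\partial T,j}\|_\infty \lesssim h_T^{-1}$ with hidden constant depending on $p$ and the shape of $T$. Combined with the first bound of Lemma~\ref{lem:modRB}, this gives
\begin{align*}
  \norm{\eeta_{\alpha,F}\cdot\nabla \widetilde\nu_{\partial T,j}}{T} \lesssim h_T^{-1}|T|^{1/2}(\alpha/h_T)^{1/2}.
\end{align*}
Under the assumption $\alpha \lesssim h_T$ we have $(\alpha/h_T)^{1/2}\leq (\alpha/h_T)^{-1/2}$, so this term is absorbed into the required bound $|T|^{1/2}h_T^{-1}(\alpha/h_T)^{-1/2}$, completing the proof.

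No step is really an obstacle; the only subtlety is that the gradient of the polynomial factor produces an extra $(\alpha/h_T)^{1/2}$ rather than $(\alpha/h_T)^{-1/2}$, and one must observe that the smallness of $\alpha/h_T$ makes this contribution harmless. This is why the hypothesis $\alpha \lesssim h_T$ (inherited from Lemma~\ref{lem:modRB}) is essential in the argument.
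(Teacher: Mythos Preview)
Your proof is correct. The paper does not spell out a proof of Lemma~\ref{lem:modRB:ho}; it merely indicates that one proceeds as in Lemma~\ref{lem:modRB}, which itself points back to the direct reference-element computation of Lemma~\ref{lem:modbubble}. Your route is slightly different and more modular: instead of redoing the explicit integral estimates on $\Tref$ with the polynomial factor $\widetilde\nu_{\partial T,j}$ inserted from the outset, you treat Lemma~\ref{lem:modRB} as a black box and control the extra factor via the product rule, the normalization $\|\widetilde\nu_{\partial T,j}\|_\infty\eqsim 1$, and the standard polynomial inverse estimate $\|\nabla\widetilde\nu_{\partial T,j}\|_\infty\lesssim h_T^{-1}$. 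Both approaches are equally valid; the paper's implied direct computation avoids invoking an inverse estimate but repeats the exponential-weight integrals, whereas your decomposition recycles the existing bounds at the cost of one additional standard ingredient and the observation that $(\alpha/h_T)^{1/2}\lesssim(\alpha/h_T)^{-1/2}$ under the hypothesis $\alpha\lesssim h_T$.
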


To close this section and to have a better overview we summarize the most important basis functions used in the remainder of this work in Table~\ref{tab:basis}.
\begin{table}[htbp]
\centering
\begin{tabular}{|c|c|c|c|c|c|}\hline
 & function & space & basis & definition & property  \\ \hline\hline
 \multirow{ 6}{*}{\rotatebox[origin=c]{90}{low order}} & $\chi_F$ & $P^0(\FF_T)$ & yes & $\chi_F|_{F'}=\delta_{F,F'}$ & -- \\ \cline{2-6} 
 & $\nu_F$ & $P^1(T)$ & yes & $\sum_{z\in\VV_F}\eta_z-(n-1)d_F$ & $\dual{\nu_F}{\chi_{F'}}_{\partial T} = |F|\delta_{F,F'}$ \\ \cline{2-6}
 & $\ppsi_F$ & $\RT^0(T)$ & yes & $\ppsi_F\cdot\normal_T|_{\partial T} = \chi_F$ & $\dual{\ppsi_F\cdot\normal_T}{\nu_{F'}}_{\partial T} = |F|\delta_{F,F'}$ \\ \cline{2-6}
 & $\eeta_F$ & $\PP^p(T)$ & no & $\eta_F\normal_F$ & $\dual{\eeta_F\cdot\normal_T}{\nu_{F'}}_{\partial T} = \dual{\eta_F}1_F\delta_{F,F'}$ \\ \cline{2-6}
& $\ssigma_E$ & $\PP^0(T)$ & yes &-- & $\ssigma_E\cdot\tangential_{E'} = \delta_{E,E'}$  \\ \cline{2-6}
& $\eeta_E$ & $\PP^2(T)$ & no & $\eta_E\tangential_E$ & $\ip{\ssigma_E}{\eeta_{E'}}_T = \ip{\ssigma_E}{\eeta_{E}}_T\delta_{E,E'}$  \\
\hline\hline
\multirow{ 10}{*}{\rotatebox[origin=c]{90}{higher order}} & $\chi_{T,j}$, $\widetilde\chi_{T,j}$ & $P^p(T)$ & yes & -- & -- \\ \cline{2-6} 
& $\eta_{T,j}$ & $P^{p+n+1}(T)$ & no & $\eta_{T}\widetilde\chi_{T,j}$ & $\ip{\eta_{T,j}}{\chi_{T,k}}_T=|T|\delta_{j,k}$ \\ \cline{2-6}
& $\widetilde\chi_{F,j}$ & $P^p(T)$ & yes & -- & -- \\ \cline{2-6}
& $\chi_{F,j}$ & $P^p(\FF_T)$ & yes & -- & -- \\ \cline{2-6}
& $\eta_{F,j}$ & $P^{p+n}(T)$ & no & $\eta_F\widetilde\chi_{F,j}$ & $\dual{\eta_{F,j}}{\chi_{F',k}}_{\partial T} = \delta_{j,k}\delta_{F,F'}|F|$  \\ \cline{2-6}
& $\nu_{\partial T,j}$, $\widetilde\nu_{\partial T,j}$ & $\widetilde P^{p+1}(T)$ & yes &--  & $\dual{\nu_{\partial T,j}}{\widetilde\nu_{\partial T,k}}_{\partial T} = |\partial T| \delta_{j,k}$ \\  \cline{2-6}
& $\ppsi_{\partial T,j}$ & $\PP^{p+2}(T)$ & no & $\widetilde\nu_{\partial T,j}\sum_{F\in\FF_T}\ppsi_F$ & $\dual{\ppsi_{\partial T,j}\cdot\normal_T}{\nu_{\partial T,k}}_{\partial T} = \delta_{j,k}|\partial T|$  \\  \cline{2-6}
& $\chi_{E,j}$ & $P^p(T)$ & yes & -- & $\ip{\chi_{E,j}}{\widetilde\chi_{T,k}\eta_E}_{T} = \delta_{j,k}|T|$  \\ \cline{2-6} 
& $\ssigma_{E,j}$ & $\PP^p(T)$ & yes & $\ssigma_E\chi_{E,j}$ & --  \\ \cline{2-6}
& $\eeta_{E,j}$ & $\PP^{p+2}(T)$ & no & $\eeta_E\widetilde\chi_{T,j}$ & $\ip{\ssigma_{E,j}}{\eeta_{E',k}}_{T} = \delta_{j,k}\delta_{E,E'}|T|$  \\ 
\hline\hline
\multirow{ 6}{*}{\rotatebox[origin=c]{90}{modified}} & $\phi$ & -- & -- & $t\mapsto e^{-h_T/\alpha t}$ & -- \\ \cline{2-6} 
& $\eta_{\alpha,F}$ & -- & -- & $(\phi\circ d_F)\eta_F$ & $\eta_{\alpha,F}|_{\partial T} = \eta_F|_{\partial T}$ \\ \cline{2-6}
& $\eta_{\alpha,F,j}$ & -- & -- & $\eta_{\alpha,F}\widetilde\chi_{F,j}$ & $\eta_{\alpha,F,j}|_{\partial T} = \eta_{F,j}|_{\partial T}$ \\ \cline{2-6}
& $\eeta_{\alpha,F}$ & -- & -- & $\eta_{\alpha,F}\normal_F$ & $\eeta_{\alpha,F}\cdot\normal_T|_{\partial T} = \eeta_{F}\cdot\normal_T|_{\partial T}$ \\ \cline{2-6}
& $\chi_{\partial T,j}$ & $P^{p+1}_c(\FF_T)$ & yes & -- & --  \\ \cline{2-6}
& $\eeta_{\alpha,\partial T,j}$ & -- & -- & $\eeta_{\alpha,F}\widetilde\nu_{\partial T,j}$ & $\dual{\eeta_{\alpha,F,j}\cdot\normal_T}{\chi_{\partial T,k}}_{\partial T} = |\partial T|\delta_{j,k}$  \\
\hline
\end{tabular}
\caption{Overview of special functions together with some of their main properties. 
  Here, $\eta_z$ is the canonical Lagrange basis function of $P^1(T)$, $\eta_E$, $\eta_F$, $\eta_T$ are edge, face, and element bubble functions, respectively, and $d_F = \eta_{z_F}$. In column \textit{basis} we indicate whether the respective family of functions generates the indicated space. 
For a detailed description we refer to Sections~\ref{sec:low}--\ref{sec:modBubble}.}
\label{tab:basis}
\end{table}

\section{Fortin operator in $H^1(T)$}\label{sec:fortin}
\noindent
We consider a fixed parameter $\alpha>0$ and space $H^1(T)$ equipped with the (squared) norm
\begin{align*}
  \norm{v}{T,\alpha}^2 := \norm{v}{T}^2 + \alpha^2 \norm{\nabla v}T^2.
\end{align*}
The idea of this section is to construct Fortin operators, say $\PiF^\nabla\colon H^1(T)\to V_h^\nabla$ (with $V_h^\nabla\subset H^1(T)$ being some finite-dimensional subspace) such that, for a fixed $p\in\N_0$ and for all $v\in H^1(T)$,
\begin{subequations}\label{eq:fortinprop}
\begin{align}\label{eq:fortinprop:bound}
  \norm{\PiF^\nabla v}{T,\alpha} &\leq C_\mathrm{F} \norm{v}{T,\alpha}, \\
  \label{eq:fortinprop:a}
  \dual{\sigma}{v-\PiF^\nabla v}_{\partial T} &= 0 \quad\forall \sigma\in P^p(\FF_T), \\
  \label{eq:fortinprop:b}
  \ip{u}{v-\PiF^\nabla v}_T &= 0 \quad\forall u\in P^p(T) 
\end{align}
with $C_\mathrm{F}>0$ independent of $\alpha$, $h_T$ (but possibly dependent on $p$).
Note that~\eqref{eq:fortinprop:a}--\eqref{eq:fortinprop:b} imply
\begin{align}\label{eq:fortinprop:c}
  \ip{\ssigma}{\nabla(1-\PiF^\nabla)v}_T = 0 \quad\forall \ssigma\in\PP^{p}(T).
\end{align}
\end{subequations}
This can be seen from integration by parts: Take $\ssigma\in \PP^p(T)$, $v\in H^1(T)$. Then, $\div\ssigma\in P^{p-1}(T)$ and
\begin{align*}
  \ip{\ssigma}{\nabla \PiF^\nabla v}_T &= -\ip{\div\ssigma}{\PiF^\nabla v}_T + \dual{\ssigma\cdot\normal_T}{\PiF^\nabla v}_{\partial T}
  \\
  &= -\ip{\div\ssigma}{v}_T + \dual{\ssigma\cdot\normal_T}{v}_{\partial T} = \ip{\ssigma}{\nabla v}_T.
\end{align*}
From the last identities we also see that the weaker condition
\begin{align}\tag{\ref{eq:fortinprop:b}'}\label{eq:fortinprop:b:alt}
  \ip{u}{v-\PiF^\nabla v}_T &= 0 \quad\forall u\in P^{p-1}(T)
\end{align}
would be sufficient to conclude~\eqref{eq:fortinprop:c}. 
However, depending on the problem, condition~\eqref{eq:fortinprop:b} is needed, e.g., in the presence of reaction terms as in the DPG method in Section~\ref{sec:num}.
We stress that our Fortin operator can be easily modified to satisfy~\eqref{eq:fortinprop:b:alt} only.

\subsection{Constructions for moderate parameter}\label{sec:grad:moderate}
Define the space
\begin{align*}
  \widetilde V_{hp}^\nabla = P^0(T) + \linhull\set{\eta_{F,j}}{j=1,\dots,\dim(P^p(F)), F\in\FF_T}
\end{align*}
and operator $\PiFptilde^\nabla \colon H^1(T)\to \widetilde V_{hp}^\nabla$ for $v\in H^1(T)$ by
\begin{align*}
  \PiFptilde^\nabla v = \Pi_T^0 v + \sum_{F\in\FF_T} \sum_{j=1}^{\dim(P^p(F))} \frac{\dual{\chi_{F,j}}{(1-\Pi_T^0)v}_F}{\dual{\chi_{F,j}}{\eta_{F,j}}_F} \eta_{F,j}.
\end{align*}
  
The following result collects its main properties.
\begin{lemma}\label{lem:fortin:normal}
  Operator $\PiF^\nabla=\PiFptilde^\nabla$ is idempotent on $P^0(T)$, satisfies property~\eqref{eq:fortinprop:a} and
  \begin{align*}
    \norm{\PiFptilde^\nabla v}T &\lesssim \norm{v}T + h_T\norm{\nabla v}T, \quad
    \norm{\nabla \PiFptilde^\nabla v}T \lesssim \norm{\nabla v}T, 
    \quad\norm{(1-\PiFptilde^\nabla)v}T \lesssim h_T\norm{\nabla v}T
  \end{align*}
  for all $v\in H^1(T)$.
\end{lemma}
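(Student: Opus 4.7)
The plan is to verify the four claims by working directly from the explicit formula for $\PiFptilde^\nabla$, exploiting the biorthogonality $\dual{\chi_{F,j}}{\eta_{F',k}}_F = \delta_{F,F'}\delta_{j,k}|F|$ (valid because $\chi_{F,j}$ is supported on $F$ and $\eta_{F',k}|_F = 0$ for $F'\neq F$) together with the trace inequality from Lemma~\ref{lem:traceineq}.

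For idempotency on $P^0(T)$, if $v$ is constant then $(1-\Pi_T^0)v = 0$, so every coefficient in the correction sum vanishes and $\PiFptilde^\nabla v = \Pi_T^0 v = v$. For property~\eqref{eq:fortinprop:a}, since $\{\chi_{F,j}\}$ generates $P^p(\FF_T)$ it suffices to test against a single $\chi_{F,j}$. By face-support of the basis and the biorthogonality relation, all cross-terms in $\dual{\chi_{F,j}}{\PiFptilde^\nabla v}_{\partial T}$ collapse except the $(F,j)$-term, giving exactly $\dual{\chi_{F,j}}{\Pi_T^0 v}_F + \dual{\chi_{F,j}}{(1-\Pi_T^0)v}_F = \dual{\chi_{F,j}}{v}_{\partial T}$.

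For the three norm bounds I set $a_{F,j} := \dual{\chi_{F,j}}{(1-\Pi_T^0)v}_F / |F|$. Cauchy--Schwarz with $\norm{\chi_{F,j}}\infty\eqsim 1$ followed by Lemma~\ref{lem:traceineq} gives
\begin{align*}
  |a_{F,j}| \lesssim |F|^{-1/2}\,\norm{(1-\Pi_T^0)v}{\partial T} \lesssim |F|^{-1/2} h_T^{1/2}\norm{\nabla v}T.
\end{align*}
Combined with the scaling $\norm{\eta_{F,j}}T \lesssim |T|^{1/2}$ and $|T|/|F|\eqsim h_T$, this yields $\norm{a_{F,j}\eta_{F,j}}T \lesssim h_T \norm{\nabla v}T$. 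Summing over $F$ and $j$ and using $\norm{\Pi_T^0 v}T\le \norm{v}T$ proves the first stated bound, while the same estimate applied to the difference $(1-\PiFptilde^\nabla)v = (1-\Pi_T^0)v - \sum_{F,j} a_{F,j}\eta_{F,j}$, together with the standard first-order approximation $\norm{(1-\Pi_T^0)v}T \lesssim h_T\norm{\nabla v}T$, yields the third bound.

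For the gradient bound, $\nabla\Pi_T^0 v = 0$ eliminates the leading term. A polynomial inverse estimate on the reference element, transported by the affine mapping, gives $\norm{\nabla \eta_{F,j}}T \lesssim h_T^{-1}|T|^{1/2}$. Multiplying by the coefficient bound above produces $|a_{F,j}|\,\norm{\nabla\eta_{F,j}}T \lesssim \norm{\nabla v}T$ (the factor $h_T^{1/2}\cdot h_T^{-1}\cdot(|T|/|F|)^{1/2}$ is $\eqsim 1$), and summing over the finitely many $(F,j)$ gives $\norm{\nabla\PiFptilde^\nabla v}T \lesssim \norm{\nabla v}T$. The main obstacle is essentially bookkeeping: tracking the correct $h_T$, $|F|$, $|T|$ powers so that all three estimates come out with the stated scaling; this is where Lemma~\ref{lem:traceineq} is used crucially, since a plain Cauchy--Schwarz on $\partial T$ would give a suboptimal power of $h_T$ and spoil the $h_T$-factor in front of $\norm{\nabla v}T$ in the first and third estimates.
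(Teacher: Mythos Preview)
Your proof is correct and follows essentially the same approach as the paper: idempotency from $(1-\Pi_T^0)v=0$, property~\eqref{eq:fortinprop:a} via the biorthogonality $\dual{\chi_{F,j}}{\eta_{F',k}}_{\partial T}=\delta_{F,F'}\delta_{j,k}|F|$, and the norm bounds via Cauchy--Schwarz, the trace inequality of Lemma~\ref{lem:traceineq}, scaling $\norm{\eta_{F,j}}T\eqsim|T|^{1/2}$, and the inverse estimate $\norm{\nabla\eta_{F,j}}T\lesssim h_T^{-1}|T|^{1/2}$. The only cosmetic difference is that for the approximation estimate the paper first invokes idempotency to write $(1-\PiFptilde^\nabla)v=(1-\PiFptilde^\nabla)(v-\Pi_T^0 v)$ and then applies the already-established $L^2$ bound, whereas you decompose $(1-\PiFptilde^\nabla)v=(1-\Pi_T^0)v-\sum a_{F,j}\eta_{F,j}$ directly; both are equivalent and one-line computations.
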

\begin{proof}
  Idempotency can be seen from the definition, since $v\in P^0(T)$ implies that $\Pi_T^0 v =v$, thus, $(1-\Pi_T^0)v = 0$.

  Let $v\in H^1(T)$. To see~\eqref{eq:fortinprop:a} we employ the orthogonality property $\dual{\eta_{F,j}}{\chi_{F',k}}_{\partial T} = \delta_{F,F'}\delta_{j,k}|F|$ to get 
  \begin{align*}
    \dual{\chi_{F',k}}{v-\PiFtilde^\nabla v}_{\partial T} &= \dual{\chi_{F',k}}{v-\Pi_T^0v}_{\partial T} - \sum_{F\in\FF_T} \sum_{j=1}^{\dim(P^p(F))} 
    \frac{\dual{\chi_{F,j}}{(1-\Pi_T^0)v}_F}{\dual{\chi_{F,j}}{\eta_{F,j}}_F}\dual{\chi_{F',k}}{\eta_{F,j}}_{\partial T}
    \\
    &= \dual{\chi_{F',k}}{v-\Pi_T^0v}_{\partial T} - \dual{\chi_{F',k}}{(1-\Pi_T^0)v}_{\partial T} = 0
  \end{align*}
  Since $F'$ and $k$ were arbitrary, condition~\eqref{eq:fortinprop:a} follows. 

  The boundedness follows from the triangle inequality, the Cauchy--Schwarz inequality and boundedness of $\Pi_T^0$, i.e.,
  \begin{align*}
    \norm{\PiFptilde^\nabla v}{T} \leq \norm{v}T + \sum_{F\in\FF_T} \sum_{j=1}^{\dim(P^p(T))}\frac{\norm{\chi_{F,j}}F\norm{v-\Pi_T^0 v}F}{\dual{\chi_{F,j}}{\eta_{F,j}}_F} \norm{\eta_{F,j}}T.
  \end{align*}
  Note that $\dual{\chi_{F,j}}{\eta_{F,j}}_F = |F|$, $\norm{\eta_{F,j}}T \eqsim |T|^{1/2}$, $\norm{\chi_{F,j}}F\eqsim |F|^{1/2}$ which follows by standard scaling arguments and the properties of the basis functions discussed in Section~\ref{sec:basis}. 
  Applying the trace inequality~\eqref{eq:traceineq} we see that
  \begin{align*}
    \frac{\norm{\chi_{F,j}}F\norm{v-\Pi_T^0 v}F}{\dual{\chi_{F,j}}{\eta_{F,j}}_F} \norm{\eta_{F,j}}T
    &\eqsim |F|^{-1/2}|T|^{1/2} \norm{v-\Pi_T^0 v}{F} \lesssim h_T^{1/2} h_T^{1/2} \norm{\nabla v}T.
  \end{align*}
  Thus, we conclude that $\norm{\PiFptilde^\nabla v}T \lesssim \norm{v}T + h_T\norm{\nabla v}T$. 
  Then, with similar arguments but using the inverse estimate $\norm{\nabla \eta_{F,j}}T \lesssim h_T^{-1}|T|^{1/2}$, we see that ($\nabla \Pi_T^0 v=0$)
  \begin{align*}
    \norm{\nabla \PiFptilde^\nabla v}T \leq \sum_{F\in\FF_T} \sum_{j=1}^{\dim(P^p(F))} \frac{\norm{\chi_{F,j}}F\norm{v-\Pi_T^0 v}F}{\dual{\chi_{F,j}}{\eta_{F,j}}_F} \norm{\nabla \eta_{F,j}}T \lesssim \norm{\nabla v}T.
  \end{align*}
  Finally, the approximation property is derived by using the idempotency, and the established boundedness estimates, i.e., 
  \begin{align*}
    \norm{(1-\PiFptilde^\nabla)v}T = \norm{(1-\PiFptilde^\nabla)(v-\Pi_T^0v)}T \lesssim \norm{v-\Pi_T^0v}T + h_T \norm{\nabla(v-\Pi_T^0v)}T 
    \lesssim h_T\norm{\nabla v}T.
  \end{align*}
  This concludes the proof.
\end{proof} 

To obtain an operator that also satisfies property~\eqref{eq:fortinprop:b} we consider slight modifications by adding a correction term based on element bubbles.
Define the space
\begin{align*}
  V_{hp}^{\nabla} = \widetilde V_{hp}^{\nabla} + \linhull\set{\eta_{T,j}}{j=1,\dots,\dim(P^p(T))}
\end{align*}
and the operator $\PiFp^\nabla\colon H^1(T)\to V_{hp}^\nabla$ for all $v\in H^1(T)$ by
\begin{align*}
  \PiFp^\nabla v = \PiFptilde^\nabla v + \sum_{j=1}^{\dim(P^p(T))} \frac{\ip{\chi_{T,j}}{(1-\PiFptilde^\nabla)v}_T}{\ip{\chi_{T,j}}{\eta_{T,j}}_T}\eta_{T,j}.
\end{align*}

\begin{theorem}\label{thm:fortin:normal}
  Operator $\PiF^\nabla = \PiFp^\nabla$ is idempotent on $P^0(T)$, satisfies~\eqref{eq:fortinprop:a}--\eqref{eq:fortinprop:b} and 
  \begin{align*}
    \norm{\PiFp^\nabla v}T &\lesssim \norm{v}T + h_T\norm{\nabla v}T, \quad
    \norm{\nabla \PiFp^\nabla v}T \lesssim \norm{\nabla v}T, 
    \quad \norm{(1-\PiFp^\nabla)v}T \lesssim h_T\norm{\nabla v}T
  \end{align*}
  for all $v\in H^1(T)$.
\end{theorem}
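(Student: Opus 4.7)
The plan is to bootstrap from Lemma~\ref{lem:fortin:normal}, exploiting that the correction term $R v := \sum_{j} \frac{\ip{\chi_{T,j}}{(1-\PiFptilde^\nabla)v}_T}{\ip{\chi_{T,j}}{\eta_{T,j}}_T}\eta_{T,j}$ is built from element bubble functions that vanish on $\partial T$ and satisfy the biorthogonality $\ip{\chi_{T,k}}{\eta_{T,j}}_T = |T|\delta_{j,k}$. First, for idempotency on $P^0(T)$: if $v \in P^0(T)$ then $\PiFptilde^\nabla v = v$ by Lemma~\ref{lem:fortin:normal}, so $(1-\PiFptilde^\nabla)v=0$, the correction vanishes, and $\PiFp^\nabla v = v$.

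For property~\eqref{eq:fortinprop:a}: since $\eta_{T,j}|_{\partial T}=0$, we have $\PiFp^\nabla v|_{\partial T} = \PiFptilde^\nabla v|_{\partial T}$, so~\eqref{eq:fortinprop:a} is inherited from Lemma~\ref{lem:fortin:normal}. For property~\eqref{eq:fortinprop:b}: since $\{\chi_{T,k}\}_{k=1}^{\dim P^p(T)}$ is a basis of $P^p(T)$, it suffices to test against $\chi_{T,k}$. Using the biorthogonality and $\ip{\chi_{T,j}}{\eta_{T,j}}_T=|T|$, I compute
\begin{align*}
  \ip{\chi_{T,k}}{v-\PiFp^\nabla v}_T = \ip{\chi_{T,k}}{(1-\PiFptilde^\nabla)v}_T - \sum_{j} \frac{\ip{\chi_{T,j}}{(1-\PiFptilde^\nabla)v}_T}{|T|}\,|T|\delta_{j,k}=0.
\end{align*}

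For the boundedness estimates: by the triangle inequality applied to $\PiFp^\nabla v = \PiFptilde^\nabla v + Rv$, the first summand is controlled by Lemma~\ref{lem:fortin:normal}. For the correction, the Cauchy--Schwarz inequality combined with $\|\chi_{T,j}\|_T \eqsim |T|^{1/2}$, $\|\eta_{T,j}\|_T\eqsim |T|^{1/2}$ and the approximation estimate $\|(1-\PiFptilde^\nabla)v\|_T \lesssim h_T\|\nabla v\|_T$ from Lemma~\ref{lem:fortin:normal} yield $\|Rv\|_T \lesssim h_T\|\nabla v\|_T$. For the gradient, the inverse estimate $\|\nabla \eta_{T,j}\|_T \lesssim h_T^{-1}|T|^{1/2}$ gives $\|\nabla Rv\|_T \lesssim \|\nabla v\|_T$, and combining with Lemma~\ref{lem:fortin:normal} finishes these two bounds.

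Finally, the approximation property follows from idempotency on $P^0(T)$ exactly as in the proof of Lemma~\ref{lem:fortin:normal}: writing $(1-\PiFp^\nabla)v = (1-\PiFp^\nabla)(v-\Pi_T^0v)$, applying the two just-proved boundedness estimates and the standard $L^2$ approximation property $\|v-\Pi_T^0 v\|_T \lesssim h_T\|\nabla v\|_T$ completes the bound. I do not expect a genuine obstacle here since the correction operator $R$ is designed precisely so that the biorthogonal pairing gives a one-line verification of~\eqref{eq:fortinprop:b} while preserving boundary values and introducing only lower-order contributions in $\|\cdot\|_T$; the only point requiring care is to verify that the scaling constants $|T|^{1/2}$ in $\|\chi_{T,j}\|_T$, $\|\eta_{T,j}\|_T$ and the pairing $\ip{\chi_{T,j}}{\eta_{T,j}}_T=|T|$ combine so that the correction inherits the $h_T\|\nabla v\|_T$ bound from $(1-\PiFptilde^\nabla)v$ rather than amplifying it.
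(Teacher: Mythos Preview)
Your proof is correct and follows essentially the same approach as the paper: bootstrap from Lemma~\ref{lem:fortin:normal}, use $\eta_{T,j}|_{\partial T}=0$ for~\eqref{eq:fortinprop:a}, the biorthogonality $\ip{\chi_{T,k}}{\eta_{T,j}}_T=|T|\delta_{j,k}$ for~\eqref{eq:fortinprop:b}, and the triangle inequality together with Cauchy--Schwarz, the scaling of $\chi_{T,j}$, $\eta_{T,j}$, the inverse estimate for $\nabla\eta_{T,j}$, and the approximation property of $\PiFptilde^\nabla$ for the boundedness and approximation estimates. The paper's proof is organized identically.
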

\begin{proof}
  The idempotency on $P^0(T)$ follows from the idempotency of $\PiFptilde^{\nabla}$ (Lemma~\ref{lem:fortin:normal}).
  Statement~\eqref{eq:fortinprop:a} follows also from Lemma~\ref{lem:fortin:normal} since the element bubbles $\eta_{T,j}$ vanish on the boundary and, therefore, $\PiFp^\nabla v|_{\partial T} = \PiFptilde^{\nabla}v|_{\partial T}$. 
  To see~\eqref{eq:fortinprop:b} a simple calculation using the orthogonality $\ip{\chi_{T,j}}{\eta_{T,k}}_T = |T|\delta_{j,k}$ yields
  \begin{align*}
    \ip{\chi_{T,k}}{(1-\PiF^\nabla)v}_T &= \ip{\chi_{T,k}}{(1-\PiFptilde^{\nabla})v}_T - \sum_{j=1}^{\dim(P^p(T))}\frac{\ip{\chi_{T,k}}{(1-\PiFptilde^{\nabla})v}_T}{\ip{\chi_{T,j}}{\eta_{T,j}}_T}\ip{\chi_{T,k}}{\eta_{T,j}}_T \\
    &= \ip{\chi_{T,k}}{(1-\PiFptilde^{\nabla})v}_T -\ip{\chi_{T,k}}{(1-\PiFptilde^{\nabla})v}_T = 0.
  \end{align*}
  It remains to prove the boundedness estimates which follow --- besides standard arguments --- from the boundedness estimates of $\PiFtilde^{\nabla,j}$ (see Lemma~\ref{lem:fortin:normal}). 
  First, using the triangle inequality, Cauchy--Schwarz inequality and scaling arguments we estimate
  \begin{align*}
    \norm{\PiFp^\nabla v}T &\lesssim \norm{\PiFptilde^{\nabla}v}T + \sum_{j=1}^{\dim(P^p(T))}|T|^{-1/2}\norm{v-\PiFptilde^{\nabla}v}T \norm{\eta_{T,j}}T
    \\
    &\lesssim \norm{\PiFptilde^{\nabla}v}T + \norm{v-\PiFptilde^{\nabla}v}T \lesssim \norm{v}T + h_T\norm{\nabla v}T.
  \end{align*}
  The gradient contribution is estimated by employing the inverse estimate $\norm{\nabla \eta_{T,j}}{T}\lesssim h_T^{-1}\norm{\eta_{T,j}}T$ together with Lemma~\ref{lem:fortin:normal} to give
  \begin{align*}
    \norm{\nabla\PiFp^\nabla v}{T} \lesssim \norm{\nabla\PiFptilde^{\nabla} v}T + h_T^{-1}\norm{v-\PiFptilde^{\nabla}v}T 
    \lesssim \lesssim \norm{\nabla v}T.
  \end{align*}
  The final assertion $\norm{(1-\PiFp^\nabla)v}T \lesssim h_T\norm{\nabla v}T$ follows as in Lemma~\ref{lem:fortin:normal}.
\end{proof}

\begin{corollary}
  Suppose that $h_T\lesssim \alpha$. 
  From Lemma~\ref{lem:fortin:normal} and Theorem~\ref{thm:fortin:normal} it follows that $\PiF^\nabla = \PiFptilde^\nabla$ and $\PiF^\nabla = \PiFp^\nabla$ satisfy~\eqref{eq:fortinprop:bound}.
  Particularly, $\PiF^\nabla = \PiFp^\nabla$ has properties~\eqref{eq:fortinprop}.
\end{corollary}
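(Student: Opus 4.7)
The statement is a direct consequence of the boundedness estimates already established, and the only real input is the regime assumption $h_T \lesssim \alpha$. My plan is therefore to simply assemble the two existing bounds into the parameter-dependent norm and absorb the $h_T$-factor using the hypothesis.

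Concretely, I would start from the two estimates of Theorem~\ref{thm:fortin:normal} (the case $\PiF^\nabla=\PiFp^\nabla$; the argument for $\PiFptilde^\nabla$ from Lemma~\ref{lem:fortin:normal} is identical),
\begin{align*}
  \norm{\PiF^\nabla v}T \lesssim \norm{v}T + h_T\norm{\nabla v}T,\qquad
  \norm{\nabla\PiF^\nabla v}T \lesssim \norm{\nabla v}T,
\end{align*}
square and combine them according to the definition of $\norm{\cdot}{T,\alpha}$, obtaining
\begin{align*}
  \norm{\PiF^\nabla v}{T,\alpha}^2
  = \norm{\PiF^\nabla v}T^2 + \alpha^2\norm{\nabla\PiF^\nabla v}T^2
  \lesssim \norm{v}T^2 + h_T^2\norm{\nabla v}T^2 + \alpha^2\norm{\nabla v}T^2.
\end{align*}
Invoking $h_T\lesssim\alpha$ gives $h_T^2\lesssim\alpha^2$, so the middle term is absorbed into the third, and we conclude $\norm{\PiF^\nabla v}{T,\alpha}^2 \lesssim \norm{v}T^2 + \alpha^2\norm{\nabla v}T^2 = \norm{v}{T,\alpha}^2$, which is exactly~\eqref{eq:fortinprop:bound}.

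Since Theorem~\ref{thm:fortin:normal} already established~\eqref{eq:fortinprop:a} and~\eqref{eq:fortinprop:b} unconditionally (no smallness assumption on $h_T/\alpha$ was used), combining with the bound just obtained shows that $\PiF^\nabla=\PiFp^\nabla$ satisfies the full package~\eqref{eq:fortinprop}. There is no real obstacle here: the entire content lies in the observation that the $h_T\norm{\nabla v}T$ defect in the $L^2$-bound on the Fortin operator, which is what caused the blow-up factor $\alpha^{-1}h_T$ in the introduction's discussion, becomes harmless precisely when $h_T\lesssim\alpha$. This is the reason the regime $h_T\lesssim\alpha$ is labeled ``moderate parameter'' and motivates the construction with modified bubbles in the complementary regime $\alpha\lesssim h_T$ treated later.
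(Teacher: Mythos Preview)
Your proposal is correct and matches the paper's approach: the corollary is stated without a separate proof in the paper, since it is an immediate consequence of the cited Lemma~\ref{lem:fortin:normal} and Theorem~\ref{thm:fortin:normal} together with the assumption $h_T\lesssim\alpha$. Your explicit assembly of the two bounds into the $\norm{\cdot}{T,\alpha}$ norm and absorption of the $h_T^2\norm{\nabla v}T^2$ term is exactly the intended (trivial) argument.
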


\begin{remark}\label{rem:supconv:grad}
  In general, discrete test spaces are chosen such that a Fortin operator exists, which not necessarily implies approximation results of the form
  \begin{align*}
    \min_{v_h\in V_h} \norm{v-v_h}{T} + \norm{\nabla(v-v_h)}T \lesssim h_T |v|_{2,T} \quad\text{for } v\in H^2(T). 
  \end{align*}
  Here, $|\cdot|_{2,T}$ denotes the $H^2(T)$ seminorm.
  For certain supercloseness results in the DPG method the latter approximation property is needed, see~\cite{SupConv2}.
  To ensure this property one can simply require $P^1(T)\subset V_h$.
\end{remark}

\subsection{Constructions for small parameter}
In this section we focus on the case $\alpha\lesssim h_T$. 
Let $\PiF^\nabla=\PiFptilde^\nabla$ or $\PiF^\nabla=\PiFp^\nabla$. By Lemma~\ref{lem:fortin:normal} resp. Theorem~\ref{thm:fortin:normal} we have the boundedness
\begin{align}\label{eq:bound:fortin}
  \norm{\PiF^\nabla v}T \lesssim \norm{v}T + h_T \norm{\nabla v}T \lesssim \max\{1,h_T\alpha^{-1}\} \left(\norm{v}T + \alpha\norm{\nabla v}T\right).
\end{align}
We conclude that $\norm{\PiF^\nabla v}{T,\alpha} \lesssim \max\{1,h_T\alpha^{-1}\} \norm{v}{T,\alpha}$.
This tells us that $\PiF^\nabla$ is only conditionally uniformly bounded. Particularly, for small parameters $\alpha$ and coarse meshes huge boundedness constants are expected so that robustness of the numerical methods is likely to be lost. This can actually be observed in our numerical experiments presented in Section~\ref{sec:num}.
Let us remark that the operators constructed in~\cite{practicalDPG,breakSpace,DemkowiczZanotti20} also satisfy~\eqref{eq:bound:fortin} and are not suited for small parameters, $\alpha\lesssim h_T$.

To overcome this problem we construct an operator based on the modified face bubble functions $\eta_{\alpha,F,j}$ instead of $\eta_{F,j}$.
The construction of the novel Fortin operators follows the definition of $\PiFptilde^\nabla$ and $\PiFp^\nabla$ replacing $\eta_{F,j}$ with the modified face bubble functions $\eta_{\alpha,F,j}$. First, set
\begin{align*}
  \widetilde V_{hp,\alpha}^\nabla &:= P^0(T) + \linhull\{\eta_{\alpha,F,j}\,:\, j=1,\dots,\dim(P^p(F)), F\in\FF_T\}, 
\end{align*}
and define $\PiFaptilde^\nabla \colon H^1(T)\to \widetilde V_{hp,\alpha}^\nabla$ for all $v\in H^1(T)$ by
\begin{align*}
  \PiFaptilde^\nabla v &:= \Pi_T^0 v + \sum_{F\in\FF_T} \sum_{j=1}^{\dim(P^p(F))} \frac{\dual{\chi_{F,j}}{(1-\Pi_T^0)v}_F}{\dual{\chi_{F,j}}{\eta_{\alpha,F,j}}_F} \eta_{\alpha,F,j}.
\end{align*}
Its main properties are given in
\begin{lemma}\label{lem:fortin}
  Operator $\PiF^\nabla = \PiFaptilde^\nabla$ satisfies~\eqref{eq:fortinprop:a} and is idempotent on $P^0(T)$.
  If $\alpha\lesssim h_T$, then
  \begin{align*}
    \norm{\PiFaptilde^\nabla v}{T,\alpha} \lesssim \norm{v}{T,\alpha}, \quad 
    \norm{v-\PiFaptilde^\nabla v}T &\lesssim h_T \norm{\nabla v}{T}
  \end{align*}
  for all $v\in H^1(T)$.
\end{lemma}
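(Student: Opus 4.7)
The plan is to mirror the proof of Lemma~\ref{lem:fortin:normal} line-for-line, substituting $\eta_{\alpha,F,j}$ for $\eta_{F,j}$, and then extract $\alpha$-robustness by pairing the sharper modified-bubble scalings of Lemma~\ref{lem:modbubble:ho} with the multiplicative trace inequality of Lemma~\ref{lem:traceineq}. A first key observation is that although $\eta_{\alpha,F,j}$ differs from $\eta_{F,j}$ in the interior, the testing duality is preserved, namely $\dual{\chi_{F',k}}{\eta_{\alpha,F,j}}_{\partial T}=\delta_{F,F'}\delta_{j,k}|F|$, because $\eta_{\alpha,F,j}|_{\partial T}=\eta_{F,j}|_{\partial T}$ (Lemma~\ref{lem:modbubble:ho}). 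With this in hand, idempotency on $P^0(T)$ follows from the fact that $(1-\Pi_T^0)v=0$ for $v\in P^0(T)$ makes the whole correction vanish, and property~\eqref{eq:fortinprop:a} follows by the same cancellation computation as in Lemma~\ref{lem:fortin:normal}.

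For the $\alpha$-robust boundedness I proceed in three steps. Step~(i): by Cauchy--Schwarz, $\norm{\chi_{F,j}}F\eqsim|F|^{1/2}$, and the preserved duality, each correction coefficient is bounded by $|F|^{-1/2}\norm{(1-\Pi_T^0)v}F$ times the appropriate norm of $\eta_{\alpha,F,j}$. Step~(ii): insert the two modified-bubble scalings $\norm{\eta_{\alpha,F,j}}T\lesssim|T|^{1/2}(\alpha/h_T)^{1/2}$ and $\norm{\nabla\eta_{\alpha,F,j}}T\lesssim h_T^{-1}|T|^{1/2}(\alpha/h_T)^{-1/2}$; using $|T|/|F|\eqsim h_T$, the $L^2$-contribution of the correction scales like $\alpha^{1/2}\norm{(1-\Pi_T^0)v}F$, while the gradient contribution scales like $\alpha^{-1/2}\norm{(1-\Pi_T^0)v}F$ — note that the $\alpha$-powers are exactly opposite, which is the reason modified bubbles are effective.

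Step~(iii): apply the multiplicative trace inequality $\norm{(1-\Pi_T^0)v}F\lesssim\norm{v-\Pi_T^0 v}T^{1/2}\norm{\nabla v}T^{1/2}$ and Young's inequality with the weight pair $\bigl(\norm{v-\Pi_T^0 v}T,\,\alpha\norm{\nabla v}T\bigr)$. For the $L^2$-part this yields
\[
\alpha^{1/2}\norm{(1-\Pi_T^0)v}T^{1/2}\norm{\nabla v}T^{1/2}\lesssim\norm{v-\Pi_T^0 v}T+\alpha\norm{\nabla v}T\lesssim\norm{v}{T,\alpha};
\]
for the gradient part, multiplying first by $\alpha$ and then applying Young,
\[
\alpha\cdot\alpha^{-1/2}\norm{(1-\Pi_T^0)v}T^{1/2}\norm{\nabla v}T^{1/2}\lesssim\norm{v-\Pi_T^0 v}T+\alpha\norm{\nabla v}T\lesssim\norm{v}{T,\alpha}.
\]
Combining with $\norm{\Pi_T^0 v}T\leq\norm{v}T$ and squaring/summing gives $\norm{\PiFaptilde^\nabla v}{T,\alpha}\lesssim\norm{v}{T,\alpha}$.

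The approximation property $\norm{v-\PiFaptilde^\nabla v}T\lesssim h_T\norm{\nabla v}T$ follows as in Lemma~\ref{lem:fortin:normal}: use idempotency on $P^0(T)$ to replace $v$ by $v-\Pi_T^0 v$, combine with $\norm{v-\Pi_T^0 v}T\lesssim h_T\norm{\nabla v}T$ and the step~(i)--(ii) bound, and exploit $\alpha\lesssim h_T$ to write $\alpha^{1/2}\norm{(1-\Pi_T^0)v}F\lesssim(\alpha h_T)^{1/2}\norm{\nabla v}T\leq h_T\norm{\nabla v}T$. The main obstacle lies entirely in step~(iii): only by reading the trace inequality \emph{multiplicatively} and pairing it with an $\alpha$-weighted Young inequality does one convert the opposite $\alpha^{\pm 1/2}$ factors produced by the modified bubbles into a genuinely $\alpha$-uniform bound in $\norm\cdot{T,\alpha}$; the naive (non-multiplicative) trace estimate only yields the suboptimal factor $(\alpha h_T)^{1/2}$, which fails to reproduce $\alpha\norm{\nabla v}T$ on the right-hand side.
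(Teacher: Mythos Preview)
Your proof is correct and follows essentially the same approach as the paper: idempotency and property~\eqref{eq:fortinprop:a} via the boundary identity $\eta_{\alpha,F,j}|_{\partial T}=\eta_{F,j}|_{\partial T}$, then boundedness by combining the modified-bubble scalings of Lemma~\ref{lem:modbubble:ho} with the multiplicative trace inequality and an $\alpha$-weighted Young inequality, and finally the approximation property from idempotency together with $\alpha\lesssim h_T$. The only cosmetic difference is that for the approximation estimate the paper invokes the already established bound $\norm{\PiFaptilde^\nabla w}{T,\alpha}\lesssim\norm{w}{T,\alpha}$ applied to $w=v-\Pi_T^0 v$, whereas you re-estimate the correction term directly via $(\alpha h_T)^{1/2}\norm{\nabla v}T$; both routes are equivalent.
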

\begin{proof}
  The idempotency on $P^0(T)$ can be seen directly from the definition of the operator. 
  Noting that $\eta_{\alpha,F,j}|_{\partial T} = \eta_{F,j}|_{\partial T}$ for any $F,j$ the proof of Fortin property~\eqref{eq:fortinprop:a} follows as in Lemma~\ref{lem:fortin:normal}.
  
  It remains to prove the boundedness estimates. Suppose that $\alpha\lesssim h_T$. 
  First, using the triangle inequality and the Cauchy--Schwarz inequality together with the properties of the modified bubble (Lemma~\ref{lem:modbubble:ho}), $\norm{\chi_{F,j}}F \eqsim |F|^{1/2}$, 
  $\dual{\chi_{F,j}}{\eta_{\alpha,F,j}}_F = |F|$ we infer 
  \begin{align*}
    \norm{\PiFaptilde^\nabla v}T &\leq \norm{v}T + \sum_{F\in\FF_T} \sum_{j=1}^{\dim(P^p(F))} \frac{\norm{\chi_{F,j}}F\norm{v-\Pi_T^0v}F}{\dual{\chi_{F,j}}{\eta_{\alpha,F,j}}} \norm{\eta_{\alpha,F,j}}T
    \\  &\lesssim \norm{v}T + \sum_{F\in\FF_T} \norm{v-\Pi_T^0v}F |F|^{-1/2} |T|^{1/2} \alpha^{1/2}h_T^{-1/2}
    \eqsim \norm{v}T + \sum_{F\in\FF_T} \norm{v-\Pi_T^0v}F  \alpha^{1/2}.
  \end{align*}
  Then, with the multiplicative trace inequality~\eqref{eq:traceineq} and Young's inequality we further get 
  \begin{align*}
    \norm{v-\Pi_T^0v}F  \alpha^{1/2} &\lesssim \norm{v-\Pi_T^0 v}T^{1/2}\alpha^{1/2}\norm{\nabla v}T^{1/2}
    \lesssim \norm{v-\Pi_T^0 v}T + \alpha \norm{\nabla v}T. 
  \end{align*}
  Putting all the estimates together we infer that
  \begin{align*}
    \norm{\PiFaptilde^\nabla v}T \lesssim  \norm{v}T + \sum_{F\in\FF_T}\norm{v-\Pi_T^0v}F  \alpha^{1/2} \lesssim \norm{v}T + \alpha \norm{\nabla v}T.
  \end{align*}
  We are left with the gradient contribution of the norm. With similar arguments as before we get
  \begin{align*}
    \alpha\norm{\nabla \PiFaptilde^\nabla v}T &\leq \alpha\sum_{F\in\FF_T} \sum_{j=1}^{\dim(P^p(F))}  \frac{\norm{\chi_{F,j}}F\norm{v-\Pi_T^0v}F}{\dual{\chi_{F,j}}{\eta_{\alpha,F,j}}} \norm{\nabla \eta_{\alpha,F,j}}T
    \\
    &\lesssim \alpha\sum_{F\in\FF_T} \alpha^{-1/2} \norm{v-\Pi_T^0 v}F 
    \\
    &\lesssim \alpha^{1/2} \norm{v-\Pi_T^0v}T^{1/2}\norm{\nabla v}T^{1/2} \lesssim \norm{v}T + \alpha\norm{\nabla v}T.
  \end{align*}
  Combining all the estimates from above we conclude the boundedness of the Fortin operator. 

  Finally, to see the approximation property note that $\PiFaptilde^\nabla 1 = 1$, thus, 
  \begin{align*}
    \norm{(1-\PiFaptilde^\nabla)v}T &= \norm{(1-\PiFaptilde^\nabla)(v-\Pi_T^0v)}T \lesssim \norm{v-\Pi_T^0v}T + \alpha \norm{\nabla v}T \lesssim h_T \norm{\nabla v}T.
  \end{align*}
  Note that we have used that $\alpha\lesssim h_T$. 
\end{proof}

To ensure property~\eqref{eq:fortinprop:b} we follow the idea of the definition of $\PiFp^\nabla$ by adding correction terms based on element bubble functions. Contrary to the face bubble functions, the element bubble functions do not need to be modified. 
Set 
\begin{align*}
  V_{hp,\alpha}^\nabla = \widetilde V_{hp,\alpha}^\nabla+\linhull\set{\eta_{T,j}}{j=1,\dots,\dim(P^p(T))}
\end{align*}
and define $\PiFap^\nabla\colon H^1(T)\to V_{hp,\alpha}^\nabla$ for all $v\in H^1(T)$ by
\begin{align*}
  \PiFap^\nabla v:= \PiFaptilde^\nabla v + \sum_{j=1}^{\dim(P^p(T))} \frac{\ip{\chi_{T,j}}{(1-\PiFaptilde^\nabla)v}_T}{\ip{\chi_{T,j}}{\eta_{T,j}}_T}\eta_{T,j}.
\end{align*}
The following theorem is one of our main results.
\begin{theorem}\label{thm:fortin}
  Suppose that $\alpha\lesssim h_T$.
  Then, $\PiF^\nabla = \PiFap^\nabla$ is idempotent on $P^0(T)$ and satisfies~\eqref{eq:fortinprop}.
\end{theorem}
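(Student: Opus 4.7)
The proof strategy is to mirror the argument of Theorem~\ref{thm:fortin:normal}, but in the $\alpha$-weighted setting, replacing the appeals to Lemma~\ref{lem:fortin:normal} by the corresponding ones to Lemma~\ref{lem:fortin}. The structural point is that $\PiFap^\nabla$ only differs from $\PiFaptilde^\nabla$ by a correction term that is a linear combination of the element bubbles $\eta_{T,j}$, which vanish on $\partial T$. Therefore idempotency on $P^0(T)$ and boundary property~\eqref{eq:fortinprop:a} transfer directly from $\PiFaptilde^\nabla$: if $v\in P^0(T)$ then $\PiFaptilde^\nabla v = v$ so the correction vanishes; and $\PiFap^\nabla v|_{\partial T} = \PiFaptilde^\nabla v|_{\partial T}$ for every $v\in H^1(T)$.

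For the interior orthogonality~\eqref{eq:fortinprop:b}, I would test against an arbitrary basis function $\chi_{T,k}$ and exploit the biorthogonality $\ip{\chi_{T,j}}{\eta_{T,k}}_T = |T|\delta_{j,k}$; the sum telescopes exactly as in the proof of Theorem~\ref{thm:fortin:normal}, yielding $\ip{\chi_{T,k}}{(1-\PiFap^\nabla)v}_T = 0$.

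The substantive step is the boundedness estimate~\eqref{eq:fortinprop:bound} under the regime $\alpha\lesssim h_T$. I would proceed by the triangle inequality, split the estimate into the $L^2$-part and the $\alpha$-weighted gradient part, and bound the correction contribution via Cauchy--Schwarz together with the scaling $\norm{\eta_{T,j}}T \eqsim |T|^{1/2}$, the inverse estimate $\norm{\nabla\eta_{T,j}}T \lesssim h_T^{-1}|T|^{1/2}$, and $\norm{\chi_{T,j}}\infty\eqsim 1$ (so $|\ip{\chi_{T,j}}{w}_T|\lesssim |T|^{1/2}\norm{w}T$). This reduces the correction bound to a multiple of $\norm{(1-\PiFaptilde^\nabla)v}T$ for the $L^2$-term, and to $\alpha h_T^{-1}\norm{(1-\PiFaptilde^\nabla)v}T$ for the gradient term. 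Now the approximation estimate from Lemma~\ref{lem:fortin} gives $\norm{(1-\PiFaptilde^\nabla)v}T \lesssim h_T\norm{\nabla v}T$, and combining with $\alpha\lesssim h_T$ yields both
\begin{align*}
  \norm{(1-\PiFaptilde^\nabla)v}T &\lesssim h_T\norm{\nabla v}T \lesssim \norm{v}T+\alpha\norm{\nabla v}T, \\
  \alpha h_T^{-1}\norm{(1-\PiFaptilde^\nabla)v}T &\lesssim \alpha\norm{\nabla v}T.
\end{align*}
Adding the already-established boundedness $\norm{\PiFaptilde^\nabla v}{T,\alpha}\lesssim\norm{v}{T,\alpha}$ from Lemma~\ref{lem:fortin} then gives $\norm{\PiFap^\nabla v}{T,\alpha}\lesssim \norm{v}{T,\alpha}$, as required.

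The main obstacle is the last bound: since the element bubbles are \emph{not} modified, the inverse estimate produces the factor $\alpha h_T^{-1}$ in the gradient contribution of the correction. This would be dangerous in isolation, but it is precisely neutralized by the $h_T$ in the approximation estimate of Lemma~\ref{lem:fortin}, which itself relies on $\alpha\lesssim h_T$. Thus the key point to check carefully is that Lemma~\ref{lem:fortin} supplies the right scaling $\norm{(1-\PiFaptilde^\nabla)v}T\lesssim h_T\norm{\nabla v}T$ (not merely $\lesssim\alpha\norm{\nabla v}T$), so that the element-bubble correction remains uniformly controlled in the $\alpha$-weighted norm.
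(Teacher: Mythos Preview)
Your overall strategy matches the paper's, but one displayed inequality is false. You write
\[
  \norm{(1-\PiFaptilde^\nabla)v}T \lesssim h_T\norm{\nabla v}T \lesssim \norm{v}T+\alpha\norm{\nabla v}T,
\]
claiming the second step follows from $\alpha\lesssim h_T$. This is the wrong direction: $\alpha\lesssim h_T$ gives $\alpha\norm{\nabla v}T \lesssim h_T\norm{\nabla v}T$, not the reverse, and in general $h_T\norm{\nabla v}T$ cannot be controlled by $\norm{v}{T,\alpha}$ when $\alpha\ll h_T$.

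The fix is immediate and is what the paper does. For the $L^2$-part of the correction do not detour through the approximation estimate at all; just use the triangle inequality together with the boundedness $\norm{\PiFaptilde^\nabla v}{T,\alpha}\lesssim\norm{v}{T,\alpha}$ from Lemma~\ref{lem:fortin} to get $\norm{(1-\PiFaptilde^\nabla)v}T\lesssim\norm{v}{T,\alpha}$. For the $\alpha$-weighted gradient part your argument via the approximation estimate is correct, but the paper instead simply notes $\alpha h_T^{-1}\lesssim 1$, so that the whole correction in the $\norm{\cdot}{T,\alpha}$-norm is bounded by a constant times $\norm{(1-\PiFaptilde^\nabla)v}T$, and then invokes boundedness once. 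Either route closes the proof; only your $L^2$-step needed repair.
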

\begin{proof}
  Idempotency follows from the definition.
  Verifying~\eqref{eq:fortinprop:a}--\eqref{eq:fortinprop:b} follows as in Theorem~\ref{thm:fortin:normal}. 
  For the boundedness estimate we also use the arguments displayed in the proof of Theorem~\ref{thm:fortin:normal}. Note that with Lemma~\ref{lem:fortin} and $\norm{\nabla\eta_{T,j}}T\lesssim h_T^{-1}|T|^{1/2}$ we see that, for $v\in H^1(T)$,
  \begin{align*}
    \norm{\PiFap^\nabla v}{T,\alpha} &\lesssim \norm{\PiFaptilde^\nabla v}{T,\alpha} + 
    \sum_{j=1}^{\dim(P^p(T))}\frac{\ip{\chi_{T,j}}{v-\PiFaptilde^\nabla v}_T}{\ip{\chi_{T,j}}{\eta_{T,j}}_T} (\norm{\eta_{T,j}}T + \alpha\norm{\nabla \eta_{T,j}}{T})
    \\
    &\lesssim \norm{v}{T,\alpha} + |T|^{-1/2}\norm{v-\PiFaptilde^\nabla v}T |T|^{1/2} + |T|^{-1/2}\norm{v-\PiFaptilde^\nabla v}T \alpha h_T^{-1}|T|^{1/2}
    \\
    &\lesssim \norm{v}{T,\alpha} + \norm{v-\PiFaptilde^\nabla v}T.
  \end{align*}
  In the last step we used $\alpha\lesssim h_T$. Boundedness of $\PiFaptilde^\nabla$ finishes the proof.
\end{proof}

\subsection{Alternative operator for lowest-order spaces and moderate parameter}
In this section we construct a Fortin operator $\PiF^{\nabla}\colon H^1(T)\to V_{h}^{\nabla,0}$ such that~\eqref{eq:fortinprop} is satisfied for the lowest-order case $p=0$. In~\cite{CarstensenGHW14} it is shown that for a low-order DPG method for the Poisson problem, test space $V_h^\nabla = P^1(T)$ for the scalar test functions is sufficient to guarantee well-posedness. The authors of~\cite{CarstensenGHW14} did use different techniques. 
Here, we complement their results by the construction of a Fortin operator. Define the spaces
\begin{align*}
  \widetilde V_h^{\nabla,0} = P^1(T), \quad V_h^{\nabla,0} = \widetilde V_h^{\nabla,0} + \linhull\{\eta_T\}
\end{align*}
and operators $\PiFtilde^{\nabla,0}\colon H^1(T)\to \widetilde V_h^{\nabla,0}$, $\PiF^{\nabla,0}\colon H^1(T)\to V_h^{\nabla,0}$ for all $v\in H^1(T)$ by
\begin{align*}
  \PiFtilde^{\nabla,0} v &= \Pi_T^0v + \sum_{F\in\FF_T} \frac{\dual{1}{(1-\Pi_T^0)v}_F}{\dual{1}{\nu_F}_F}\nu_F, \\
  \PiF^{\nabla,0} v&= \PiFtilde^{\nabla,0} v + \frac{\ip{1}{(1-\PiFtilde^{\nabla,0})v}_T}{\ip{1}{\eta_T}_T} \eta_T.
\end{align*}
The analysis of these operators can be done as in Section~\ref{sec:grad:moderate}. The details are left to the reader. 
\begin{theorem}
  Let $p=0$.
  Operator $\PiF^\nabla = \PiFtilde^{\nabla,0}$ resp. $\PiF^\nabla = \PiF^{\nabla,0}$ is idempotent on $P^0(T)$, satisfies~\eqref{eq:fortinprop:a} and
  \begin{align*}
    \norm{\PiF^\nabla v}T \lesssim \norm{v}T + h_T\norm{\nabla v}T, \quad
    \norm{\nabla \PiF^\nabla v}T \lesssim \norm{\nabla v}T
  \end{align*}
  for all $v\in H^1(T)$. Operator $\PiF^\nabla = \PiF^{\nabla,0}$ additionally satisfies~\eqref{eq:fortinprop:b}.
  \qed
\end{theorem}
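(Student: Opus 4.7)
The plan is to recycle the template used for Lemma~\ref{lem:fortin:normal} and Theorem~\ref{thm:fortin:normal}, but with the higher-order dual pairings replaced by their $p=0$ analogues. The key observation is that the role played by the pair $(\eta_{F,j},\chi_{F,j})$ with $\dual{\chi_{F,j}}{\eta_{F,j}}_F = |F|$ is now played by the pair $(\nu_F,\chi_{F'})$ of Lemma~\ref{lem:dualbasis}, which delivers the dual basis identity $\dual{\chi_{F'}}{\nu_F}_{\partial T} = |F|\delta_{F,F'}$, together with the trivial identity $\dual{1}{\nu_F}_F = |F|$ coming from $\nu_F|_F \equiv 1$. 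All scaling estimates $\norm{\nu_F}T \eqsim |T|^{1/2}$ and $\norm{\nabla\nu_F}T \lesssim h_T^{-1}|T|^{1/2}$, as well as $\norm{\eta_T}T \eqsim |T|^{1/2}$ and $\norm{\nabla\eta_T}T \lesssim h_T^{-1}|T|^{1/2}$, are obtained by the standard pull-back to the reference element $\Tref$.

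\emph{Idempotency and property~\eqref{eq:fortinprop:a}.} If $v \in P^0(T)$ then $\Pi_T^0 v = v$, so every correction term in the definition of $\PiFtilde^{\nabla,0} v$ vanishes, giving $\PiFtilde^{\nabla,0} v = v$; this also makes the $\eta_T$--correction in $\PiF^{\nabla,0} v$ trivial. For property~\eqref{eq:fortinprop:a} with $p=0$ it is enough to test against each $\chi_{F'}$, $F' \in \FF_T$. Expanding $\dual{\chi_{F'}}{v-\PiFtilde^{\nabla,0} v}_{\partial T}$ and using $\dual{\chi_{F'}}{\nu_F}_{\partial T} = |F|\delta_{F,F'}$ and $\dual{1}{\nu_{F'}}_{F'} = |F'|$, the sum over $F$ collapses to exactly $\dual{\chi_{F'}}{(1-\Pi_T^0)v}_{\partial T}$, which cancels the leading term, precisely as in Lemma~\ref{lem:fortin:normal}. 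For $\PiF^{\nabla,0}$ the extra term is supported in the element bubble $\eta_T$, which vanishes on $\partial T$, so property~\eqref{eq:fortinprop:a} is preserved.

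\emph{Boundedness.} Using the triangle inequality, Cauchy--Schwarz, the scaling estimates above and the multiplicative trace inequality~\eqref{eq:traceineq}, the same chain of inequalities as in Lemma~\ref{lem:fortin:normal} yields
\begin{align*}
  \norm{\PiFtilde^{\nabla,0} v}T &\lesssim \norm{v}T + \sum_{F\in\FF_T} |F|^{-1/2}|T|^{1/2}\norm{v-\Pi_T^0 v}F \lesssim \norm{v}T + h_T\norm{\nabla v}T,
\end{align*}
and, using $\nabla \Pi_T^0 v = 0$ together with $\norm{\nabla\nu_F}T \lesssim h_T^{-1}|T|^{1/2}$, $\norm{\nabla \PiFtilde^{\nabla,0} v}T \lesssim \norm{\nabla v}T$. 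For $\PiF^{\nabla,0}$ we then add the correction and argue as in Theorem~\ref{thm:fortin:normal}: the additional term contributes at most $|T|^{-1/2}\norm{v-\PiFtilde^{\nabla,0} v}T \,\norm{\eta_T}T \lesssim h_T\norm{\nabla v}T$ in $L^2(T)$ and $h_T^{-1}\norm{v-\PiFtilde^{\nabla,0} v}T \lesssim \norm{\nabla v}T$ in the $\nabla$-seminorm.

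\emph{Property~\eqref{eq:fortinprop:b}.} For $p=0$ this reduces to testing against the constant $1$. The bubble coefficient in $\PiF^{\nabla,0} v$ has been chosen precisely so that $\ip{1}{v-\PiF^{\nabla,0} v}_T = \ip{1}{(1-\PiFtilde^{\nabla,0})v}_T - \ip{1}{(1-\PiFtilde^{\nabla,0})v}_T = 0$, completing the argument. I do not expect any genuine obstacle; the only point where care is needed is the scalar identity $\dual{1}{\nu_F}_F = |F|$ (without which the telescoping in property~\eqref{eq:fortinprop:a} would be off by a constant) and the bookkeeping of the $|T|$, $|F|$, $h_T$ factors so that the trace inequality produces exactly the factor $h_T$ advertised in the statement.
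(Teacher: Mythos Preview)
Your proposal is correct and follows exactly the approach indicated by the paper, which simply states that ``the analysis of these operators can be done as in Section~\ref{sec:grad:moderate}'' and omits the details. Your identification of the key ingredients---the duality $\dual{\nu_F}{\chi_{F'}}_{\partial T}=|F|\delta_{F,F'}$ from Lemma~\ref{lem:dualbasis} together with $\nu_F|_F\equiv 1$, the scaling estimates for $\nu_F$ and $\eta_T$, and the trace inequality~\eqref{eq:traceineq}---is precisely what is needed to transcribe the proofs of Lemma~\ref{lem:fortin:normal} and Theorem~\ref{thm:fortin:normal} to this setting.
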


\subsection{Comparison with existing Fortin operators}\label{sec:fortin:existing}
As mentioned in the introduction, several works have already dealt with the construction of Fortin operators on a simplex that satisfy~\eqref{eq:fortinprop}, see, e.g.,~\cite{practicalDPG,breakSpace,DemkowiczZanotti20}.
These works all have in common that they construct resp. prove the existence of a Fortin operator $\PiF^\nabla\colon H^1(T) \to P^{p+n}(T)$ which satisfy~\eqref{eq:fortinprop:bound}-\eqref{eq:fortinprop:a} (for $\alpha\gtrsim h_T$) and
\begin{align*}
  \ip{u}{(1-\PiF^\nabla)v}_T = 0 \quad\forall u\in P^{p-1}(T), \, v\in H^1(T).
\end{align*}
The latter condition is not the same as~\eqref{eq:fortinprop:b}. In order to satisfy~\eqref{eq:fortinprop:b} one needs to increase the polynomial degree by one, i.e., $P^{p+1+n}(T)$ instead of $P^{p+n}(T)$.

Comparing the dimension of $P^{p+1+n}(T)$ and the space $V_{hp}^\nabla$, we get
\begin{align*}
  \dim(P^{p+1+n}(T)) &= \frac{\prod_{j=1}^n(p+1+n+j)}{n!}, \\
  \dim(V_{hp}^\nabla) &= 1 + (n+1)\frac{\prod_{j=1}^{n-1}(p+j)}{(n-1)!} + \frac{\prod_{j=1}^n(p+j)}{n!}.
\end{align*}
For the lowest-order case $p=0$ we thus find
\begin{align*}
  \dim(P^{n+1}(T)) &= \begin{cases}
    10 & n=2, \\
    35 & n=3,
  \end{cases}
  \quad\text{and}\quad
  \dim(V_{h0}^\nabla) = \begin{cases}
    5 & n=2, \\
    6 & n=3.
  \end{cases}
\end{align*}
For operator $\PiF^{\nabla,0}\colon H^1(T)\to V_h^{\nabla,0}$ we even have a reduction to
\begin{align*}
  \dim(V_{h}^{\nabla,0}) = n+1+1 =
  \begin{cases}
    4 & n=2, \\
    5 & n=3.
  \end{cases}
\end{align*}
In conclusion, our test spaces are systematically smaller than previously used ones, and guarantee robustness contrary to the previous cases. 

\section{Fortin operator in $\Hdivset{T}$}\label{sec:fortinDiv}
We consider a fixed parameter $\alpha>0$ and space $\Hdivset{T}$ equipped with the (squared) norm
\begin{align*}
  \norm{\ttau}{T,\alpha}^2 = \norm{\ttau}{T}^2 + \alpha^2 \norm{\div\ttau}{T}^2 \quad\text{for } \ttau\in\Hdivset{T}.
\end{align*}
The motivation for this section is the construction of Fortin operators, say $\PiF^{\div}\colon \Hdivset{T}\to V_h$ (where $V_h\subseteq\Hdivset{T}$ is some discrete space) such that, for all $\ttau\in \Hdivset{T}$,
\begin{subequations}\label{eq:fortinDiv}
\begin{align}
  \norm{\PiF^{\div}\ttau}{T,\alpha} &\leq C_\mathrm{F} \norm{\ttau}{T,\alpha}, \label{eq:fortinDiv:bound}\\
  \dual{(\ttau-\PiF^{\div}\ttau)\cdot\normal_T}{u}_{\partial T} &= 0 \quad\forall u\in P_c^{p+1}(\FF_T), \label{eq:fortinDiv:a}\\
  \ip{\ssigma}{\ttau-\PiF^{\div}\ttau}_{T} &= 0 \quad\forall \ssigma \in \PP^p(T) \label{eq:fortinDiv:b}
\end{align}
with $C_\mathrm{F}>0$ independent of $\alpha$, $h_T$ (but possibly dependent on $p\in\N_0$).
The latter two identities also imply 
\begin{align}
  \ip{u}{\div(\ttau-\PiF^{\div}\ttau)}_T = 0 \quad\forall u\in P^{p+1}(T), \label{eq:fortinDiv:c}
\end{align}
\end{subequations}
which can be seen from integration by parts: Let $u\in P^{p+1}(T)$ be given, then
\begin{align*}
  \ip{u}{\div\PiF^{\div}\ttau}_T &= -\ip{\nabla u}{\PiF^{\div}\ttau}_T + \dual{\PiF^{\div}\ttau\cdot\normal_T}{u|_{\partial T}}_{\partial T} 
  \\
  &=-\ip{\nabla u}\ttau_T + \dual{\ttau\cdot\normal_T}{u|_{\partial T}}_{\partial T}  = \ip{u}{\div\ttau}_T \quad\forall \ttau\in \Hdivset{T}.
\end{align*}

\subsection{Construction for moderate parameter}
Define 
\begin{align*}
  \widetilde V_{hp}^{\div} = \linhull\set{\ppsi_{\partial T,j}}{j=1,\dots,\dim(\widetilde P^{p+1}(T))} \subset \PP^{p+2}(T)
\end{align*}
and operator $\PiFptilde^{\div}\colon \Hdivset{T}\to \widetilde V_{hp}^{\div}$ by
\begin{align*}
  \PiFptilde^{\div}\ttau = \sum_{j=1}^{\dim(P_c^{p+1}(\partial T))} \frac{\dual{\ttau\cdot\normal_T}{\nu_{\partial T,j}}_{\partial T}}{\dual{\ppsi_{\partial T,j}\cdot\normal_T}{\nu_{\partial T,j}}_{\partial T}} \ppsi_{\partial T,j}.
\end{align*}
We collect its main properties.
\begin{lemma}\label{lem:fortinDiv:normal}
  Operator $\PiF^{\div} = \PiFptilde^{\div}$ satisfies~\eqref{eq:fortinDiv:a} and
  \begin{align*}
    \norm{\PiFptilde^{\div}\ttau}T &\lesssim \norm{\ttau}T + h_T\norm{\div\ttau}{T} \quad \forall \ttau\in\Hdivset{T}.
  \end{align*}
\end{lemma}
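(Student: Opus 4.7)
The plan is to verify the two claims separately: property \eqref{eq:fortinDiv:a} follows directly from the duality between $\ppsi_{\partial T,j}\cdot\normal_T$ and $\nu_{\partial T,j}$ built into the definition of the operator, while the boundedness estimate is obtained by estimating the coefficients of $\PiFptilde^{\div}\ttau$ via the integration by parts identity \eqref{eq:dualityrelation} and scaling.

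For~\eqref{eq:fortinDiv:a}, I would first observe that the traces $\{\widetilde\nu_{\partial T,k}|_{\partial T}\}$ (equivalently $\{\nu_{\partial T,k}|_{\partial T}\}$) form a basis of $P_c^{p+1}(\FF_T)$, since the restriction map $\widetilde P^{p+1}(T)\to P_c^{p+1}(\FF_T)$ is bijective by construction of $\widetilde P^{p+1}(T)$ as the orthogonal complement of $P_b^{p+1}(T)$. Thus it suffices to test against $\nu_{\partial T,k}$. Using linearity and the orthogonality $\dual{\ppsi_{\partial T,j}\cdot\normal_T}{\nu_{\partial T,k}}_{\partial T}=|\partial T|\delta_{j,k}$, all but the $j=k$ term drops, and that term equals $\dual{\ttau\cdot\normal_T}{\nu_{\partial T,k}}_{\partial T}$, which is exactly what we need.

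For the boundedness, I would apply the triangle inequality to the definition of $\PiFptilde^{\div}\ttau$ to get
\begin{align*}
\norm{\PiFptilde^{\div}\ttau}{T}\leq\sum_j\frac{|\dual{\ttau\cdot\normal_T}{\nu_{\partial T,j}}_{\partial T}|}{|\partial T|}\,\norm{\ppsi_{\partial T,j}}{T},
\end{align*}
and then bound the boundary duality pairing by the integration by parts identity~\eqref{eq:dualityrelation}:
\begin{align*}
|\dual{\ttau\cdot\normal_T}{\nu_{\partial T,j}}_{\partial T}|\leq\norm{\div\ttau}{T}\norm{\nu_{\partial T,j}}{T}+\norm{\ttau}{T}\norm{\nabla\nu_{\partial T,j}}{T}.
\end{align*}
Now I would invoke the scaling estimates $\norm{\nu_{\partial T,j}}{T}\lesssim|T|^{1/2}$ (from $\|\nu_{\partial T,j}\|_\infty\eqsim 1$), the inverse estimate $\norm{\nabla\nu_{\partial T,j}}{T}\lesssim h_T^{-1}|T|^{1/2}$, and $\norm{\ppsi_{\partial T,j}}{T}\lesssim|T|^{1/2}$ (since $\ppsi_{\partial T,j}=\ppsi_{\partial T}\widetilde\nu_{\partial T,j}$ with $\|\ppsi_F\|_T\eqsim|T|^{1/2}$ and $\|\widetilde\nu_{\partial T,j}\|_\infty\eqsim 1$). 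Combining these with $|T|/|\partial T|\eqsim h_T$ yields
\begin{align*}
\frac{|\dual{\ttau\cdot\normal_T}{\nu_{\partial T,j}}_{\partial T}|}{|\partial T|}\,\norm{\ppsi_{\partial T,j}}{T}\lesssim\frac{|T|}{|\partial T|}\bigl(\norm{\div\ttau}{T}+h_T^{-1}\norm{\ttau}{T}\bigr)\eqsim h_T\norm{\div\ttau}{T}+\norm{\ttau}{T},
\end{align*}
and summing over the fixed finite index range proves the asserted bound.

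I do not anticipate a real obstacle: this lemma mirrors the $H^1$ case in Lemma~\ref{lem:fortin:normal} but uses the dual Raviart--Thomas-style basis $\ppsi_{\partial T,j}$ and the integration by parts identity \eqref{eq:dualityrelation} in place of the $L^2$ projection and the trace inequality. The only subtle point is to remember to read the boundary pairing through~\eqref{eq:dualityrelation} before applying the volumetric inverse and scaling estimates, so that the bound lands with the expected factor $h_T$ in front of $\norm{\div\ttau}{T}$.
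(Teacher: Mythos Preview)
Your proposal is correct and follows essentially the same approach as the paper's proof: verifying~\eqref{eq:fortinDiv:a} via the biorthogonality $\dual{\ppsi_{\partial T,j}\cdot\normal_T}{\nu_{\partial T,k}}_{\partial T}=|\partial T|\delta_{j,k}$, and obtaining the boundedness by rewriting the boundary duality through~\eqref{eq:dualityrelation} and applying the scaling estimates $\norm{\nu_{\partial T,j}}T\lesssim|T|^{1/2}$, $\norm{\nabla\nu_{\partial T,j}}T\lesssim h_T^{-1}|T|^{1/2}$, $\norm{\ppsi_{\partial T,j}}T\lesssim|T|^{1/2}$ together with $|T|/|\partial T|\eqsim h_T$. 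Your additional remark that the traces $\nu_{\partial T,k}|_{\partial T}$ span $P_c^{p+1}(\FF_T)$ makes explicit a point the paper leaves implicit.
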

\begin{proof}
  To see~\eqref{eq:fortinDiv:a} a simple computation yields
  \begin{align*}
    \dual{(1-\PiFptilde^{\div})\ttau\cdot\normal_T}{\nu_{\partial T,k}}_{\partial T}
    &= \dual{\ttau\cdot\normal_T}{\nu_{\partial T,k}}_{\partial T} - 
    \sum_{j}\frac{\dual{\ttau\cdot\normal_T}{\nu_{\partial T,j}}_{\partial T}}{\dual{\ppsi_{\partial T,j}\cdot\normal_T}{\nu_{\partial T,j}}_{\partial T}} \dual{\ppsi_{\partial T,j}\cdot\normal_T}{\nu_{\partial T,k}}_{\partial T}
    \\
    &= \dual{\ttau\cdot\normal_T}{\nu_{\partial T,k}}_{\partial T} - \dual{\ttau\cdot\normal_T}{\nu_{\partial T,k}}_{\partial T} = 0.
  \end{align*}
  Resolving the duality term, the Cauchy--Schwarz inequality and properties of basis functions give
  \begin{align*}
    |\dual{\ttau\cdot\normal_T}{\nu_{\partial T,j}}_{\partial T}| &= |\ip{\ttau}{\nabla \nu_{\partial T,j}}_T + \ip{\div\ttau}{\nu_{\partial T,j}}_T| 
    \\
    &\lesssim \norm{\ttau}Th_T^{-1}|T|^{1/2} + \norm{\div\ttau}T|T|^{1/2}.
  \end{align*}
  With the triangle inequality we conclude that
  \begin{align*}
    \norm{\PiFptilde^{\div}\ttau}T &\leq \sum_{j} \frac{|\dual{\ttau\cdot\normal_T}{\nu_{\partial T,j}}_{\partial T}|}{\dual{\ppsi_{\partial T,j}\cdot\normal_T}{\nu_{\partial T,j}}_{\partial T}} \norm{\ppsi_{\partial T,j}}T
    \\
    &\lesssim |\partial T|^{-1} (\norm{\ttau}Th_T^{-1}|T|^{1/2} + \norm{\div\ttau}T|T|^{1/2})|T|^{1/2} \lesssim \norm{\ttau}T + h_T\norm{\div\ttau}T
  \end{align*}
  which finishes the proof.
\end{proof}

For the definition of operators that ensure~\eqref{eq:fortinDiv:b} we add a correction term based on the functions $\eeta_{E,j}$. We set
\begin{align*}
  V_{hp}^{\div} = \widetilde V_{hp}^{\div} + \linhull\set{\eeta_{E,j}}{j=1,\dots,\dim P^p(T),\, E\in \EE_*}
\end{align*}
and define $\PiFp^{\div}\colon \Hdivset{T}\to V_{hp}^{\div}$ for all $\ttau\in \Hdivset{T}$ by
\begin{align*}
  \PiFp^{\div}\ttau = \PiFptilde^{\div}\ttau + \sum_{E\in\EE_*} \sum_{j=1}^{\dim(P^p(T))} \frac{\ip{\ssigma_{E,j}}{(1-\PiFptilde^{\div})\ttau}_T}{\ip{\ssigma_{E,j}}{\eeta_{E,j}}_T} \eeta_{E,j}.
\end{align*}

\begin{theorem}\label{thm:fortinDiv:normal}
  Operator $\PiF^{\div} = \PiFp^{\div}$ satisfies~\eqref{eq:fortinDiv:a}--\eqref{eq:fortinDiv:b} and
  \begin{align*}
    \norm{\PiFp^{\div}\ttau}T \lesssim \norm{\ttau}T + h_T\norm{\div\ttau}T \quad\forall \ttau\in \Hdivset{T}.
  \end{align*}
  Furthermore, $\div\circ\PiFp^{\div} = \Pi_T^{p+1}\circ\div$.
  If $h_T\lesssim \alpha$, then $\PiF^{\div} = \PiFp^{\div}$ satisfies~\eqref{eq:fortinDiv:bound}.
\end{theorem}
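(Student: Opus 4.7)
The proof will follow the template of Theorem~\ref{thm:fortin:normal}: first verify the two moment conditions, then establish $L^2$-boundedness, then deduce the divergence commutativity and the parameter-dependent bound. All four conclusions exploit the dual-basis structure of the basis functions listed in Table~\ref{tab:basis}.

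For~\eqref{eq:fortinDiv:a}, the plan is to observe that the correction functions $\eeta_{E,j}=\eeta_E\widetilde\chi_{T,j}$ inherit a vanishing normal trace from $\eeta_E\cdot\normal_T|_{\partial T}=0$ (Lemma~\ref{lem:rotatedBR}). Hence $(\PiFp^{\div}\ttau)\cdot\normal_T=(\PiFptilde^{\div}\ttau)\cdot\normal_T$ on $\partial T$, and Lemma~\ref{lem:fortinDiv:normal} delivers~\eqref{eq:fortinDiv:a} at once. For~\eqref{eq:fortinDiv:b}, note that $\{\ssigma_{E,k}\colon E\in\EE_*,\,k=1,\dots,\dim P^p(T)\}$ spans $\PP^p(T)$, because $\{\ssigma_E\}_{E\in\EE_*}$ is a basis of $\PP^0(T)$ and $\{\chi_{E,k}\}_k$ is a basis of $P^p(T)$. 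Testing $(1-\PiFp^{\div})\ttau$ against $\ssigma_{E,k}$ and using the orthogonality $\ip{\ssigma_{E,k}}{\eeta_{E',j}}_T=\delta_{E,E'}\delta_{k,j}|T|$ collapses the double sum to a single term which exactly cancels $\ip{\ssigma_{E,k}}{(1-\PiFptilde^{\div})\ttau}_T$, giving~\eqref{eq:fortinDiv:b}.

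The boundedness is then a routine Cauchy--Schwarz argument. Using the scalings $\norm{\ssigma_{E,j}}T\eqsim|T|^{1/2}$, $\norm{\eeta_{E,j}}T\lesssim|T|^{1/2}$, and $\ip{\ssigma_{E,j}}{\eeta_{E,j}}_T\eqsim|T|$ from Section~\ref{sec:ho}, each correction term is bounded by $\norm{(1-\PiFptilde^{\div})\ttau}T$, which by the triangle inequality and Lemma~\ref{lem:fortinDiv:normal} is $\lesssim\norm{\ttau}T+h_T\norm{\div\ttau}T$. For the divergence identity, we exploit that $V_{hp}^{\div}\subset\PP^{p+2}(T)$, so $\div\PiFp^{\div}\ttau\in P^{p+1}(T)$; on the other hand, the integration-by-parts derivation of~\eqref{eq:fortinDiv:c} (carried out in the text) combines~\eqref{eq:fortinDiv:a} and~\eqref{eq:fortinDiv:b} to yield $\ip{u}{\div(\ttau-\PiFp^{\div}\ttau)}_T=0$ for every $u\in P^{p+1}(T)$. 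These two facts together force $\div\PiFp^{\div}\ttau=\Pi_T^{p+1}\div\ttau$.

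Finally, if $h_T\lesssim\alpha$, then $h_T\norm{\div\ttau}T\lesssim\alpha\norm{\div\ttau}T$, so the $L^2$-bound yields $\norm{\PiFp^{\div}\ttau}T\lesssim\norm{\ttau}T+\alpha\norm{\div\ttau}T$, while $\norm{\div\PiFp^{\div}\ttau}T=\norm{\Pi_T^{p+1}\div\ttau}T\leq\norm{\div\ttau}T$; combining these gives~\eqref{eq:fortinDiv:bound}. I expect no serious obstacle: the dual-basis identities in Section~\ref{sec:ho} were designed precisely so that the moment cancellation is automatic, and the only quantitative input is the scaling of $\norm{\eeta_{E,j}}T$ and $\ip{\ssigma_{E,j}}{\eeta_{E,j}}_T$, both already recorded. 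The one subtle point worth checking carefully is that $\{\ssigma_{E,k}\}$ really spans $\PP^p(T)$ rather than merely a proper subspace — a dimension count plus the linear independence inherited from $\{\ssigma_E\}$ and $\{\chi_{E,k}\}$ confirms this.
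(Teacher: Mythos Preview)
Your proposal is correct and follows essentially the same route as the paper: vanishing normal trace of $\eeta_{E,j}$ reduces~\eqref{eq:fortinDiv:a} to Lemma~\ref{lem:fortinDiv:normal}, the dual-basis identity $\ip{\ssigma_{E,j}}{\eeta_{E',k}}_T=|T|\delta_{E,E'}\delta_{j,k}$ collapses the sum to give~\eqref{eq:fortinDiv:b}, the Cauchy--Schwarz/scaling argument yields the $L^2$-bound, and the commutativity plus $h_T\lesssim\alpha$ finish the parameter-dependent estimate. Your explicit remark that $\{\ssigma_{E,k}\}$ spans $\PP^p(T)$ is a point the paper leaves implicit but is indeed needed.
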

\begin{proof}
  Since $\eeta_{E,j}\cdot\normal_T|_{\partial T} = 0$, we get $\PiFp^{\div}\ttau\cdot\normal_T|_{\partial T} =  \PiFptilde^{\div}\ttau\cdot\normal_T|_{\partial T}$, thus, condition~\eqref{eq:fortinDiv:a} follows from Lemma~\ref{lem:fortinDiv:normal}.
  To see~\eqref{eq:fortinDiv:b} we compute for any $E',k$
  \begin{align*}
    \ip{\ssigma_{E',k}}{(1-\PiFp^{\div})\ttau}_T &= \ip{\ssigma_{E',k}}{(1-\PiFptilde^{\div})\ttau}_T - \sum_{E\in\EE_*}\sum_{j} \frac{\ip{\ssigma_{E,j}}{(1-\PiFptilde^{\div})\ttau}_T}{\ip{\ssigma_{E,j}}{\eeta_{E,j}}_T} \ip{\ssigma_{E',k}}{\eeta_{E,j}}_T
    \\
    &= \ip{\ssigma_{E',k}}{(1-\PiFptilde^{\div})\ttau}_T -\ip{\ssigma_{E',k}}{(1-\PiFptilde^{\div})\ttau}_T =0.
  \end{align*}
  Noting that properties of the basis functions and boundedness by Lemma~\ref{lem:fortinDiv:normal} give
  \begin{align*}
    \frac{|\ip{\ssigma_{E,j}}{(1-\PiFptilde^{\div})\ttau}_T|}{\ip{\ssigma_{E,j}}{\eeta_{E,j}}_T} \norm{\eeta_{E,j}}T 
    \lesssim \norm{(1-\PiFptilde^{\div})\ttau}T \lesssim \norm{\ttau}T + h_T\norm{\div\ttau}T,
  \end{align*}
  we see that
  \begin{align*}
    \norm{\PiFp^{\div}\ttau}T \leq \norm{\PiFptilde^{\div}\ttau}T + \sum_{E\in\EE_*} \sum_{j} \frac{|\ip{\ssigma_{E,j}}{(1-\PiFptilde^{\div})\ttau}_T|}{\ip{\ssigma_{E,j}}{\eeta_{E,j}}_T} \norm{\eeta_{E,j}}T \lesssim \norm{\ttau}T + h_T\norm{\div\ttau}T.
  \end{align*}
  The commutativity property follows from~\eqref{eq:fortinDiv:c} and the fact that $V_{hp}^{\div}\subseteq \PP^{p+2}(T)$ and, therefore, $\div\PiFp^{\div}\ttau\in P^{p+1}(T)$.
  
  For the final assertion note that $\norm{\div\PiFp^{\div}\ttau}T=\norm{\Pi_T^{p+1}\div\ttau}T \leq \norm{\div\ttau}T$ by the commutativity property. Together with boundedness in the $L^2(T)$ norm established above, this finishes the proof.
\end{proof}

\begin{remark}\label{rem:supconv:div}
  In general, discrete test spaces are chosen such that a Fortin operator exists, which not necessarily implies approximation results of the form
  \begin{align*}
    \min_{\ttau_h\in V_h} \norm{\ttau-\ttau_h}{T} + \norm{\div(\ttau-\ttau_h)}T \lesssim h_T \norm{\nabla\ttau}{T} + \norm{(1-\Pi_T^0)\div\ttau}T \quad\text{for } \ttau \in \HH^1(T). 
  \end{align*}
  For certain supercloseness results in the DPG method the latter approximation property is required, see~\cite{SupConv2}.
  To ensure this property one can simply require $\RT^0(T)\subset V_h$.
\end{remark}

\subsection{Construction for small parameter}
For a small parameter, i.e., $\alpha\lesssim h_T$, we build a Fortin operator quite a bit different to the ones presented in the previous section. 
The proof of boundedness requires as in the scalar case the multiplicative version of the trace inequality, but also the following Helmholtz decomposition together with elliptic regularity.
\begin{lemma}\label{lem:helmholtz}
  Let $\ttau\in \Hdivset{T}$. There exist $r\in H_0^1(T)$, $\qq\in \HH(\ccurl;T)$ (for $n=2$ we have $\qq\in H^1(T)$)
  such that
  $\ttau = \nabla r + \ccurl\qq$  and 
  \begin{align*}
    \norm{\ccurl \qq}T^2 + \norm{\nabla r}T^2 = \norm{\ttau}{T}^2, \quad \norm{D^2r}{T} \lesssim \norm{\div\ttau}T
  \end{align*}
  with constants independent of $T$. 
\end{lemma}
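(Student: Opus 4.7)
The plan is to construct $r$ and $\qq$ via the standard orthogonal decomposition and then verify the estimates by a reference-element argument.

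First, I would define $r\in H_0^1(T)$ as the unique weak solution of the Dirichlet--Poisson problem
\begin{align*}
  \ip{\nabla r}{\nabla v}_T = \ip{\ttau}{\nabla v}_T \quad\forall v\in H_0^1(T),
\end{align*}
i.e., $r$ is chosen so that $\nabla r$ is the $\LL^2(T)$-projection of $\ttau$ onto $\nabla H_0^1(T)$. Setting $\uu := \ttau-\nabla r$, the Euler--Lagrange condition yields $\div\uu=0$ in the distributional sense on $T$. Since $T$ is a simplex and hence simply connected, the exact-sequence/Poincar\'e lemma produces a potential $\qq$ with $\ccurl\qq = \uu$; for $n=2$ this potential is a scalar in $H^1(T)$ (up to additive constant) and for $n=3$ a vector field in $\HH(\ccurl;T)$. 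This gives the decomposition $\ttau=\nabla r+\ccurl\qq$.

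Next, I would derive the orthogonality by integration by parts: since $r\in H_0^1(T)$ and $\div\ccurl\qq = 0$,
\begin{align*}
  \ip{\nabla r}{\ccurl\qq}_T = -\ip{r}{\div\ccurl\qq}_T + \dual{\ccurl\qq\cdot\normal_T}{r}_{\partial T} = 0,
\end{align*}
(with the appropriate interpretation of the boundary pairing in the $n=3$ case). Applying the Pythagorean identity then gives $\norm{\nabla r}T^2+\norm{\ccurl\qq}T^2=\norm{\ttau}T^2$.

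Finally, for the regularity estimate, observe that in the weak formulation $r$ solves $-\Delta r = -\div\ttau$ with homogeneous Dirichlet data (this is the strong form of the variational problem when $\div\ttau\in L^2(T)$). I would transport the problem to the reference simplex $\Tref$ via the affine map $A_T$. On $\Tref$, which is a fixed convex polygon ($n=2$) or convex tetrahedron ($n=3$), the Poisson problem with homogeneous Dirichlet data admits full $H^2$-regularity by Grisvard's results, so $\norm{\widehat{D}^2\widehat{r}}{\Tref}\lesssim\norm{\widehat{\Delta r}}{\Tref}$ with a universal constant. A standard scaling calculation using shape-regularity (so that the singular values of the Jacobian $B_T$ of $A_T$ scale like $h_T$) then yields $\norm{D^2 r}T\lesssim\norm{\div\ttau}T$ with a constant depending only on the shape of $T$, as required.

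The main obstacle is the last step: one must keep careful track of the $h_T$-powers introduced by the affine change of variables when transferring the $H^2$-estimate between $\Tref$ and $T$, and rely on shape-regularity to absorb the dependence on the element geometry into a constant independent of $T$. Everything else (the solvability of the Poisson problem, the existence of $\qq$, and the orthogonality) is routine once the decomposition is set up as above.
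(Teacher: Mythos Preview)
Your construction of $r$ and $\qq$ and the orthogonality argument coincide with the paper's proof. The only divergence is in how you obtain the domain-independent constant in $\norm{D^2 r}T\lesssim\norm{\Delta r}T$. The paper applies Grisvard's estimate \emph{directly on $T$}: since every simplex is convex, \cite[Theorem~3.1.1.2]{grisvard} gives $\sum_{|\beta|=2}\norm{D^\beta r}T\le C\norm{\Delta r}T$ with a constant $C$ that does not depend on the domain at all (this is the Kadlec/Miranda--Talenti inequality for convex sets). No reference-element transfer is needed.

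Your scaling route can be made to work, but be aware of a subtlety you glossed over: pulling back by the affine map $A_T$ does \emph{not} send the Poisson problem on $T$ to the Poisson problem on $\Tref$. The operator becomes $-\div(A\,\widehat\nabla\,\cdot\,)$ with $A=B_T^{-1}B_T^{-\top}$, so on $\Tref$ you need $H^2$-regularity for this family of constant-coefficient elliptic operators, with constants depending only on the ellipticity ratio of $A$ (which shape-regularity controls). That is standard, but it is more than ``the Poisson problem on $\Tref$'' and more work than the paper's one-line citation.
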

\begin{proof}
  Define $r\in H_0^1(T)$ as the weak solution of $\Delta r = \div\ttau$, $r|_{\partial T} = 0$. Elliptic regularity implies (note that $\div\ttau\in L^2(T)$ and $T$ is convex) that $r\in H^2(T)$. Moreover,~\cite[Theorem~3.1.1.2]{grisvard} implies that 
  \begin{align*}
    \sum_{|\beta|=2} \norm{D^\beta r}{T} \leq C \norm{\Delta r}{T}
  \end{align*}
  with a constant independent of $T$.
  Finally, $\div(\ttau-\nabla r) = 0$ by construction implies that $\ttau-\nabla r = \ccurl \qq$ for some $\qq\in \HH(\ccurl;T)$ for $n=3$ resp. $\qq\in H^1(T)$ for $n=2$.
\end{proof}

The definition and analysis of our Fortin operator is based on the Helmholtz decomposition $\ttau = \nabla r + \ccurl\qq$. For the $\ccurl\qq$ contribution we use the Fortin operator defined in the previous section. For $\nabla r$ we consider the following definitions and analysis: Define the space
\begin{align*}
  \widetilde V_{hp,\alpha}^{\div,\mathrm{aux}} = \PP^0(T) + \linhull\set{\eeta_{\alpha,F,j}}{j=1,\dots,\dim(P_c^{p+1}(\partial T))}
\end{align*}
and operator $\PiFptilde^{\div,\mathrm{aux}}\colon \Hdivset{T}\to \widetilde V_{hp,\alpha}^{\div,\mathrm{aux}}$ for all $\ttau\in \Hdivset{T}$ by
\begin{align*}
  \PiFaptilde^{\div,\mathrm{aux}}\ttau &= \Pi_T^0\ttau + \sum_{j=1}^{\dim(P_c^{p+1}(\FF_T))} 
  \frac{\dual{(1-\Pi_T^0)\ttau\cdot\normal_T}{\chi_{\partial T,j}}_{\partial T}}{\dual{\eeta_{\alpha,\partial T,j}\cdot\normal_T}{\chi_{\partial T,j}}_{\partial T}} \eeta_{\alpha,\partial T,j}.
\end{align*}

\begin{lemma}\label{lem:fortinDivAux}
  Operator $\PiF^{\div} = \PiFaptilde^{\div,\mathrm{aux}}$ satisfies~\eqref{eq:fortinDiv:a}.
  If $\alpha\lesssim h_T$, then
  \begin{align*}
    \norm{\PiFaptilde^{\div,\mathrm{aux}}\nabla r}{T,\alpha} \lesssim \norm{\nabla r}{T,\alpha}
  \end{align*}
  for all $r\in H_0^1(T)\cap H^2(T)$.
\end{lemma}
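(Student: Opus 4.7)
The plan for~\eqref{eq:fortinDiv:a} is a direct computation that mirrors Lemma~\ref{lem:fortin:normal}. Since $\{\chi_{\partial T,k}\}$ is a basis of $P_c^{p+1}(\FF_T)$, it suffices to test against each $\chi_{\partial T,k}$; the orthogonality $\dual{\eeta_{\alpha,\partial T,j}\cdot\normal_T}{\chi_{\partial T,k}}_{\partial T}=|\partial T|\delta_{j,k}$ then collapses the sum in the definition of $\PiFaptilde^{\div,\mathrm{aux}}$ to a single term, so that $\dual{\PiFaptilde^{\div,\mathrm{aux}}\ttau\cdot\normal_T}{\chi_{\partial T,k}}_{\partial T}=\dual{\ttau\cdot\normal_T}{\chi_{\partial T,k}}_{\partial T}$.

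For the boundedness I shall write $\PiFaptilde^{\div,\mathrm{aux}}\nabla r=\Pi_T^0\nabla r+\sum_j c_j\,\eeta_{\alpha,\partial T,j}$ with $c_j:=|\partial T|^{-1}\dual{(1-\Pi_T^0)\nabla r\cdot\normal_T}{\chi_{\partial T,j}}_{\partial T}$. The piecewise constant piece is trivially controlled, since $\norm{\Pi_T^0\nabla r}T\leq\norm{\nabla r}T$ and $\div\Pi_T^0\nabla r=0$. For the remaining sum, the Cauchy--Schwarz inequality on $\partial T$ together with $\norm{\chi_{\partial T,j}}{\partial T}\lesssim|\partial T|^{1/2}$ gives $|c_j|\lesssim|\partial T|^{-1/2}\norm{(1-\Pi_T^0)\nabla r}{\partial T}$. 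Inserting the scalings from Lemma~\ref{lem:modRB:ho}, namely $\norm{\eeta_{\alpha,\partial T,j}}T\lesssim|T|^{1/2}(\alpha/h_T)^{1/2}$ and $\norm{\div\eeta_{\alpha,\partial T,j}}T\lesssim|T|^{1/2}h_T^{-1}(h_T/\alpha)^{1/2}$, a short bookkeeping shows that both $|c_j|\norm{\eeta_{\alpha,\partial T,j}}T$ and $\alpha|c_j|\norm{\div\eeta_{\alpha,\partial T,j}}T$ reduce to the \emph{same} quantity $\alpha^{1/2}\norm{(1-\Pi_T^0)\nabla r}{\partial T}$; the extra $\alpha$ cancels the $\alpha^{-1/2}$ built into the divergence bound.

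It remains to estimate $\alpha^{1/2}\norm{(1-\Pi_T^0)\nabla r}{\partial T}$ by $\norm{\nabla r}{T,\alpha}$. Applying the multiplicative trace inequality~\eqref{eq:traceineq} component-wise (each component of $\nabla r$ lies in $H^1(T)$ since $r\in H^2(T)$) yields $\norm{(1-\Pi_T^0)\nabla r}{\partial T}\lesssim\norm{(1-\Pi_T^0)\nabla r}T^{1/2}\norm{D^2 r}T^{1/2}$, and the elliptic regularity bound $\norm{D^2 r}T\lesssim\norm{\div\nabla r}T$ from Lemma~\ref{lem:helmholtz}, which is where the hypothesis $r\in H_0^1(T)\cap H^2(T)$ is used, lets me replace $D^2 r$ by $\div\nabla r$. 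Young's inequality applied to the product $\norm{(1-\Pi_T^0)\nabla r}T^{1/2}\cdot(\alpha\norm{\div\nabla r}T)^{1/2}$ then closes the estimate.

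The main obstacle I expect is the scaling bookkeeping: the $(\alpha/h_T)^{1/2}$ factor in $\eeta_{\alpha,\partial T,j}$ and the $(h_T/\alpha)^{1/2}$ in its divergence must interact with the $h_T^{1/2}$ produced by the multiplicative trace inequality so that the final estimate depends only on $\alpha$ rather than on the ratio $h_T/\alpha$, yielding a \emph{uniformly} bounded Fortin operator. The restriction of the statement to the gradient part of a Helmholtz decomposition (instead of to a general $\ttau\in\Hdivset T$) is driven by precisely this need for elliptic regularity.
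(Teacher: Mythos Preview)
Your proposal is correct and follows essentially the same route as the paper: verify~\eqref{eq:fortinDiv:a} via the duality relation of the $\chi_{\partial T,k}$ basis, then bound each coefficient by Cauchy--Schwarz on $\partial T$, insert the scalings of Lemma~\ref{lem:modRB:ho}, apply the multiplicative trace inequality component-wise to $(1-\Pi_T^0)\nabla r$, replace $\|D^2r\|_T$ by $\|\div\nabla r\|_T$ via elliptic regularity, and close with Young's inequality. The paper's proof is organized identically; your scaling bookkeeping (both the $L^2$ and the divergence contributions collapsing to $\alpha^{1/2}\|(1-\Pi_T^0)\nabla r\|_{\partial T}$) matches exactly.
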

\begin{proof}
  The verification of~\eqref{eq:fortinDiv:a} follows similarly as in Lemma~\ref{lem:fortinDiv:normal}. We leave the details to the reader and focus on details for the proof of boundedness.
  Let $r\in H_0^1(T)\cap H^2(T)$.
  Note that elliptic regularity yields $\norm{D^2r}{T} \lesssim \norm{\div\nabla r}{T}$ with constant independent of $T$. 
  Using standard norm estimates, the multiplicative version of the trace inequality (see Lemma~\ref{lem:traceineq}) and Lemma~\ref{lem:modRB:ho}, we get 
  \begin{align*}
    \frac{|\dual{(1-\Pi_T^0)\nabla r\cdot\normal_T}{\chi_{\partial T,j}}_{\partial T}|}{\dual{\eeta_{\alpha,\partial T,j}\cdot\normal_T}{\chi_{\partial T,j}}_{\partial T}} \norm{\eeta_{\alpha,\partial T,j}}T 
    &\lesssim \norm{(1-\Pi_T^0)\nabla r}{\partial T} |\partial T|^{-1/2}|T|^{1/2} (\alpha/h_T)^{1/2}
    \\
    &\lesssim  \norm{(1-\Pi_T^0)\nabla r}T^{1/2} \alpha^{1/2} \norm{D^2r}{T}^{1/2}
    \\
    &\lesssim \norm{(1-\Pi_T^0)\nabla r}T + \alpha\norm{\Delta r}{T} \leq \norm{\nabla r}T + \alpha\norm{\Delta r}T.
  \end{align*}
  By summing over all indices and bounding $\norm{\Pi_T^0\nabla r}T \leq \norm{\nabla r}T$ we find that
  \begin{align*}
    \norm{\PiFaptilde^{\div,\mathrm{aux}}\nabla r}T \lesssim \norm{\Pi_T^0\nabla r}T  + \norm{\nabla r}T + \alpha \norm{\Delta r}T \lesssim \norm{\nabla r}T + \alpha \norm{\div\nabla r}T.
  \end{align*}
  The same argumentation also proves
  \begin{align*}
    \alpha\frac{|\dual{(1-\Pi_T^0)\nabla r\cdot\normal_T}{\chi_{\partial T,j}}_{\partial T}|}{\dual{\eeta_{\alpha,\partial T,j}\cdot\normal_T}{\chi_{\partial T,j}}_{\partial T}} \norm{\div\eeta_{\alpha,\partial T,j}}T 
    &\lesssim \alpha \norm{(1-\Pi_T^0)\nabla r}{\partial T} |\partial T|^{-1/2}|T|^{1/2} h_T^{-1}(\alpha/h_T)^{-1/2}
    \\
    &\lesssim  \norm{(1-\Pi_T^0)\nabla r}T^{1/2} \alpha^{1/2} \norm{D^2r}{T}^{1/2}
    \\
    &\lesssim \norm{(1-\Pi_T^0)\nabla r}T + \alpha\norm{\div\nabla r}{T} 
    \\ &\leq \norm{\nabla r}T + \alpha\norm{\div\nabla r}T.
  \end{align*}
  Summing over all indices and using $\div\Pi_T^0\nabla r = 0$ we conclude that
  \begin{align*}
    \alpha\norm{\div\PiFaptilde^{\div,\mathrm{aux}}\nabla r}T \lesssim \norm{\nabla r}T + \alpha \norm{\div\nabla r}T.
  \end{align*}
  Combining all estimates finishes the proof.
\end{proof}

For the definition of operators that ensure~\eqref{eq:fortinDiv:b} we add --- as before --- a correction term based on the functions $\eeta_{E,j}$. We stress that these edge functions do not need to be modified. Set
\begin{align*}
  V_{hp,\alpha}^{\div} = V_{hp}^{\div} + \widetilde V_{hp}^{\div,\mathrm{aux}}
\end{align*}
and define $\PiFp^{\div}\colon \Hdivset{T}\to V_{hp}^{\div}$ for all $\ttau = \nabla r + \ccurl\qq\in \Hdivset{T}$ by
\begin{align*}
  \PiFap^{\div}\ttau = \PiFp^{\div}\ccurl\qq + \PiFaptilde^{\div,\mathrm{aux}}\nabla r + \sum_{E\in\EE_*} \sum_{j=1}^{\dim(P^p(T))} \frac{\ip{\ssigma_{E,j}}{(1-\PiFaptilde^{\div,\mathrm{aux}})\nabla r}_T}{\ip{\ssigma_{E,j}}{\eeta_{E,j}}_T} \eeta_{E,j}.
\end{align*}

The following is one of our main results.
\begin{theorem}\label{thm:fortinDiv}
  Operator $\PiF^{\div} = \PiFap^{\div}$ satisfies~\eqref{eq:fortinDiv:a}--\eqref{eq:fortinDiv:b}.
  If $\alpha\lesssim h_T$, then
  \begin{align*}
    \norm{\PiFap^{\div}\ttau}{T,\alpha} \lesssim \norm{\ttau}{T,\alpha} \quad\forall \ttau\in \Hdivset{T}.
  \end{align*}
\end{theorem}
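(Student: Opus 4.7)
The plan is to read $\PiFap^{\div}$ through the Helmholtz decomposition $\ttau = \nabla r + \ccurl\qq$ of Lemma~\ref{lem:helmholtz}, and to verify each property of \eqref{eq:fortinDiv} separately. The construction is designed so that $\PiFp^{\div}\ccurl\qq$ carries the divergence-free part (for which Theorem~\ref{thm:fortinDiv:normal} is uniformly bounded because $\div\ccurl\qq=0$), while $\PiFaptilde^{\div,\mathrm{aux}}\nabla r$ handles the gradient part through the modified face bubbles (Lemma~\ref{lem:fortinDivAux}); the edge-bubble correction then enforces the $\PP^p(T)$-moment condition without disturbing the normal trace or introducing unbounded divergence.

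For \eqref{eq:fortinDiv:a} I first note that $\eeta_{E,j}\cdot\normal_T|_{\partial T}=0$, so the edge correction is invisible to the normal trace. Theorem~\ref{thm:fortinDiv:normal} and Lemma~\ref{lem:fortinDivAux} give the property for $\PiFp^{\div}\ccurl\qq$ and $\PiFaptilde^{\div,\mathrm{aux}}\nabla r$ individually, and linearity in $\ttau$ closes it. For \eqref{eq:fortinDiv:b} I would use that $\{\ssigma_{E,j}\}_{E\in\EE_*,j}$ spans $\PP^p(T)$ by a dimension count (every $\ssigma\in\PP^p(T)$ expands pointwise in the $\ssigma_E$-basis of $\PP^0(T)$ with coefficients in $P^p(T)$, which in turn expands in $\chi_{E,j}$), so it suffices to test against $\ssigma_{E',k}$. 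The biorthogonality $\ip{\ssigma_{E',k}}{\eeta_{E,j}}_T=\delta_{E,E'}\delta_{j,k}|T|$ makes the edge-correction sum collapse to exactly $\ip{\ssigma_{E',k}}{(1-\PiFaptilde^{\div,\mathrm{aux}})\nabla r}_T$, and combining with property \eqref{eq:fortinDiv:b} for $\PiFp^{\div}$ (Theorem~\ref{thm:fortinDiv:normal}) applied to $\ccurl\qq$ produces $\ip{\ssigma_{E',k}}{\PiFap^{\div}\ttau}_T = \ip{\ssigma_{E',k}}{\ccurl\qq+\nabla r}_T = \ip{\ssigma_{E',k}}{\ttau}_T$.

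For the uniform boundedness under $\alpha\lesssim h_T$ I apply the triangle inequality in $\norm\cdot{T,\alpha}$ to the three summands. The first summand has $\div\PiFp^{\div}\ccurl\qq = \Pi_T^{p+1}(\div\ccurl\qq)=0$ by the commutativity in Theorem~\ref{thm:fortinDiv:normal}, so its $\norm\cdot{T,\alpha}$-norm reduces to its $L^2(T)$-norm and is bounded by $\norm{\ccurl\qq}T\leq\norm{\ttau}{T,\alpha}$ via Lemma~\ref{lem:helmholtz}. The second summand is estimated directly by Lemma~\ref{lem:fortinDivAux} as $\lesssim\norm{\nabla r}{T,\alpha}$, which is $\lesssim\norm{\ttau}{T,\alpha}$ using $\div\nabla r=\div\ttau$ and Lemma~\ref{lem:helmholtz}. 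For the edge correction, the standard scalings $\norm{\ssigma_{E,j}}T\lesssim|T|^{1/2}$, $\ip{\ssigma_{E,j}}{\eeta_{E,j}}_T\eqsim|T|$, $\norm{\eeta_{E,j}}T\eqsim|T|^{1/2}$, combined with the inverse estimate $\norm{\div\eeta_{E,j}}T\lesssim h_T^{-1}|T|^{1/2}$ and the hypothesis $\alpha\lesssim h_T$, give $\norm{\text{correction}}{T,\alpha}\lesssim\norm{(1-\PiFaptilde^{\div,\mathrm{aux}})\nabla r}T\lesssim\norm{\nabla r}{T,\alpha}$.

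The main obstacle is actually already absorbed into Lemma~\ref{lem:fortinDivAux}, whose proof requires the elliptic regularity bound $\norm{D^2 r}T\lesssim\norm{\div\ttau}T$ from Lemma~\ref{lem:helmholtz} to tame the modified face bubbles in the regime $\alpha\lesssim h_T$. Given that lemma and Theorem~\ref{thm:fortinDiv:normal}, the present theorem becomes mostly algebraic bookkeeping: track the Helmholtz splitting, exploit the biorthogonality of $(\ssigma_{E,j},\eeta_{E,j})$, and absorb the potentially dangerous factor $h_T^{-1}$ from $\norm{\div\eeta_{E,j}}T$ via the assumption $\alpha\lesssim h_T$.
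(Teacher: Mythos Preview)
Your argument is correct and follows the same overall architecture as the paper: verify \eqref{eq:fortinDiv:a}--\eqref{eq:fortinDiv:b} via the normal-trace vanishing of $\eeta_{E,j}$ and the biorthogonality $\ip{\ssigma_{E',k}}{\eeta_{E,j}}_T=|T|\delta_{E,E'}\delta_{j,k}$, then split through the Helmholtz decomposition and handle $\PiFp^{\div}\ccurl\qq$ with Theorem~\ref{thm:fortinDiv:normal} and $\PiFaptilde^{\div,\mathrm{aux}}\nabla r$ with Lemma~\ref{lem:fortinDivAux}.

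The one genuine difference is your treatment of $\alpha\norm{\div(\text{edge correction})}T$. You bound it directly by the inverse estimate $\norm{\div\eeta_{E,j}}T\lesssim h_T^{-1}|T|^{1/2}$ and then absorb the factor $\alpha h_T^{-1}$ via the hypothesis $\alpha\lesssim h_T$, reducing to $\norm{(1-\PiFaptilde^{\div,\mathrm{aux}})\nabla r}T$. The paper instead observes that $\div\eeta_{E,j}\in P^{p+1}(T)$, so property~\eqref{eq:fortinDiv:c} (already established) gives $\ip{\div\eeta_{E,j}}{\div(1-\PiFap^{\div})\nabla r}_T=0$, whence $\norm{\div(1-\PiFap^{\div})\nabla r}T\leq\norm{\div(1-\PiFaptilde^{\div,\mathrm{aux}})\nabla r}T$ without invoking $\alpha\lesssim h_T$ at that step. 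Your route is more elementary and perfectly adequate here since $\alpha\lesssim h_T$ is assumed anyway; the paper's orthogonality trick is worth internalizing because it recurs (see the proofs of Theorems~\ref{thm:fortinDiv:normal:lo} and~\ref{thm:fortinDiv:lo}) and yields a bound on the edge-correction divergence that is independent of the inverse estimate.
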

\begin{proof}
  The verification of~\eqref{eq:fortinDiv:a}--\eqref{eq:fortinDiv:b} follows from the arguments already seen in Theorem~\ref{thm:fortinDiv:normal} and Lemma~\ref{lem:fortinDivAux}. 

  It only remains to prove boundedness.
  By the triangle inequality we get
  \begin{align*}
    \norm{\PiFap^{\div}\ttau}{T,\alpha} \leq \norm{\PiFap^{\div}\ccurl\qq}{T,\alpha} + \norm{\PiFap^{\div}\nabla r}{T,\alpha}
  \end{align*}
  for $\ttau=\nabla r + \ccurl \qq\in \Hdivset{T}$ where $r,\qq$ are defined as in Lemma~\ref{lem:helmholtz}.

  Note that $\PiFap^{\div}\ccurl\qq = \PiFp^{\div}\ccurl\qq$ and applying Theorem~\ref{thm:fortinDiv:normal} and Lemma~\ref{lem:helmholtz} we see that
  \begin{align*}
    \norm{\PiFp^{\div}\ccurl\qq}T \lesssim \norm{\ccurl\qq}T + h_T\norm{\div\ccurl\qq}T = \norm{\ccurl\qq}T \leq \norm{\ttau}T
  \end{align*}
  as well as $\div\PiFp^{\div}\ccurl\qq = \Pi_T^{p+1}\div\ccurl\qq = 0$. We conclude that
  \begin{align*}
  \norm{\PiFap^{\div}\ccurl\qq}{T,\alpha} \lesssim \norm{\ttau}T \leq \norm{\ttau}{T,\alpha}.
  \end{align*}
  It remains to estimate $\norm{\PiFap^{\div}\nabla r}{T,\alpha}$. Using the triangle inequality, the Cauchy--Schwarz inequality, estimates for basis functions and Lemma~\ref{lem:fortinDivAux}, we get
  \begin{align*}
    \norm{\PiFap^{\div}\nabla r}T &\leq \norm{\PiFaptilde^{\div,\mathrm{aux}}\nabla r}T + \sum_{E\in\EE_*}\sum_{j} \frac{|\ip{\ssigma_{E,j}}{(1-\PiFaptilde^{\div,\mathrm{aux}})\nabla r}_T|}{\ip{\ssigma_{E,j}}{\eeta_{E,j}}_T} \norm{\eeta_{E,j}}T
    \\
    & \lesssim \norm{\PiFaptilde^{\div,\mathrm{aux}}\nabla r}T +  |T|^{1/2}\norm{(1-\PiFaptilde^{\div,\mathrm{aux}})\nabla r}T |T|^{-1}|T|^{1/2}
    \lesssim \norm{\nabla r}{T,\alpha}.
  \end{align*}
  For the divergence part in the norm, recall that $\eeta_{E,j} \in \PP^{p+2}(T)$. Therefore, $\div\eeta_{E,j}\in P^{p+1}(T)$ and~\eqref{eq:fortinDiv:c} implies
  \begin{align*}
    \ip{\div\eeta_{E,j}}{\div(1-\PiFap^{\div})\nabla r}_T = 0, \quad j=1,\dots,\dim(P^p(T)), \, E\in\EE_*. 
  \end{align*}
  We conclude that
  \begin{align*}
    \norm{\div(1-\PiFap^{\div})\nabla r}T^2 &= \ip{\div(1-\PiFap^{\div})\nabla r}{\div(1-\PiFap^{\div})\nabla r}_T
    \\
    &= \ip{\div(1-\PiFaptilde^{\div,\mathrm{aux}})\nabla r}{\div(1-\PiFap^{\div})\nabla r}_T
    \\
    &\leq \norm{\div(1-\PiFaptilde^{\div,\mathrm{aux}})\nabla r}T\norm{\div(1-\PiFap^{\div})\nabla r}T.
  \end{align*}
  It follows that $\norm{\div(1-\PiFap^{\div})\nabla r}T\leq \norm{\div(1-\PiFaptilde^{\div,\mathrm{aux}})\nabla r}T$ and with the triangle inequality and Lemma~\ref{lem:fortinDivAux} we get that
  \begin{align*}
    \alpha\norm{\div\PiFap^{\div}\nabla r}T \leq \alpha\norm{\div\nabla r}T + \alpha\norm{\div(1-\PiFaptilde^{\div,\mathrm{aux}})\nabla r}T \lesssim \norm{\nabla r}{T,\alpha}
  \end{align*}
  which finishes the proof together with $\norm{\nabla r}{T,\alpha}\leq \norm{\ttau}{T,\alpha}$ (Lemma~\ref{lem:helmholtz}).
\end{proof}

\subsection{Alternative operator for lowest order and moderate parameter}\label{sec:fortinDiv:lo:normal}
First, consider the spaces 
\begin{align*}
  \widetilde V_h^{\div,1} = \RT^0(T), \quad \widetilde V_h^{\div,2} = \PP^0(T) + \linhull\set{\eeta_F}{F\in\FF_T}
\end{align*}
and operators $\PiFtilde^{\div,j}\colon \Hdivset{T}\to \widetilde V_h^{\div,j}$, $j=1,2$, for all $\ttau\in\Hdivset{T}$ by
\begin{subequations}\label{eq:def:PiDivTilde}
\begin{align}
  \PiFtilde^{\div,1}\ttau &= \sum_{F\in\FF_T} \frac{\dual{\ttau\cdot\normal_T}{\nu_F}_{\partial T}}{\dual{\ppsi_F\cdot\normal_T}{\nu_F}_{\partial T}} \ppsi_F,
  \\
  \PiFtilde^{\div,2}\ttau &= \Pi_T^0\ttau + \sum_{F\in\FF_T} \frac{\dual{(1-\Pi_T^0)\ttau\cdot\normal_T}{\nu_F}_{\partial T}}{\dual{\eeta_F\cdot\normal_T}{\nu_F}_{\partial T}} \eeta_F.
\end{align}
\end{subequations}
One verifies that $\PiFtilde^{\div,1}$ is a projector whereas $\PiFtilde^{\div,2}$ is idempotent on $\PP^0(T)$.
Defining the spaces
\begin{align*}
  V_h^{\div,j} = \widetilde V_h^{\div,j} + \linhull\set{\eeta_E}{E\in\EE_*}, \quad j=1,2,
\end{align*}
we introduce operators $\PiF^{\div,j}\colon V\to V_h^{\div,j}$, $j=1,2$, for all $\ttau\in\Hdivset{T}$ by
\begin{align*}
  \PiF^{\div,j} \ttau = \PiFtilde^{\div,j}\ttau + \sum_{E\in\EE_*} \frac{\ip{\ssigma_E}{(1-\PiFtilde^{\div,j})\ttau}_T}{\ip{\ssigma_E}{\eeta_E}_T} \eeta_E.
\end{align*}

\begin{theorem}\label{thm:fortinDiv:normal:lo}
  Operator $\PiF^{\div}\in\set{\PiF^{\div,j}}{j=1,2}$ satisfies~\eqref{eq:fortinDiv:a}--\eqref{eq:fortinDiv:b}
  and
  \begin{align*}
    \norm{\PiF^{\div}\ttau}T &\lesssim \norm{\ttau}T + h_T \norm{\div\ttau}T, \quad
    \norm{\div\PiF^{\div}\ttau}T \lesssim \norm{\div\ttau}T \quad\forall \ttau\in \Hdivset{T}.
  \end{align*}
  Moreover, $\div\circ \PiF^{\div,1} = \Pi_T^1\circ \div$.
  Furthermore, operator $\PiF^{\div,1}$ is idempotent on $\RT^0(T)$ and $\PiF^{\div,2}$ is idempotent on $\PP^0(T)$.
\end{theorem}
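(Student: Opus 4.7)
The plan is to follow the template of Lemma~\ref{lem:fortinDiv:normal} and Theorem~\ref{thm:fortinDiv:normal}: first establish \eqref{eq:fortinDiv:a} for the boundary operators $\PiFtilde^{\div,j}$, then use the edge correction to secure \eqref{eq:fortinDiv:b} while preserving the normal trace, and finally read off commutativity, boundedness, and idempotency as in the higher-order case. Since $p=0$, one has $P_c^{p+1}(\FF_T)=P_c^1(\FF_T)=\linhull\{\nu_F:F\in\FF_T\}$ by Lemma~\ref{lem:dualbasis}, so it suffices to test against the $\nu_F$. For $\PiFtilde^{\div,1}$ this is immediate from $\dual{\ppsi_F\cdot\normal_T}{\nu_{F'}}_{\partial T}=|F|\delta_{F,F'}$; for $\PiFtilde^{\div,2}$ from $\dual{\eeta_F\cdot\normal_T}{\nu_{F'}}_{\partial T}=\dual{\eta_F}{1}_F\delta_{F,F'}$. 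Since $\eeta_E\cdot\normal_T|_{\partial T}=0$ by Lemma~\ref{lem:rotatedBR}, the edge correction does not affect normal traces, so \eqref{eq:fortinDiv:a} transfers to $\PiF^{\div,j}$. Condition \eqref{eq:fortinDiv:b} follows from $\PP^0(T)=\linhull\{\ssigma_E:E\in\EE_*\}$ together with the orthogonality $\ip{\ssigma_E}{\eeta_{E'}}_T=\ip{\ssigma_E}{\eeta_E}_T\delta_{E,E'}$, by exactly the calculation in Theorem~\ref{thm:fortinDiv:normal}.

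For the $L^2$ bound on $\PiFtilde^{\div,1}$, I resolve $|\dual{\ttau\cdot\normal_T}{\nu_F}_{\partial T}|$ via \eqref{eq:dualityrelation} into $\lesssim h_T^{-1}|T|^{1/2}\norm{\ttau}T+|T|^{1/2}\norm{\div\ttau}T$, combined with $\norm{\ppsi_F}T\eqsim|T|^{1/2}$, $\dual{\ppsi_F\cdot\normal_T}{\nu_F}_{\partial T}=|F|$, and $|T|/|F|\eqsim h_T$. The subtle step, and the main obstacle, is the bound for $\PiFtilde^{\div,2}$: $\Hdivset{T}$ provides no trace inequality for $(1-\Pi_T^0)\ttau|_{\partial T}$ in the usual sense. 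The key observation is that $\nabla\nu_F\in\PP^0(T)$, hence $\ip{(1-\Pi_T^0)\ttau}{\nabla\nu_F}_T=0$, so \eqref{eq:dualityrelation} collapses to
\begin{align*}
\dual{(1-\Pi_T^0)\ttau\cdot\normal_T}{\nu_F}_{\partial T}=\ip{\div\ttau}{\nu_F}_T,
\end{align*}
bounded by $|T|^{1/2}\norm{\div\ttau}T$. Together with $\norm{\eeta_F}T\lesssim|T|^{1/2}$ and $\dual{\eeta_F\cdot\normal_T}{\nu_F}_{\partial T}\eqsim|F|$, this yields $\norm{\PiFtilde^{\div,2}\ttau}T\lesssim\norm{\ttau}T+h_T\norm{\div\ttau}T$. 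The edge correction contributes at most $\norm{(1-\PiFtilde^{\div,j})\ttau}T$ in $L^2$, as in Theorem~\ref{thm:fortinDiv:normal}. For the divergence bound I use the inverse estimates $\norm{\div\eeta_E}T\lesssim h_T^{-1}|T|^{1/2}$ and, in the $j=2$ case, $\norm{\div\eeta_F}T\lesssim h_T^{-1}|T|^{1/2}$, combined with $\ip{\ssigma_E}{\eeta_E}_T\eqsim|T|$.

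Commutativity and idempotency are then clean consequences. Since $\ppsi_F\in\RT^0(T)$ and $\eeta_E\in\PP^2(T)$, one has $\div\PiF^{\div,1}\ttau\in P^0(T)+P^1(T)=P^1(T)$, and \eqref{eq:fortinDiv:c} for $p=0$ forces $\div\PiF^{\div,1}\ttau=\Pi_T^1\div\ttau$. Idempotency of $\PiFtilde^{\div,1}$ on $\RT^0(T)$ follows from the expansion $\ttau=\sum_F(\ttau\cdot\normal_T|_F)\ppsi_F$ with face-constant coefficients and Lemma~\ref{lem:dualbasis}; idempotency of $\PiFtilde^{\div,2}$ on $\PP^0(T)$ is immediate from $(1-\Pi_T^0)\ttau=0$. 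In both cases $(1-\PiFtilde^{\div,j})\ttau=0$ forces the edge correction to vanish, so idempotency extends to $\PiF^{\div,j}$.
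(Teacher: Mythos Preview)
Your argument for \eqref{eq:fortinDiv:a}, \eqref{eq:fortinDiv:b}, the $L^2$ bounds, commutativity of $\PiF^{\div,1}$, and idempotency matches the paper's proof and is correct; in particular the observation $\nabla\nu_F\in\PP^0(T)$, reducing $\dual{(1-\Pi_T^0)\ttau\cdot\normal_T}{\nu_F}_{\partial T}$ to $\ip{\div\ttau}{\nu_F}_T$, is exactly the key point.

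There is, however, a genuine gap in the divergence bound for the edge correction. Your stated plan is to use $\norm{\div\eeta_E}T\lesssim h_T^{-1}|T|^{1/2}$ together with $\ip{\ssigma_E}{\eeta_E}_T\eqsim|T|$. But if you bound the coefficient by Cauchy--Schwarz, $|\ip{\ssigma_E}{(1-\PiFtilde^{\div,j})\ttau}_T|\lesssim |T|^{1/2}\norm{(1-\PiFtilde^{\div,j})\ttau}T\lesssim |T|^{1/2}(\norm{\ttau}T+h_T\norm{\div\ttau}T)$, the edge term contributes $h_T^{-1}\norm{\ttau}T+\norm{\div\ttau}T$ to the divergence norm, which is not the claimed estimate. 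For $j=1$ this is harmless because the commutativity $\div\PiF^{\div,1}\ttau=\Pi_T^1\div\ttau$ that you derive afterwards already gives $\norm{\div\PiF^{\div,1}\ttau}T\leq\norm{\div\ttau}T$; you should invoke that instead of inverse estimates. For $j=2$ commutativity is not available (in dimension $n\geq 3$ one has $\div\eeta_F\in P^{n-1}(T)\not\subset P^1(T)$), and your argument as written does not close.

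The paper handles $j=2$ differently: it first bounds $\norm{\div\PiFtilde^{\div,2}\ttau}T\lesssim\norm{\div\ttau}T$ directly (your inverse estimate on $\eeta_F$ works here because the coefficients are controlled by $\norm{\div\ttau}T$), and then uses \eqref{eq:fortinDiv:c} to note that $\div\eeta_E\in P^1(T)$ is orthogonal to $\div(1-\PiF^{\div,2})\ttau$, whence $\norm{\div(1-\PiF^{\div,2})\ttau}T\leq\norm{\div(1-\PiFtilde^{\div,2})\ttau}T$. An alternative fix within your approach is to observe that $\ssigma_E\in\PP^0(T)$ implies $\ip{\ssigma_E}{(1-\Pi_T^0)\ttau}_T=0$, so $\ip{\ssigma_E}{(1-\PiFtilde^{\div,2})\ttau}_T$ sees only the face-bubble part of $\PiFtilde^{\div,2}\ttau$ and is therefore bounded by $h_T|T|^{1/2}\norm{\div\ttau}T$; then the inverse estimate does give the right bound. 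Either route works, but one of them must be made explicit.
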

\begin{proof}
  Idempotency of the operators follows from their definitions. 
  Let $\PiF^{\div}\in\set{\PiF^{\div,j}}{j=1,2}$ and $\PiFtilde^{\div}\in\set{\PiFtilde^{\div,j}}{j=1,2}$.
  First, we check condition~\eqref{eq:fortinDiv:a}. It holds for $\PiFtilde^{\div}$ as can be seen with the same arguments as in Lemma~\ref{lem:fortinDiv:normal}. Since $\eeta_E\cdot\normal_T|_{\partial T} = 0$ by Lemma~\ref{lem:rotatedBR} we have $\PiF^{\div}\ttau\cdot\normal_T|_{\partial T} = \PiFtilde^{\div}\ttau\cdot\normal_T|_{\partial T}$. We conclude that~\eqref{eq:fortinDiv:a} follows.

  Second, condition~\eqref{eq:fortinDiv:b} can be seen as follows,
  \begin{align*}
    \ip{\ssigma_{E'}}{\PiF^{\div}\ttau}_T &= \ip{\ssigma_{E'}}{\PiFtilde^{\div}\ttau}_T +
    \sum_{E\in\EE_*} \frac{\ip{\ssigma_E}{(1-\PiFtilde^{\div})\ttau}_T}{\ip{\ssigma_E}{\eeta_E}_T} \ip{\ssigma_{E'}}{\eeta_E}_T
    \\
    &= \ip{\ssigma_{E'}}{\PiFtilde^{\div}\ttau}_T + \ip{\ssigma_{E'}}{(1-\PiFtilde^{\div})\ttau}_T = \ip{\ssigma_{E'}}{\ttau}_T
    \quad\forall E'\in\EE_*.
  \end{align*}
  Next we prove boundedness. Estimate $\norm{\PiFtilde^{\div,1}\ttau}T\lesssim \norm{\ttau}T + h_T\norm{\div\ttau}T$ follows as in Lemma~\ref{lem:fortinDiv:normal}. 
  For the second operator we stress that
  \begin{align*}
    |\dual{(1-\Pi_T^0)\ttau\cdot\normal_T}{\nu_F}_T| = |\ip{\div\ttau}{\nu_F}_T + \ip{(1-\Pi_T^0)\ttau}{\nabla \nu_F}_T|
    = |\ip{\div\ttau}{\nu_F}_T| \lesssim \norm{\div\ttau}T|T|^{1/2}.
  \end{align*}
  The second identity follows since $\nabla\nu_F\in\PP^0(T)$.
  This allows us to estimate
  \begin{align*}
    \norm{\PiFtilde^{\div,2}\ttau}T \lesssim \norm{\Pi_T^0\ttau}T + \sum_{F\in\FF_T} |F|^{-1}|T| \norm{\div\ttau}T \lesssim \norm{\ttau}T + h_T\norm{\div\ttau}T.
  \end{align*}
  Then, by the triangle inequality, the Cauchy--Schwarz inequality, norm estimates of basis functions and the previously established boundedness estimates, we see that
  \begin{align*}
    \norm{\PiF^{\div}\ttau}T &\leq \norm{\PiFtilde^{\div}\ttau}T + 
    \sum_{E\in\EE_*} \frac{|\ip{\ssigma_E}{(1-\PiFtilde^{\div})\ttau}_T|}{\ip{\ssigma_E}{\eeta_E}_T} \norm{\eeta_E}T
    \\
    &\lesssim \norm{\PiFtilde^{\div}\ttau}T + |T|^{1/2}\norm{(1-\PiFtilde^{\div})\ttau}T|T|^{-1}|T|^{1/2}
    \\
    &\lesssim \norm{\PiFtilde^{\div}\ttau}T + \norm{(1-\PiFtilde^{\div})\ttau}T  \lesssim \norm{\ttau}T + h_T\norm{\div\ttau}T.
  \end{align*}
  Furthermore, the same arguments and $\div\Pi_T^0\ttau=0$, $\norm{\div\eeta_F}T\eqsim h_T^{-1}|T|^{1/2}$ show that
  \begin{align*}
    \norm{\div\PiFtilde^{\div,2}\ttau}T \lesssim \norm{\div\ttau}T.
  \end{align*}
  Note that $\PiF^{\div,1}\ttau\in \PP^2(T)$, thus, $\div\PiF^{\div,1}\ttau\in P^1(T)$. Consequently,~\eqref{eq:fortinDiv:c} implies the commutativity property of $\PiF^{\div,1}$.
  Clearly, this also yields $\norm{\div\PiF^{\div,1}\ttau}T\leq \norm{\div\ttau}T$. It thus remains to prove that $\norm{\div\PiF^{\div,2}\ttau}T \lesssim \norm{\div\ttau}T$.
  To do so we argue as in the proof of Theorem~\ref{thm:fortinDiv} to derive $\norm{\div(1-\PiF^{\div,2})\ttau}T\leq \norm{\div(1-\PiFtilde^{\div,2})\ttau}T$. Together with the triangle inequality and $\norm{\div\PiFtilde^{\div,2}\ttau}T \lesssim \norm{\div\ttau}T$ we get that
  \begin{align*}
    \norm{\div\PiF^{\div,2}\ttau}T \leq \norm{\div\ttau}T + \norm{\div(1-\PiFtilde^{\div,2})\ttau}T \lesssim \norm{\div\ttau}T.
  \end{align*}
  This finishes the proof.
\end{proof}

\subsection{Alternative operator for lowest order and small parameter}\label{sec:fortinDiv:lo}
In this section we construct a simpler Fortin operator for $\alpha\lesssim h_T$ and the lowest-order case ($p=0$ in~\eqref{eq:fortinDiv}), based on $\PiF^{\div,2}$ from the previous section. 
Define the spaces
\begin{align*}
  \widetilde V_{h,\alpha}^{\div} &= \PP^0(T) + \linhull\set{\eeta_{\alpha,F}}{F\in\FF_T}, \\
  V_{h,\alpha}^{\div} &= \widetilde V_{h,\alpha}^{\div} + \linhull\set{\eeta_E}{E\in\EE_*}
\end{align*}
and operators $\PiFatilde^{\div}\colon \Hdivset{T}\to \widetilde V_{h,\alpha}^{\div}$, 
$\PiFa^{\div}\colon \Hdivset{T}\to V_{h,\alpha}^{\div}$ for all $\ttau\in \Hdivset{T}$ by
\begin{align*}
  \PiFatilde^{\div}\ttau &= \Pi_T^0\ttau + \sum_{F\in\FF_T} \frac{\dual{(1-\Pi_T^0)\ttau\cdot\normal_T}{\nu_F}_{\partial T}}{\dual{\eeta_{\alpha,F}\cdot\normal_T}{\nu_F}_{\partial T}} \eeta_{\alpha,F}, \\
  \PiFa^{\div}\ttau &= \PiFatilde^{\div}\ttau + \sum_{E\in\EE_*} \frac{\ip{\ssigma_E}{(1-\PiFatilde^{\div})\ttau}_T}{\ip{\ssigma_E}{\eeta_E}_T} \eeta_E. 
\end{align*}

\begin{theorem}\label{thm:fortinDiv:lo}
  Operator $\PiF^{\div} = \PiFa^{\div}$ satisfies~\eqref{eq:fortinDiv:a}--\eqref{eq:fortinDiv:b} for $p=0$. 
  If $\alpha\lesssim h_T$ then
  \begin{align*}
    \norm{\PiFa^{\div}\ttau}{T,\alpha} \lesssim  \norm{\ttau}{T,\alpha} \quad\forall \ttau\in\Hdivset{T}.
  \end{align*}
\end{theorem}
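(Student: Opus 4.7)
The plan is to first dispatch the two algebraic Fortin properties \eqref{eq:fortinDiv:a} and \eqref{eq:fortinDiv:b} as routine adaptations of the analogous arguments for $\PiF^{\div,2}$ in Theorem~\ref{thm:fortinDiv:normal:lo}, and then attack the parameter-robust boundedness \eqref{eq:fortinDiv:bound} by importing the Helmholtz-decomposition trick of Theorem~\ref{thm:fortinDiv}.

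For the algebraic properties: for $p=0$, Lemma~\ref{lem:dualbasis} reduces \eqref{eq:fortinDiv:a} to showing $\dual{(\ttau-\PiFa^{\div}\ttau)\cdot\normal_T}{\nu_F}_{\partial T}=0$ for each $F\in\FF_T$. Since $\eeta_{\alpha,F}\cdot\normal_T|_{\partial T}=\eeta_F\cdot\normal_T|_{\partial T}$ by Lemma~\ref{lem:modRB} and $\eeta_E\cdot\normal_T|_{\partial T}=0$ by Lemma~\ref{lem:rotatedBR}, the normal trace of $\PiFa^{\div}\ttau$ on $\partial T$ coincides with that produced by the unmodified operator from Section~\ref{sec:fortinDiv:lo:normal}, and the verification carries over verbatim. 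Property \eqref{eq:fortinDiv:b} is forced by the edge correction via the biorthogonality $\ip{\ssigma_E}{\eeta_{E'}}_T=\delta_{E,E'}\ip{\ssigma_E}{\eeta_E}_T$ (Lemma~\ref{lem:rotatedBR}), exactly as in Theorem~\ref{thm:fortinDiv:normal:lo}.

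The main obstacle is uniform boundedness. The naive estimate of the coefficients in $\PiFatilde^{\div}\ttau$, using that $\nabla\nu_F\in\PP^0(T)$ annihilates $(1-\Pi_T^0)\ttau$, gives only $|\dual{(1-\Pi_T^0)\ttau\cdot\normal_T}{\nu_F}_{\partial T}|=|\ip{\div\ttau}{\nu_F}_T|\lesssim |T|^{1/2}\norm{\div\ttau}T$, hence $\norm{\PiFatilde^{\div}\ttau}T\lesssim\norm{\ttau}T+h_T\norm{\div\ttau}T$, which is \emph{not} uniform in $\norm{\cdot}{T,\alpha}$. To circumvent this I will invoke the Helmholtz decomposition $\ttau=\nabla r+\ccurl\qq$ from Lemma~\ref{lem:helmholtz}. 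Because $\div\ccurl\qq=0$ and $\nabla\nu_F$ is constant, integration by parts yields
\begin{align*}
\dual{(1-\Pi_T^0)\ttau\cdot\normal_T}{\nu_F}_{\partial T}=\dual{(1-\Pi_T^0)\nabla r\cdot\normal_T}{\nu_F}_{\partial T},
\end{align*}
and the multiplicative trace inequality (Lemma~\ref{lem:traceineq}) combined with elliptic regularity $\norm{D^2 r}T\lesssim\norm{\Delta r}T=\norm{\div\ttau}T$ bounds the right-hand side by $|F|^{1/2}\norm{\nabla r}T^{1/2}\norm{\div\ttau}T^{1/2}$.

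Plugging this bound into the definition of $\PiFatilde^{\div}\ttau$, using $\dual{\eeta_{\alpha,F}\cdot\normal_T}{\nu_F}_{\partial T}\eqsim|F|$, the modified-bubble scalings $\norm{\eeta_{\alpha,F}}T\lesssim|T|^{1/2}(\alpha/h_T)^{1/2}$ and $\norm{\div\eeta_{\alpha,F}}T\lesssim|T|^{1/2}h_T^{-1}(\alpha/h_T)^{-1/2}$ from Lemma~\ref{lem:modRB}, and $|T|/|F|\eqsim h_T$, both $\norm{\PiFatilde^{\div}\ttau-\Pi_T^0\ttau}T$ and $\alpha\norm{\div\PiFatilde^{\div}\ttau}T$ collapse, face by face, to $\alpha^{1/2}\norm{\nabla r}T^{1/2}\norm{\div\ttau}T^{1/2}$, which Young's inequality bounds by $\norm{\nabla r}T+\alpha\norm{\div\ttau}T\lesssim\norm{\ttau}{T,\alpha}$. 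Together with $\norm{\Pi_T^0\ttau}{T,\alpha}=\norm{\Pi_T^0\ttau}T\leq\norm{\ttau}T$ this yields $\norm{\PiFatilde^{\div}\ttau}{T,\alpha}\lesssim\norm{\ttau}{T,\alpha}$. Finally, the edge-correction terms in $\PiFa^{\div}-\PiFatilde^{\div}$ contribute at most $(1+\alpha h_T^{-1})\norm{(1-\PiFatilde^{\div})\ttau}T\lesssim\norm{\ttau}{T,\alpha}$, again using $\alpha\lesssim h_T$ and $\norm{\eeta_E}T\eqsim|T|^{1/2}$, $\norm{\div\eeta_E}T\lesssim h_T^{-1}|T|^{1/2}$, which completes the proof.
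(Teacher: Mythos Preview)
Your proof is correct and follows essentially the same route as the paper: dispatch \eqref{eq:fortinDiv:a}--\eqref{eq:fortinDiv:b} by reduction to the unmodified case, then prove $\norm{\PiFatilde^{\div}\ttau}{T,\alpha}\lesssim\norm{\ttau}{T,\alpha}$ via the Helmholtz decomposition of Lemma~\ref{lem:helmholtz}, exploiting that $\nabla\nu_F\in\PP^0(T)$ and $\div\ccurl\qq=0$ kill the $\ccurl\qq$-contribution to the correction coefficients, and bounding the remaining $\nabla r$-part with the multiplicative trace inequality and elliptic regularity. The paper organizes this as $\PiFatilde^{\div}\ccurl\qq=\Pi_T^0\ccurl\qq$ and then treats $\PiFatilde^{\div}\nabla r$ separately, while you keep $\ttau$ intact and observe the correction coefficients equal those for $\nabla r$; these are the same computation.

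The one genuine, if minor, difference is in the divergence of the edge correction $\PiFa^{\div}-\PiFatilde^{\div}$. You bound it directly via the inverse estimate $\norm{\div\eeta_E}T\lesssim h_T^{-1}|T|^{1/2}$ and absorb the factor $\alpha h_T^{-1}\lesssim 1$. The paper instead invokes the orthogonality trick from Theorem~\ref{thm:fortinDiv}: since $\div\eeta_E\in P^1(T)$ and \eqref{eq:fortinDiv:c} holds, one gets $\norm{\div(1-\PiFa^{\div})\ttau}T\le\norm{\div(1-\PiFatilde^{\div})\ttau}T$ without any $\alpha/h_T$ factor. Your shortcut is perfectly valid here and slightly quicker; the paper's argument has the advantage of not depending on $\alpha\lesssim h_T$ at that step and of mirroring the higher-order proof, where the direct inverse-estimate route is unavailable.
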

\begin{proof}
  The verification of~\eqref{eq:fortinDiv:a}--\eqref{eq:fortinDiv:b} follows as in Theorem~\ref{thm:fortinDiv:normal}. 
  
  It remains to prove boundedness. First, we show boundedness of $\PiFatilde^{\div}$. Let $\ttau = \nabla r + \ccurl\qq\in \Hdivset{T}$ with $r,\qq$ as in Lemma~\ref{lem:helmholtz} be given. Using $\nabla\nu_F\in \PP^0(T)$, $\div\ccurl\qq = 0$ we obtain
  \begin{align*}
    \PiFatilde^{\div}\ccurl\qq &= \Pi_T^0\ccurl\qq + \sum_{F\in\FF_T}\frac{\ip{\div(1-\Pi_T^0)\ccurl\qq}{\nu_F}_T + \ip{(1-\Pi_T^0)\ccurl\qq}{\nabla \nu_F}_T}{\dual{\eeta_{\alpha,F}\cdot\normal_T}{\nu_F}_{\partial T}}\eeta_{\alpha,F}
    \\&= \Pi_T^0 \ccurl\qq.
  \end{align*}
  Then, $\norm{\PiFatilde^{\div}\ccurl\qq}{T,\alpha}= \norm{\Pi_T^0\ccurl\qq}{T} \leq \norm{\ccurl\qq}T \leq \norm{\ttau}{T,\alpha}$.
  With the multiplicative trace inequality (Lemma~\ref{lem:traceineq}), properties of the basis functions, Lemma~\ref{lem:modRB} and Lemma~\ref{lem:helmholtz} we get 
  \begin{align*}
    \frac{|\dual{(1-\Pi_T^0)\nabla r\cdot\normal_T}{\nu_F}_{\partial T}|}{\dual{\eeta_{\alpha,F}\cdot\normal_T}{\nu_F}_{\partial T}}\norm{\eeta_{\alpha,F}}T 
    &\lesssim |F|^{-1}|\partial T|^{1/2} \norm{(1-\Pi_T^0)\nabla r}{\partial T} \norm{\eeta_{\alpha,F}}T
    \\
    &\lesssim |\partial T|^{-1/2}\norm{(1-\Pi_T^0)\nabla r}T^{1/2}\norm{D^2r}T^{1/2}\alpha^{1/2}h_T^{-1/2}|T|^{1/2}
    \\
    &\lesssim \norm{\nabla r}{T}^{1/2}\alpha^{1/2}\norm{\div\ttau}T^{1/2} \lesssim \norm{\ttau}T + \alpha\norm{\div\ttau}T.
  \end{align*}
  The last estimates yield $\norm{\PiFatilde^{\div}\nabla r}T\lesssim \norm{\ttau}{T,\alpha}$. For the divergence contribution the same arguments prove
  \begin{align*}
    \alpha\frac{|\dual{(1-\Pi_T^0)\nabla r\cdot\normal_T}{\nu_F}_{\partial T}|}{\dual{\eeta_{\alpha,F}\cdot\normal_T}{\nu_F}_{\partial T}}\norm{\div\eeta_{\alpha,F}}T 
    &\lesssim |F|^{-1}|\partial T|^{1/2} \norm{(1-\Pi_T^0)\nabla r}{\partial T} h_T^{-1}|T|^{1/2}\alpha^{1/2}h_T^{1/2} \\
    &\lesssim \norm{\nabla r}{T}^{1/2}\alpha^{1/2}\norm{\div\ttau}T^{1/2} \lesssim \norm{\ttau}T + \alpha\norm{\div\ttau}T.
  \end{align*}
  We conclude that $\alpha\norm{\div\PiFatilde^{\div}\nabla r}T \lesssim \norm{\ttau}{T,\alpha}$. 
  Putting all estimates together we have shown that
  \begin{align*}
    \norm{\PiFatilde^{\div}\ttau}{T,\alpha} \leq \norm{\PiFatilde^{\div}\ccurl\qq}{T,\alpha} + \norm{\PiFatilde^{\div}\nabla r}{T,\alpha}
    \lesssim \norm{\ttau}{T,\alpha}. 
  \end{align*}
  Arguing as in the proof of Theorem~\ref{thm:fortinDiv:normal:lo} we find that 
  \begin{align*}
    \norm{\PiFa^{\div}\ttau}T \lesssim \norm{\PiFatilde^{\div}\ttau}T + \norm{(1-\PiFatilde^{\div})\ttau}T
  \end{align*}
  giving us $\norm{\PiFa^{\div}\ttau}T \lesssim \norm{\ttau}{T,\alpha}$.
  Finally, arguing as in the proof of Theorem~\ref{thm:fortinDiv} we find that 
  \begin{align*}
    \norm{\div(1-\PiFa^{\div})\ttau}T \leq \norm{\div(1-\PiFatilde^{\div})\ttau}{T}.
  \end{align*}
  Together with the boundedness of $\PiFatilde^{\div}$ we conclude that $\alpha\norm{\div\PiFa^{\div}\ttau}T \lesssim \norm{\ttau}{T,\alpha}$ which finishes the proof.
\end{proof}

\subsection{Comparison with existing Fortin operators}
Fortin operators that satisfy~\eqref{eq:fortinDiv} are constructed in~\cite{practicalDPG,breakSpace,DemkowiczZanotti20}. 
In~\cite[Lemma~3.3]{practicalDPG}, it is shown that there exists a Fortin operator, mapping into the discrete test space $\PP^{p+2}(T)$ and in~\cite{DemkowiczZanotti20}, the authors impose the minimal condition $\RT^{p+1}(T)\subset V_h^{\div}$ to ensure the existence of a Fortin operator satisfying~\eqref{eq:fortinDiv}. 
Here, $\RT^{p+1}(T) = \PP^{p+1}(T)+\xx P^{p+1}(T)$ denotes the Raviart--Thomas space.
We stress that all these mentioned operators are uniformly bounded only if $h_T\lesssim \alpha$.

Computing dimensions we get
\begin{align*}
  \dim(\PP^{p+2}(T)) &= n \frac{\prod_{j=1}^n(j+p+2)}{n!}, \\
  \dim(\RT^{p+1}(T)) &= n\frac{\prod_{j=1}^n(j+p)}{n!} + (n+1)\frac{\prod_{j=1}^{n-1}(j+p+1)}{(n-1)!}.
\end{align*}
To compute the dimension of $V_{hp}^{\div}$ we note that $\dim(P_c^{p+1}(\FF_T)) = \dim(P^{p+1}(T))-\dim(P_b^{p+1}(T))$ where we recall that $P_b^p(T)$ denotes the space of element bubbles. 
This yields
\begin{align*}
  \dim(V_{hp}^{\div}) &= \dim(P_c^{p+1}(\FF_T))+\dim(\PP^p(T)) \\
  &= \frac{\prod_{j=1}^n (j+p+1)}{n!} - \frac{\prod_{j=1}^n(j+p-n)}{n!} + n\frac{\prod_{j=1}^n(j+p)}{n!}.
\end{align*}
For the lowest-order case $p=0$ we thus get
\begin{align*}
  \dim(\PP^2(T)) = \begin{cases}
    12 & n=2, \\
    30 & n=3,
  \end{cases}\quad
  \dim(\RT^1(T)) = \begin{cases}
    8 & n=2, \\
    15 & n=3,
  \end{cases}
\end{align*}
and
\begin{align*}
  \dim(V_{h0}^{\div}) = \begin{cases}
    5 & n=2, \\
    7 & n=3.
  \end{cases}
\end{align*}
As in Section~\ref{sec:fortin:existing} we conclude that our test spaces are systematically smaller than previously used ones, and guarantee robustness, contrary to the previous cases. 

\section{Numerical experiment}\label{sec:num}
In this section we consider the reaction-diffusion problem
\begin{align*}
  -\varepsilon^2\Delta u + u = f \quad\text{in } \Omega, \quad u|_{\partial\Omega} = 0. 
\end{align*}
First, we give a brief overview of a DPG method for the latter problem and, then, discuss results of our numerical experiment. 

\subsection{DPG method for reaction-diffusion problem}\label{sec:DPG}
We introduce the trace operators 
\begin{align*}
  \trgrad\colon H^1(\Omega) \to (\Hdivset{\TT})', \quad
  \trdiv\colon \Hdivset\Omega \to (H^1(\TT))',
\end{align*}
defined for $u\in H^1(\Omega)$, $\ssigma\in\Hdivset\Omega$ by 
\begin{alignat*}{2}
  \dual{\trgrad u}{\ttau}_{\partial\TT} &\,=\,& \ip{u}{\div_\TT\ttau}_\Omega + \ip{\nabla u}{\ttau}_\Omega &\quad\forall \ttau\in \Hdivset\TT = \prod_{T\in\TT}\Hdivset{T}, \\
  \dual{\trdiv\ssigma}v_{\partial\TT} &\,=\,& \ip{\ssigma}{\nabla_\TT v}_\Omega + \ip{\div\ssigma}{v}_\Omega &\quad\forall v\in H^1(\TT)=\prod_{T\in\TT} H^1(T).
\end{alignat*}
Here, $\div_\TT\colon \Hdivset{\TT}\to L^2(\Omega)$ is given by $\div_\TT\ttau|_T = \div(\ttau|_T)$ for $T\in\TT$, $\ttau=(\ttau_T)_{T\in\TT}\in\Hdivset\TT$ and $\nabla_\TT\colon H^1(\TT)\to\LL^2(\Omega)$ is defined similarly.
The trace spaces $H_{00}^{1/2}(\partial\TT) = \trgrad(H_0^1(\Omega))$, $H^{-1/2}(\partial\TT) = \trdiv(\Hdivset\Omega)$ are closed with respect to the canonical norms (see~\cite{breakSpace})
\begin{align*}
  \norm{\widehat u}{1/2,\varepsilon} &= \inf\set{\norm{u}{\Omega,\varepsilon}}{u\in H^1(\Omega), \,\trgrad  u  =\widehat u}, \\
  \norm{\widehat \sigma}{-1/2,\varepsilon} &= \inf\set{\norm{\ssigma}{\Omega,\varepsilon}}{\ssigma\in\Hdivset\Omega, \,\trdiv  \ssigma  =\widehat\sigma}.
\end{align*}
Introducing the spaces
\begin{align*}
  U &= L^2(\Omega)\times \LL^2(\Omega)\times H_{00}^{1/2}(\partial\TT)\times H^{-1/2}(\partial\TT), \\
  V &= H^1(\TT)\times \Hdivset\TT, 
\end{align*}
where $U$ is equipped with the canonical product norm and $V$ with the (squared) norm 
\begin{align*}
  \norm{(v,\ttau)}V^2 = \norm{v}{\TT,\varepsilon}^2 + \norm{\ttau}{\TT,\varepsilon}^2 :=
  \sum_{T\in\TT} \norm{v}{T,\varepsilon}^2 + \norm{\ttau}{T,\varepsilon}^2
\end{align*}
we obtain the ultraweak formulation of the reaction-diffusion problem by defining $\ssigma = \varepsilon\nabla u$ and element-wise integration by parts. This yields
\begin{align}\label{eq:DPG}
  \uu=(u,\ssigma,\widehat u,\widehat\sigma)\in U\colon 
  \qquad b(\uu,\vv) = L(\vv) \quad\forall \vv=(v,\ttau)\in V, 
\end{align}
where for $\uu\in U$, $\vv\in V$ and given $f\in L^2(\Omega)$ we define
\begin{align*}
  b(\uu,\vv) &= \ip{u}{\varepsilon\div_\TT\ttau+v}_\Omega + \ip{\ssigma}{\varepsilon\nabla_\TT v+ \ttau}_\Omega - \varepsilon\dual{\widehat u}{\ttau}_{\partial \TT} -\varepsilon\dual{\widehat\sigma}{v}_{\partial \TT}, \\
  L(\vv) &= \ip{f}{v}_\Omega.
\end{align*}
The following result contains well-posedness of the ultraweak formulation~\eqref{eq:DPG}. 
It can be derived by following the abstract theory presented in~\cite{breakSpace} together with our discussions on the fully-discrete scheme~\eqref{eq:DPG:practical} from the introduction.
\begin{proposition}\label{prop:dpg}
  The ultraweak formulation~\eqref{eq:DPG} admits a unique solution $\uu\in U$ with $\norm{\uu}U\lesssim \norm{f}\Omega$.
  Let $U_h\subset U$, $V_h\subset V$ denote finite-dimensional subspaces and suppose that there exists a Fortin operator $\PiF\colon V\to V_h$ satisfying~\eqref{eq:fortin:abstract}. 
  Then, with $\uu_h\in U$ denoting the solution of~\eqref{eq:DPG:practical},
  \begin{align*}
    \norm{\uu-\uu_h}U \lesssim \min_{\vv\in U_h} \norm{\uu-\vv_h}{U}.
  \end{align*}
  The hidden constants are independent of $\varepsilon$ and the mesh-size.
\end{proposition}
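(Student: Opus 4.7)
The plan is to reduce the claim to the abstract framework already assembled in the introduction and in~\cite{breakSpace}, carrying out the two pieces (continuous well-posedness, discrete quasi-optimality) separately and then tracking the dependence of all constants on $\varepsilon$.

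First, for continuous well-posedness, I would verify that $b(\cdot,\cdot)\colon U\times V\to\R$ is bounded, i.e.\ $|b(\uu,\vv)|\lesssim \norm{\uu}{U}\norm{\vv}{V}$, uniformly in $\varepsilon$. This is a direct Cauchy--Schwarz calculation once the factors $\varepsilon$ in $b$ are absorbed into the $\varepsilon$-weighted test norms $\norm{\cdot}{\TT,\varepsilon}$ on $H^1(\TT)$ and $\Hdivset{\TT}$ and into the trace norms $\norm{\cdot}{\pm 1/2,\varepsilon}$ (the latter are defined exactly as the infima needed so that the duality pairings on $\partial\TT$ are bounded). The key step is the $\varepsilon$-robust inf-sup condition on $b$. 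The standard DPG route is: for each $\uu\in U$ solve the adjoint (dual) problem by exhibiting $\vv=(v,\ttau)\in V$ such that $b(\uu,\vv)\gtrsim \norm{\uu}{U}^2$ and $\norm{\vv}{V}\lesssim \norm{\uu}{U}$. Following~\cite{breakSpace}, this reduces to solving a single reaction-diffusion problem $-\varepsilon^2\Delta z+z=\ldots$ in $\Omega$ with mixed data matching $(u,\ssigma,\widehat u,\widehat\sigma)$; its solvability and $\varepsilon$-robust a priori bound in the $\varepsilon$-weighted norms on $H^1(\Omega)$ and $\Hdivset{\Omega}$ give precisely what is needed, and the jumps pick up the trace contributions through the $\varepsilon$-scaled definition of $\norm{\cdot}{\pm 1/2,\varepsilon}$. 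Combining boundedness and inf-sup yields a boundedly invertible $B\colon U\to V'$ with $\norm{B^{-1}}\lesssim 1$, hence existence, uniqueness, and the bound $\norm{\uu}{U}\lesssim \norm{f}{\Omega}$.

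Second, for the discrete estimate I would invoke the general DPG quasi-optimality recalled in the introduction around~\eqref{eq:fortin:abstract}: whenever a Fortin operator $\PiF\colon V\to V_h$ with $\norm{\PiF v}{V}\le C_{\PiF}\norm{v}{V}$ and $b(u_h,v-\PiF v)=0$ for all $u_h\in U_h$ exists, the practical scheme~\eqref{eq:DPG:practical} satisfies
\[
  \norm{\uu-\uu_h}{U}\le \frac{C_b C_{\PiF}}{c_b}\min_{\vv_h\in U_h}\norm{\uu-\vv_h}{U},
\]
with $C_b$ the continuity constant and $c_b$ the inf-sup constant of $b(\cdot,\cdot)$ established in the first part. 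Since both $C_b$ and $c_b$ are independent of $\varepsilon$ and of the mesh-size by the previous step, and the hypothesis supplies $C_{\PiF}$ independent of $\varepsilon$ and $h$, the stated $\varepsilon$- and mesh-independent quasi-optimality follows.

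The main obstacle is the continuous $\varepsilon$-robust inf-sup step. The delicate point is to control the $\widehat u$ and $\widehat\sigma$ components, because the natural minimal-extension norms involve both the volume term and the $\varepsilon$-weighted derivative term; the extension argument must use the reaction-diffusion operator itself so that the resulting bound is uniform as $\varepsilon\to 0$. Once this adjoint problem is solved with the right $\varepsilon$-scaling (as in~\cite{breakSpace}), the rest is bookkeeping, and the quasi-optimality follows verbatim from the abstract DPG machinery combined with the Fortin operators constructed in Sections~\ref{sec:fortin} and~\ref{sec:fortinDiv}.
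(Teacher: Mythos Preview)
Your proposal is correct and follows essentially the same approach as the paper: the paper does not give a detailed argument but simply states that the result can be derived from the abstract theory in~\cite{breakSpace} together with the Fortin-based quasi-optimality discussion around~\eqref{eq:fortin:abstract} in the introduction, which is precisely the two-step reduction (continuous $\varepsilon$-robust inf-sup plus discrete Fortin quasi-optimality) you outline.
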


\subsection{Results for reaction-diffusion problem}
In this section we consider the manufactured solution
\begin{align*}
  u(x,y) = v(x)v(y) \quad\text{for } (x,y)\in \Omega = (0,1)^2,
\end{align*}
where 
\begin{align*}
  v(x) = 1-(1-e^{-1/(\sqrt{2}\varepsilon)})\frac{e^{-(1-x)/(\sqrt2\varepsilon)}+e^{-x/(\sqrt2\varepsilon)}}{1-e^{-2/(\sqrt{2}\varepsilon)}}.
\end{align*}
One verifies that $u$ is the solution of 
\begin{align*}
  -\varepsilon^2 \Delta u + u = f, \quad u|_{\partial \Omega} = 0 \quad\text{with } f(x,y) = \tfrac12(v(x)+v(y)).
\end{align*}

\begin{figure}
  \begin{center}
    \begin{tikzpicture}
\begin{loglogaxis}[
    title={$V_{h,\mathrm{pol}},\,\varepsilon=10^{-3}$},
width=0.49\textwidth,
cycle list/Dark2-6,
cycle multiindex* list={
mark list*\nextlist
Dark2-6\nextlist},
every axis plot/.append style={ultra thick},
xlabel={degrees of freedom},
ymin=1e-3, ymax=1,
grid=major,
legend entries={\tiny $\estDPG$,\tiny $\|u-u_h\|$,\tiny $\|\ssigma-\ssigma_h\|$},
legend pos=south west,
]
\addplot table [x=dof,y=est] {data/Vhpol1.dat};
\addplot table [x=dof,y=errUL2] {data/Vhpol1.dat};
\addplot table [x=dof,y=errSigma] {data/Vhpol1.dat};

\end{loglogaxis}
\end{tikzpicture}
\begin{tikzpicture}
\begin{loglogaxis}[
    title={$V_{h,\varepsilon},\,\varepsilon=10^{-3}$},
width=0.49\textwidth,
cycle list/Dark2-6,
cycle multiindex* list={
mark list*\nextlist
Dark2-6\nextlist},
every axis plot/.append style={ultra thick},
xlabel={degrees of freedom},
ymin=1e-3, ymax=1,
grid=major,
legend entries={\tiny $\estDPG$,\tiny $\|u-u_h\|$,\tiny $\|\ssigma-\ssigma_h\|$},
legend pos=south west,
]
\addplot table [x=dof,y=est] {data/Vhmod1.dat};
\addplot table [x=dof,y=errUL2] {data/Vhmod1.dat};
\addplot table [x=dof,y=errSigma] {data/Vhmod1.dat};

\end{loglogaxis}
\end{tikzpicture}
\begin{tikzpicture}
\begin{loglogaxis}[
    title={$V_{h,\mathrm{pol}},\,\varepsilon=10^{-4}$},
width=0.49\textwidth,
cycle list/Dark2-6,
cycle multiindex* list={
mark list*\nextlist
Dark2-6\nextlist},
every axis plot/.append style={ultra thick},
xlabel={degrees of freedom},
ymin=8e-4, ymax=1e-1,
grid=major,
legend entries={\tiny $\estDPG$,\tiny $\|u-u_h\|$,\tiny $\|\ssigma-\ssigma_h\|$},
legend pos=south east,
]
\addplot table [x=dof,y=est] {data/Vhpol2.dat};
\addplot table [x=dof,y=errUL2] {data/Vhpol2.dat};
\addplot table [x=dof,y=errSigma] {data/Vhpol2.dat};

\end{loglogaxis}
\end{tikzpicture}
\begin{tikzpicture}
\begin{loglogaxis}[
    title={$V_{h,\varepsilon},\,\varepsilon=10^{-4}$},
width=0.49\textwidth,
cycle list/Dark2-6,
cycle multiindex* list={
mark list*\nextlist
Dark2-6\nextlist},
every axis plot/.append style={ultra thick},
xlabel={degrees of freedom},
ymin=8e-4, ymax=1e-1,
grid=major,
legend entries={\tiny $\estDPG$,\tiny $\|u-u_h\|$,\tiny $\|\ssigma-\ssigma_h\|$},
legend pos=south west,
]
\addplot table [x=dof,y=est] {data/Vhmod2.dat};
\addplot table [x=dof,y=errUL2] {data/Vhmod2.dat};
\addplot table [x=dof,y=errSigma] {data/Vhmod2.dat};

\end{loglogaxis}
\end{tikzpicture}
  \end{center}
  \caption{Errors in the field variables compared with DPG estimator for $\varepsilon=10^{-3}$ and $\varepsilon=10^{-4}$. The left resp. right column shows the results using test space $V_{h,\mathrm{pol}}$ resp. $V_{h,\varepsilon}$.}
  \label{fig:comp}
\end{figure}
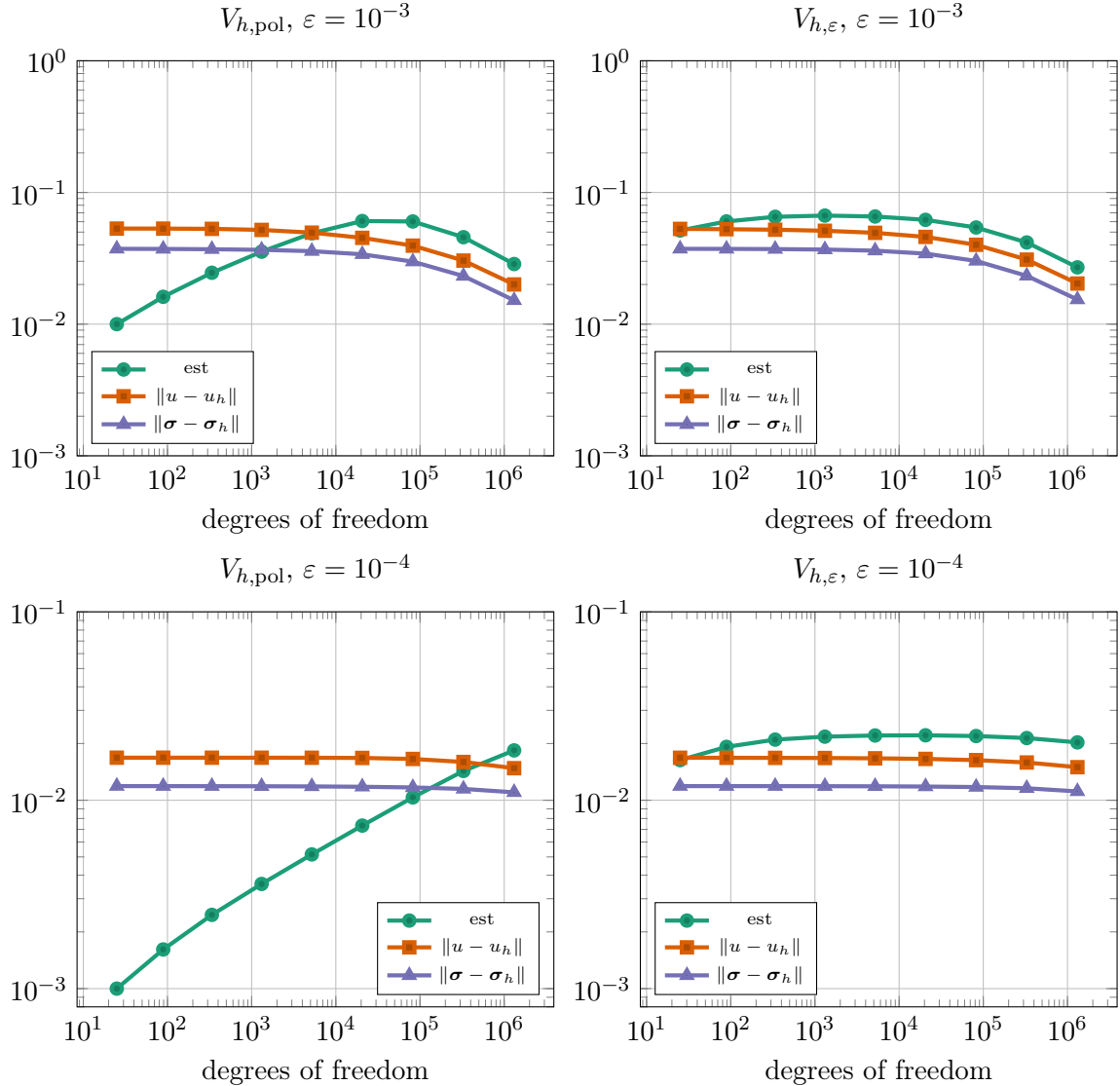

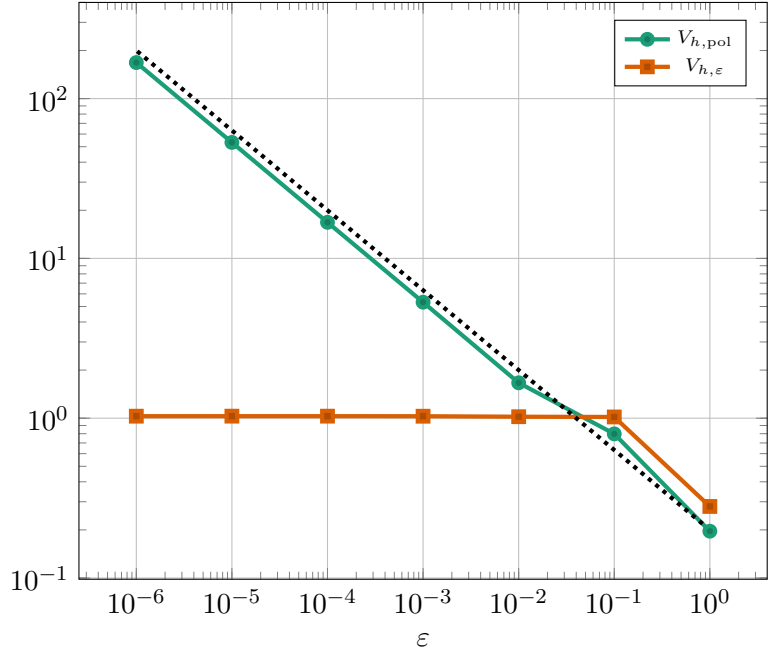
\begin{figure}
  \begin{center}
    \begin{tikzpicture}
\begin{loglogaxis}[
width=0.65\textwidth,
cycle list/Dark2-6,
cycle multiindex* list={
mark list*\nextlist
Dark2-6\nextlist},
every axis plot/.append style={ultra thick},
xlabel={$\varepsilon$},
grid=major,
legend entries={\tiny $V_{h,\mathrm{pol}}$,\tiny $V_{h,\varepsilon}$},
legend pos=north east,
]
\addplot table [x=t,y=ratpol] {data/ratios.dat};
\addplot table [x=t,y=ratmod] {data/ratios.dat};
\addplot [black,dotted,mark=none] table [x=t,y expr={0.2*sqrt(\thisrowno{0})^(-1)}] {data/ratios.dat};

\end{loglogaxis}
\end{tikzpicture}
  \end{center}
  \caption{Ratio $\rho = \norm{u-u_h}{}/\est$ for the two different test spaces $V_{h,\mathrm{pol}}$ and $V_{h,\varepsilon}$ on a fixed mesh. The black dotted line corresponds to $\OO(\varepsilon^{-1/2})$.}
  \label{fig:ratios}
\end{figure}

We use the DPG method from Section~\ref{sec:DPG} with test spaces
\begin{align*}
  V_{h,\mathrm{pol}} = \prod_{T\in\TT} V_{h,\mathrm{pol}}(T), \quad V_{h,\varepsilon} = \prod_{T\in\TT} V_{h,\varepsilon}(T)
\end{align*}
where 
\begin{align*}
  V_{h,\mathrm{pol}}(T) = P^3(T)\times \PP^2(T), \quad 
  V_{h,\varepsilon}(T) = \begin{cases}
    V_{h0}^\nabla \times V_{h}^{\div,2}, & \varepsilon>h_T, \\
    V_{h0,\varepsilon}^\nabla \times V_{h,\varepsilon}^{\div}, & \varepsilon\leq h_T.
  \end{cases}
\end{align*}
The trial space is
\begin{align*}
  U_h = P^0(\TT)\times \PP^0(\TT) \times \trgrad(P^1(\TT)\cap H_0^1(\Omega)) \times \trdiv(\RT^0(\TT)),
\end{align*}
where $\RT^0(\TT) = \set{\ttau\in\LL^2(\Omega)}{\ttau|_T\in \RT^0(T), \, T\in\TT}\cap \Hdivset\Omega$.
We stress that by our constructions from Sections~\ref{sec:fortin} and~\ref{sec:fortinDiv}, $V_h = V_{h,\varepsilon}$ is a test space that allows for a uniformly bounded Fortin operator~\eqref{eq:fortin:abstract}.
On the other hand, test space $V_h = V_{h,\mathrm{pol}}$ allows for a Fortin operator whose norm depends on $\varepsilon$, cf.~\cite{practicalDPG}. 

We also define the DPG error estimator by
\begin{align*}
  \est = \sup_{0\neq \vv_h\in V_h} \frac{b(\uu_h,\vv_h)-L(\vv_h)}{\norm{\vv_h}V}.
\end{align*}
Clearly, this estimator depends on the choice of the test space. 
In~\cite{DPGaposteriori} it is shown that $\est$ is, up to an oscillation term, equivalent to the error 
\begin{align*}
  \norm{\uu-\uu_h}U \eqsim \est + \osc(f) = \est + \sup_{0\neq \vv=(v,\ttau)\in V} \frac{L(\vv-\PiF\vv)}{\norm{\vv}V},
\end{align*}
provided there exists a uniformly bounded Fortin operator $\PiF\colon V\to V_h$.

Figure~\ref{fig:comp} shows the errors of the field variables for $\varepsilon\in \{10^{-3},10^{-4}\}$ and estimator $\est$. 
We observe differences when using $V_h = V_{h,\mathrm{pol}}$ or $V_h=V_{h,\varepsilon}$ as test space:
For coarse meshes and $V_h = V_{h,\mathrm{pol}}$ the estimator $\est$ underestimates the errors in the field variables. This effect is more severe for smaller parameters. 
This can also be seen in Figure~\ref{fig:ratios}. There, we fix a mesh with four elements and only vary $\varepsilon$. We plot the index $\rho = \|u-u_h\|/\est$. When using $V_h = V_{h,\mathrm{pol}}$ we observe that $\rho=\OO(\varepsilon^{-1/2})$ for $\varepsilon\to 0$ whereas $\rho=\OO(1)$ when using $V_h = V_{h,\varepsilon}$. 
We conclude that the DPG method with $V_h = V_{h,\mathrm{pol}}$ is not robust, whereas with the new test space $V_h = V_{h,\varepsilon}$ it is.

\subsection{Discrete stability}\label{sec:stability}
Let $\Omega = \Tref$, $\TT = \{\Tref\}$. In this section we want to study stability of the method by investigating the norm equivalence constants $\lambda_\mathrm{min}$, $\lambda_\mathrm{max}$ in
\begin{align}\label{eq:normequiv}
  \lambda_\mathrm{min} \norm{\uu_h}U^2 \leq b(\uu_h,\Theta_h\uu_h) \leq \lambda_\mathrm{max} \norm{\uu_h}U^2 \quad\forall \uu_h\in U_h.
\end{align}
Here, $U_h$ is defined as in the previous section. We use two different test spaces, $V_{h,\varepsilon}$ (defined in the previous section) and 
\begin{align*}
  \widetilde V_h = V_{h0}^{\nabla}\times V_{h0}^{\div,2}.
\end{align*}
The difficulty in checking the norm equivalence is the implementation of the trace norms. Note that due to inclusion of boundary conditions and the fact that all nodes of $\TT$ are on the boundary, we do not have to consider $\norm{\widehat u_h}{1/2,\varepsilon}$.
To calculate $\norm{\widehat\sigma_h}{-1/2,\varepsilon}$ we generate a submesh $\widetilde\TT$ of $\TT$ such that all elements that have a boundary face have diameter less than or equal to $\varepsilon/2$. This, heuristically, resolves possible boundary layers. To evaluate $\norm{\widehat \sigma_h}{-1/2,\varepsilon}$ we approximate the PDE
\begin{align*}
  \ttau\in\Hdivset{\Omega}\colon \qquad -\varepsilon^2\nabla\div\ttau +\ttau = 0, \quad \ttau\cdot\normal_{\Omega}|_{\partial\Omega} = \widehat\sigma_h
\end{align*}
by a standard FEM on $\widetilde\TT$ using lowest-order Raviart--Thomas elements.
The $\Hdivset\Omega$ norm $\norm{\ttau_h}{\Omega,\varepsilon}$ of the approximation $\ttau_h\in\RT^0(\TT)$ is taken as $\norm{\widehat \sigma_h}{-1/2,\varepsilon}$.

\begin{figure}
  \begin{center}
    \begin{tikzpicture}
\begin{loglogaxis}[
width=0.65\textwidth,
cycle list/Dark2-6,
cycle multiindex* list={
mark list*\nextlist
Dark2-6\nextlist},
every axis plot/.append style={ultra thick},
xlabel={$\varepsilon$},
grid=major,
legend entries={\tiny {$\lambda_\mathrm{max}$, $\widetilde V_{h}$},\tiny {$\lambda_\mathrm{min}$, $\widetilde V_{h}$},\tiny {$\lambda_\mathrm{max}$, $V_{h,\varepsilon}$},\tiny {$\lambda_\mathrm{min}$, $V_{h,\varepsilon}$}},
legend pos=south east,
]
\addplot table [x=t,y=ellmax] {data/eigval.dat};
\addplot table [x=t,y=ellmin] {data/eigval.dat};
\addplot table [x=t,y=ellmaxMB] {data/eigval.dat};
\addplot table [x=t,y=ellminMB] {data/eigval.dat};
\addplot [black,dotted,mark=none] table [x=t,y expr={4*(\thisrowno{0})^(1)}] {data/eigval.dat};

\end{loglogaxis}
\end{tikzpicture}
  \end{center}
  \caption{Constants $\lambda_\mathrm{max}$ and $\lambda_\mathrm{min}$ from~\eqref{eq:normequiv} for the test spaces $\widetilde V_h$ and $V_{h,\varepsilon}$ (see Section~\ref{sec:stability}). The black dotted line corresponds to $\OO(\varepsilon^{-1})$.}
  \label{fig:normequiv}
\end{figure}
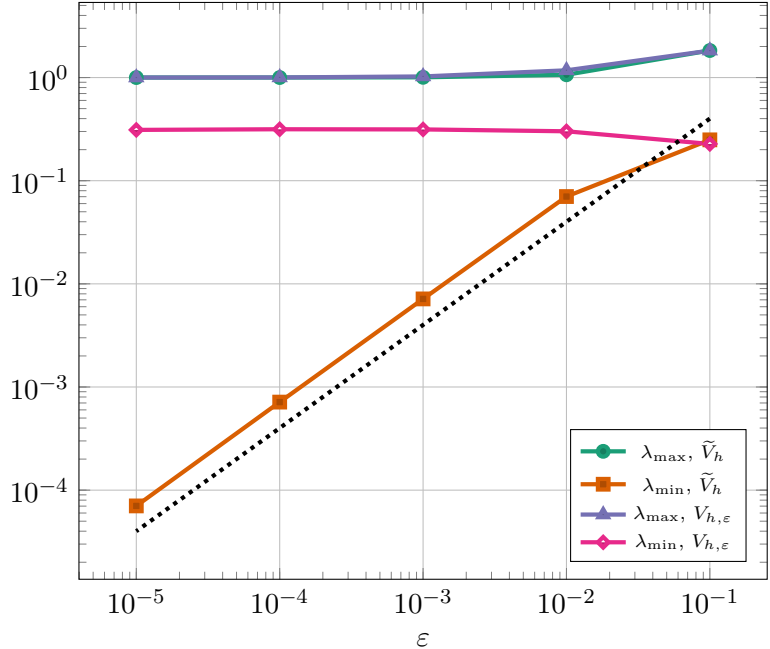

In view of norm equivalence~\eqref{eq:normequiv} we stress that $\lambda_\mathrm{max}\lesssim 1$ independent of $\varepsilon$ and test space $V_h$ since the bilinear form is uniformly bounded. However, the lower bound is directly related to the stability of the DPG method, i.e., $\lambda_\mathrm{min}$ depends on the discrete $\inf$-$\sup$ constant which for the DPG method is related to the norm of Fortin operators as we already discussed in the introduction.
Figure~\ref{fig:normequiv} visualizes $\lambda_\mathrm{min}$ and $\lambda_\mathrm{max}$ for $V_h = V_{h,\varepsilon}$ and $V_h = \widetilde V_h$.
We observe that $\lambda_\mathrm{max}$ is uniformly bounded for both test spaces (for small $\varepsilon$ we can not even distinguish them in the plot), whereas $\lambda_\mathrm{min}$ deteriorates for $V_h = \widetilde V_h$ and is essentially constant for $V_h=V_{h,\varepsilon}$.
This illustrates that the DPG method with test space $V_{h,\varepsilon}$ is uniformly stable, in contrast to the canonical method with test space $\widetilde V_h$.

\bibliographystyle{abbrv}
\bibliography{literature}

\end{document}